
\documentclass{article}

\usepackage{microtype}
\usepackage{graphicx}
\usepackage{subfigure}
\usepackage{booktabs} 
\usepackage{colortbl}
\usepackage{bbding}
\usepackage{graphicx}
\usepackage{epstopdf}
\newcommand*{\belowrulesepcolor}[1]{%
	\noalign{%
		\kern-\belowrulesep
		\begingroup
		\color{#1}%
		\hrule height\belowrulesep
		\endgroup
	}%
}
\newcommand*{\aboverulesepcolor}[1]{%
	\noalign{%
		\begingroup
		\color{#1}%
		\hrule height\aboverulesep
		\endgroup
		\kern-\aboverulesep
	}%
}
\usepackage{hyperref}
\usepackage{tcolorbox}
\usepackage{amsmath}
\usepackage{amsthm}
\usepackage{amsmath, bm}
\usepackage{amsfonts}
\usepackage{multirow}
\usepackage{cleveref}
\usepackage{tabularx}
\usepackage{setspace}
\usepackage[flushleft]{threeparttable}
\usepackage{multicol}

\newtheorem{theorem}{Theorem}[section]
\newtheorem{corollary}{Corollary}[section]
\newtheorem{lemma}{Lemma}[section]

\newtheorem{definition}{Definition}[section]

\newtheorem{remark}{Remark}[section]

\newtheorem{assumption}{Assumption}[section]


\newcommand{\argmin}{\text{argmin}}
\newcommand{\mK}{\mathcal{K}}
\newcommand{\mKb}{\mathcal{K}-1}
\newcommand{\mE}{\mathcal{E}}
\newcommand{\M}{\mathcal{M}}
\newcommand{\D}{\operatorname{D}}
\newcommand{\retr}{\operatorname{Retr}}
\newcommand{\Exp}{\operatorname{Exp}}

\newcommand{\grad}{\operatorname{grad}}
\newcommand{\Hess}{\operatorname{Hess}}
\newcommand{\Uniform}{\operatorname{Uni}}
\newcommand{\T}{\operatorname{T}}
\newcommand{\norm}[1]{\|{#1}\|}
\newcommand{\inner}[2]{\langle{#1},{#2}\rangle}
\newcommand{\TSS}{\textbf{TSS}}
\newcommand{\TSSA}{\textbf{TSSA}}
\newcommand{\abs}[1]{\ensuremath{\left|#1\right|}}

\usepackage[accepted]{icml2024}


\icmltitlerunning{Riemannian Accelerated Zeroth-order Algorithm: \\ Improved Robustness and Lower Query Complexity}

\begin{document}

\twocolumn[
\icmltitle{Riemannian Accelerated Zeroth-order Algorithm: \\ Improved Robustness and Lower Query Complexity}




\begin{icmlauthorlist}
\icmlauthor{Chang He}{sufe}
\icmlauthor{Zhaoye Pan}{sufe}
\icmlauthor{Xiao Wang}{sufe}
\icmlauthor{Bo Jiang}{sufe}
\end{icmlauthorlist}

\icmlaffiliation{sufe}{School of Information Management and Engineering, Shanghai University of Finance and Economics, China}

\icmlcorrespondingauthor{Chang He}{ischanghe@gmail.com}

\icmlkeywords{Machine Learning, ICML}

\vskip 0.3in
]



\printAffiliationsAndNotice{}  

\begin{abstract}
Optimization problems with access to only zeroth-order information of the objective function on Riemannian manifolds arise in various applications, spanning from statistical learning to robot learning. While various zeroth-order algorithms have been proposed in Euclidean space, they are not inherently designed to handle the challenging constraints imposed by Riemannian manifolds. The proper adaptation of zeroth-order techniques to Riemannian manifolds remained unknown until the pioneering work of \cite{li2023stochastic}. However, zeroth-order algorithms are widely observed to converge slowly and be unstable in practice. To alleviate these issues, we propose a Riemannian accelerated zeroth-order algorithm with improved robustness. Regarding efficiency, our accelerated algorithm has the function query complexity of $\mathcal{O}(\epsilon^{-7/4}d)$ for finding an $\epsilon$-approximate first-order stationary point. By introducing a small perturbation, it exhibits a function query complexity of $\tilde{\mathcal{O}}(\epsilon^{-7/4}d)$ for seeking a second-order stationary point with a high probability, matching state-of-the-art result in Euclidean space. Moreover, we further establish the almost sure convergence in the asymptotic sense through the Stable Manifold Theorem. Regarding robustness, our algorithm requires larger smoothing parameters in the order of $\tilde{\mathcal{O}}(\epsilon^{7/8}d^{-1/2})$, improving the existing result by a factor of $\tilde{\mathcal{O}}(\epsilon^{3/4})$.
\end{abstract}

\section{Introduction}

Many machine learning problems frequently encounter situations where computing function gradients is costly or even infeasible. For instance, the tasks such as optimal linear combination prediction \cite{das2022estimating} and Bayesian optimization in robot learning \cite{jaquier2018geometry,jaquier2020bayesian} involve objective functions, lacking analytical forms, only observable through point-wise evaluations. Furthermore, the design space of interest is also complicated, involving constraints such as the unit sphere, probability simplex, and positive definite matrices. The limited function information and inherent constraints render these problems challenging to solve. One potent strategy for dealing with these constraints is re-expressing them through the lens of \textit{Riemannian manifolds} \cite{absil2009optimization,boumal2023introduction}. Mathematically, we can formulate the problem in consideration as follows:
\begin{equation}\label{eq.main}
\min_{x \in \M} \quad f(x),
\end{equation}
where $\M$ represents the Riemannian manifold, and $f(\cdot)$ is a \textit{nonconvex} objective function with only zeroth-order information (i.e.\ function value) available. 
For ease of discussion, we assume $f(\cdot)$ is lower bounded, i.e. $f(x) \ge f_{\operatorname{low}}$ for all $x \in \M$.
Recently, a pioneering work by \citet{li2023stochastic} introduced several Riemannian \textit{zeroth-order} algorithms to tackle problem \eqref{eq.main}, relying solely on the query of function values. 
It is well known that
the function query complexity is a key to measure the efficiency of the zeroth-order algorithms,
whereas these algorithms only exhibit inferior complexity to the one in Euclidean space.
This raises a natural question: \textit{Is it possible to develop a Riemannian accelerated zeroth-order algorithm with lower function query complexity?}

\renewcommand{\arraystretch}{1.3} 
\definecolor{LightCyan}{rgb}{0.9,1,0.9}
\begin{table*}[t] 
    \label{tab:convergence_comparison}
	\small
	\centering 
	\caption{Comparison of zeroth-order algorithms in terms of the ability to handle Riemannian manifolds, value of smoothing parameter, and function query complexity for nonconvex objective function. The symbol $\dagger$ is used to indicate that this algorithm converges to $\epsilon$-approximate first-order stationary points; otherwise, it converges to $\epsilon$-approximate second-order stationary points.}
	\vspace{0.2cm}
	\begin{tabular}{cccc} \toprule
		{Algorithms} &Riemanian Manifolds&  Smoothing parameter $\mu$ & Function query complexity \\   \midrule
		PAGD \cite{vlatakis2019efficiently}& \XSolidBrush   &  $\mathcal{O}\left(\frac{\epsilon^{3/2}}{\sqrt{d}}\right)$   &$\tilde{\mathcal{O}}\left(\frac{d}{\epsilon^2}\right)$
		\\  \midrule
  		ZO-GD \cite{bai2020escaping}& \XSolidBrush   &  $\mathcal{O}\left(\frac{\epsilon^3}{d^2}\right)$   &$\tilde{\mathcal{O}}\left(\frac{d^2}{\epsilon^{8}}\right)$
        \\   
      \midrule ZO-GD-NCF \cite{zhang2022zeroth}&\XSolidBrush  &  $\mathcal{O}\left(\frac{\epsilon^{1/2}}{d^{1/4}}\right)$    &$\tilde{\mathcal{O}}\left(\frac{d}{\epsilon^2} \right)$
    		\\  \midrule 
		ZO-PAGD \cite{zhang2022faster} &\XSolidBrush  &  $\tilde{\mathcal{O}}\left(\frac{\epsilon^{13/8}}{\sqrt{d}}\right)$   &$\tilde{\mathcal{O}}\left(\frac{d}{\epsilon^{7/4}}\right)$
        \\  \midrule ZOPGD \cite{ren2023escaping} &\XSolidBrush  &  $\tilde{\mathcal{O}}\left(\frac{\epsilon^{1/2}}{d}\right)$    &$\tilde{\mathcal{O}}\left(\frac{d}{\epsilon^2} \right)$
  		\\  \midrule 
		ZO-RGD \cite{li2023stochastic} &\Checkmark  &  $\mathcal{O}\left(\frac{\epsilon}{d^{3/2}}\right)$   &$\mathcal{O}\left(\frac{d}{\epsilon^{2}}\right)^\dagger$
        \\  \midrule
		\belowrulesepcolor{LightCyan}
		\rowcolor{LightCyan}
		RAZGD with Option I (ours) &\Checkmark & $\mathcal{O}\left(\frac{\epsilon^{5/8}}{d^{1/4}}\right)$&     \textcolor{red}{$\mathcal{O}\left(\frac{d}{\epsilon^{7/4}}\right)^\dagger$}    \\    
		\rowcolor{LightCyan}
		Perturbed RAZGD with Option I (ours) &\Checkmark & \textcolor{red}{$\tilde{\mathcal{O}}\left(\frac{\epsilon^{7/8}}{\sqrt{d}}\right)$} &     $\tilde{\mathcal{O}}\left(\frac{d}{\epsilon^{7/4}}\right)$   \\  
		\aboverulesepcolor{LightCyan}  \bottomrule
	\end{tabular} 
	\label{tas1}
	\vspace{-0.1cm}
\end{table*} 

The development of accelerated algorithms is a prominent and active topic within both machine learning and optimization communities. It traces back to the seminal breakthrough by \citet{nesterov1983method}, which paved the way for subsequent advancements in acceleration techniques. Since then, numerous fruitful results have emerged in various scenarios, such as accelerated first-order algorithms \cite{beck2009fast,lin2015universal,carmon2017convex,carmon2018accelerated,jin2018accelerated,li2022restarted} and accelerated second-order algorithms \cite{nesterov2008accelerating,bubeck2019near,jiang2021optimal}. Notably, \citet{zhang2022faster} demonstrated that zeroth-order algorithms can also benefit from acceleration and exhibit an improved complexity. Regarding the optimization problem over Riemannian manifolds, there has also been a growing interest in developing accelerated algorithms \cite{liu2017accelerated,zhang2018towards,criscitiello2022accelerated}, to name a few. Due to the space limitation, a detailed discussion is deferred to Appendix \ref{sec.related work}. Despite significant efforts in designing accelerated algorithms, none of them is applicable to problem \eqref{eq.main}.


To design an algorithm in a gradient-free manner, constructing zeroth-order estimators through function value evaluations becomes necessary. The accuracy of this approximation is tied to the \textit{smoothing parameter} (see Definition \ref{def.ZO estimators}, for example). Although the smaller value of the parameter improves the precision, it may also introduce instability in practical applications \cite{lian2016comprehensive,liu2018zeroth,liu2020primer}. Regrettably, integrating acceleration techniques into zeroth-order algorithms \cite{zhang2022faster} requires smaller smoothing parameters compared to the standard ones \cite{vlatakis2019efficiently,zhang2022zeroth}. In response to these challenges, we introduce a novel \textit{Riemannian accelerated zeroth-order algorithm}. Surprisingly, while maintaining the same function query complexity, our algorithm allows the
use of a larger smoothing parameter, compared to the Euclidean counterpart \cite{zhang2022faster}.
This, in turn, ensures the robust and stable performance of our accelerated zeroth-order algorithm. 

\paragraph{Contributions.} In this paper, we delve into a comprehensive study of Riemannian zeroth-order optimization. Our main contributions are given as follows:
\begin{itemize}
    \item By leveraging the basis of the tangent space, we extend the classical finite-difference gradient approximation to Riemannian manifolds (Definition \ref{def.ZO estimators}). Based on this estimator, we develop a Riemannian accelerated zeroth-order gradient descent (RAZGD) in Algorithm \ref{alg.AZO}, which alternates between the Riemannian zeroth-order gradient descent step (Subroutine \ref{sub.RZGD}) and the tangent space step (Subroutine \ref{sub.TSS} and \ref{sub.TSSA}). 
    
    \item Under some mild assumptions and by setting the initial point as zero in the tangent space step (Subroutine \ref{sub.TSS}), we prove that the RAZGD with Option I has the function query complexity of $\mathcal{O}(\epsilon^{-7/4}d)$ for finding a Riemannian $\epsilon$-approximate first-order stationary point, which improves the existing result by a factor of $O(\epsilon^{-1/4})$ in \cite{li2023stochastic}. For a fair comparison, we present selected zeroth-order algorithms in Table \ref{tab:convergence_comparison}.

    \item By introducing a small perturbation to the initial point in the tangent space step (Subroutine \ref{sub.TSS}), the perturbed RAZGD with Option I seeks a second-order stationary point with a high probability under $\tilde{\mathcal{O}}(\epsilon^{-7/4}d)$ query complexity guarantee, matching state-of-the-art complexity in Euclidean zeroth-order optimization \cite{zhang2022faster}. To get an almost sure convergence result, we further prove that the RAZGD with Option II converges to strict Riemannian second-order stationary points gradually. 
    
    \item Beyond the function query complexity, the perturbed RAZGD with Option I showcases resilience in choosing the smoothing parameter—an essential factor ensuring the robustness of zeroth-order algorithms. With the same function query complexity guarantee, we establish that the RAZGD only requires the smoothing parameter $\mu = \tilde{\mathcal{O}}(\epsilon^{7/8}d^{-1/2})$ for seeking $\epsilon$-approximate second-order stationary points, sharpening the existing best result $\tilde{\mathcal{O}}(\epsilon^{13/8}d^{-1/2})$ in \citet{zhang2022faster}. 

\end{itemize}

\section{Preliminaries: Optimization over manifolds}
In this section, we present the basic setup and mathematical tools for optimization over manifolds. For more details, we refer readers to see \cite{absil2009optimization,boumal2023introduction}. Throughout this paper, we use the convention $\mathcal{O}(\cdot)$ and $\Omega(\cdot)$ to denote lower and upper bounds with a universal constant, respectively. $\tilde{\mathcal{O}}(\cdot)$ ignores the polylogarithmic terms. We use $d$ to denote both the dimension of the Riemannian manifold $\M$ (i.e., $\operatorname{dim}(\M) = d$) and the dimension of the Euclidean space $\mathbb{R}^d$.

A $d$-dimensional manifold $\M$ is a topological space where each point has a neighborhood homomorphic to $d$-dimensional Euclidean space, as illustrated in Figure \ref{fig:coordinate}. A Riemannian manifold $\M$ is a real, smooth manifold equipped with a Riemannian metric. Each $x \in \M$ is associated with a $d$-dimensional real vector space $\T_x\M$, referred to as the tangent space at $x$. The Riemannian metric defines an inner product $\inner{\cdot}{\cdot}_x$ on the tangent space $\T_x\M$. The inner metric induces a corresponding norm $\|\cdot\|_x$. We denote these by $\inner{\cdot}{\cdot}$ and $\norm{\cdot}$ when there is no confusion for $x$ from the context. A vector in the tangent space is known as a tangent vector. The set of pairs $(x, s_x)$ for $x\in\M, s_x\in\T_x\M$ is called the tangent bundle $\T\M$. On the tangent space, we define $\mathbb{B}_{x,r}(s) = \{z \in\T_x\M: \norm{z-s}_x \leq r\}$, representing the closed ball of radius $r$ centered at $s \in \T_x\M$. Then we use $\Uniform(\mathbb{B}_{x,r}(s))$ to define the uniform distribution over the ball $\mathbb{B}_{x,r}(s)$.
\begin{figure}[H]
    \centering
    \includegraphics[scale=0.25]{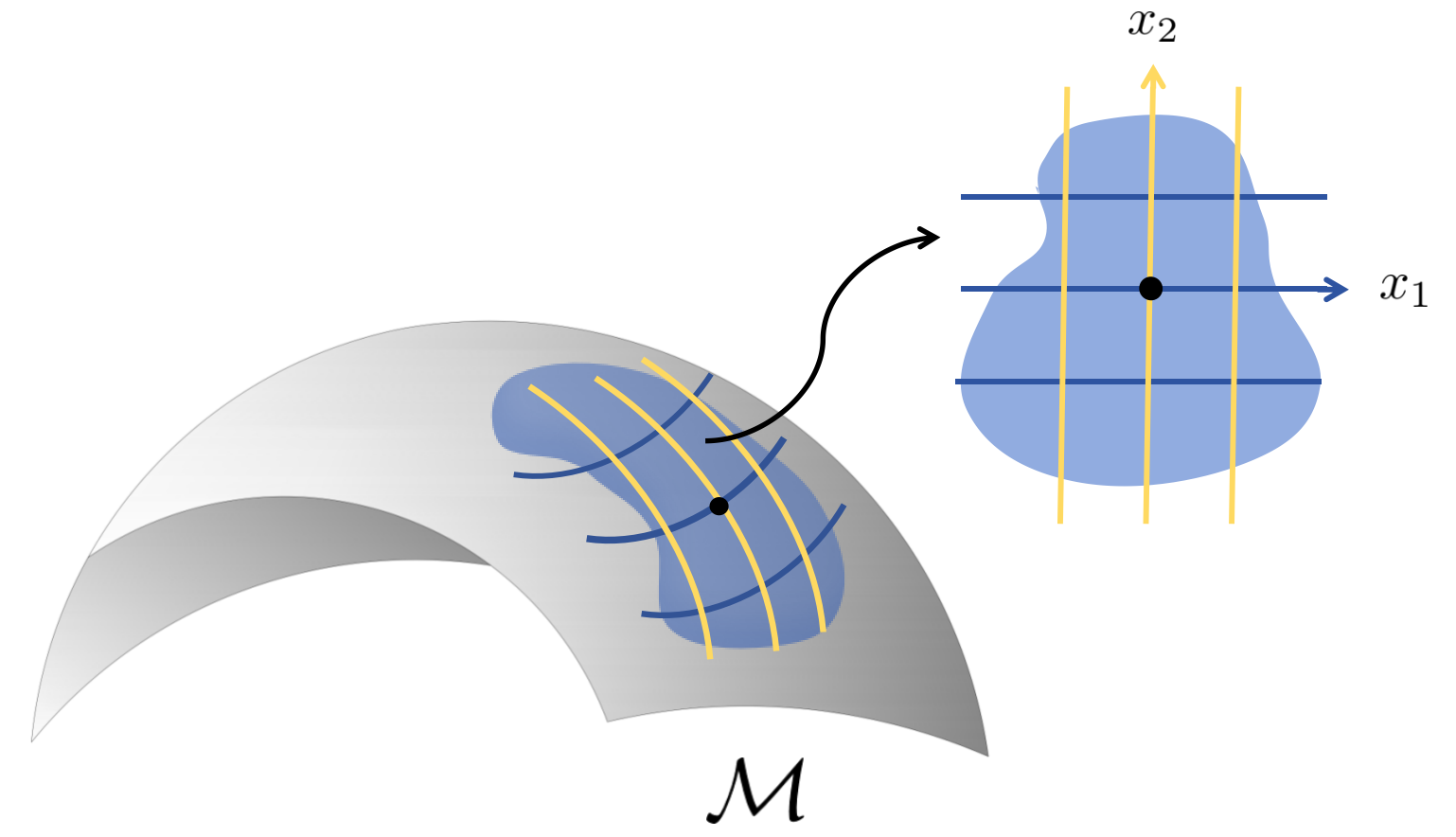}
    \caption{A 2-dimensional manifold}
    \label{fig:coordinate}
\end{figure}
Given a smooth function $f(\cdot)$, the Riemannian gradient $\grad f(x)$ of $f$ at $x \in \M$ is the unique vector in $\T_x\M$ that satisfies $\D f(x)[s] = \inner{\grad f(x)}{s}_x$ for all $s \in \T_x\M$, where $\D f(x)[s]$ is the directional derivative of $f$ at $x$ along $s$. The Riemannian metric gives rise to a well-defined notion of the derivative of vector fields, known as the Levi--Civita connection $\nabla$. The Riemannian Hessian of $f$ is the derivative of the gradient vector field: $\Hess f(x)[u] = \nabla{u}\grad f(x)$, which is a symmetric linear operator on $\T_x\M$. For the smooth curve $\gamma: [0,1] \rightarrow \M$, the velocity of the curve is defined as $\frac{d \gamma}{d t}=\gamma^{\prime}(t)$. The intrinsic acceleration $\gamma^{\prime \prime}$ of $\gamma$ is the covariant derivative of the velocity of $\gamma^\prime$: $\gamma^{\prime \prime}=\frac{\mathrm{D}}{d t} \gamma^{\prime}$ induced by the Levi–Civita connection.

We proceed to introduce the $\epsilon$-approximate stationary point on Riemannian manifolds.
\begin{definition}\label{def.approximate stationary points}
    For any $\epsilon > 0$, a point $x \in \M$ is an $\epsilon$-approximate Riemannian first-order stationary point (RFOSP) of the smooth function $f(\cdot)$ if it satisfies $\|\grad f(x)\| \le \mathcal{O}(\epsilon)$. Furthermore, if it additionally satisfies $\lambda_{\min}\left(\Hess f(x)\right) \ge \Omega(-\sqrt{\epsilon})$, then $x$ is an $\epsilon$-approximate Riemannian second-order stationary point (RSOSP), where $\lambda_{\min}(\cdot)$ denotes the the smallest eigenvalue of the symmetric operator.
\end{definition}
We also present the definition of strict Riemannian saddle points and second-order stationary points:
\begin{definition}\label{def.strict stationary points}
A point $x \in \M$ is a strict Riemannian saddle point of the smooth function $f(\cdot)$ if it satisfies $\grad f(x) = 0$ and $\lambda_{\min} \left(\Hess f(x)\right) < 0$. Otherwise, it is a strict Riemannian second-order stationary point when $\grad f(x) = 0$ and $\lambda_{\min} \left(\Hess f(x)\right) \ge 0$.
\end{definition}

To optimize over Riemannian manifolds, a key concept is the retraction (Figure \ref{fig:retraction})—a mapping enabling movement along the manifold from a point $x$ in the direction of a tangent vector $s \in \T_x\M$. This is formalized as follows:
\begin{definition}\label{def.retr}
A retraction mapping $\retr_x \colon \T_x\M \rightarrow \M$ is a smooth mapping satisfies $\retr_x(0) = x$, where $0$ is the zero vector in $\T_x\M$. Moreover, for $x \in \M$ and $s \in \T_x\M$, let
\begin{align*}
	T_{x,s} = \D\retr_x(s) \colon \T_x\M \to \T_{\retr_x(s)}\M
\end{align*}
denote the differential of $\retr_x$ at $s$ (a linear operator). The differential of $\retr_x$ at $0$, i.e. $T_{x,0}$, is the identity map.
\end{definition}
For instance, we employ $\retr_x(s) = x+s$ when $\M = \mathbb{R}^d$. On the unit sphere $\mathbb{S}^{d-1}:=\left\{x \in \mathbb{R}^d: \|x\|_2 = 1\right\}$, the retraction mapping is typically defined as $\retr_x(s) = (x+s)/\norm{x+s}_2$.
\begin{figure}[H]
    \centering
    \includegraphics[scale=0.25]{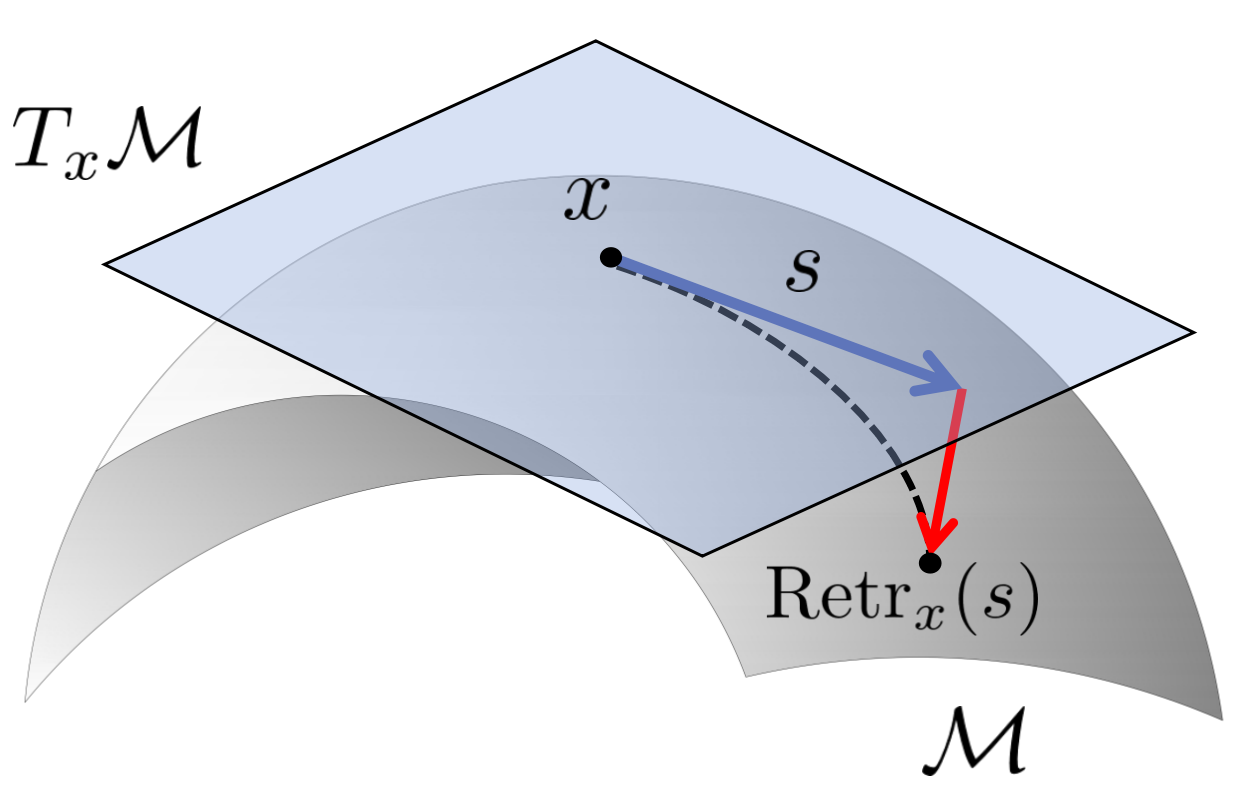}
    \caption{Retraction}
    \label{fig:retraction}
\end{figure}
Following a similar manner in \cite{criscitiello2019efficiently,criscitiello2022accelerated}, in this paper, our analysis is based on the pullback function defined as follows.
\begin{definition}\label{def.pullback}
For any $x \in \M$, the pullback function $\hat f_x(\cdot)$ is a composite function of $f$ and the retraction mapping, that is
\begin{equation*}
	\hat{f}_x = f \circ \retr_x \colon \T_x\M \rightarrow \mathbb{R}.
\end{equation*}
Specifically, as the differential of $\retr_x$ at $0$ is the identity map, it implies that 
$$
\hat f_x(0) = f(x).
$$
\end{definition}
Note that the pullback function $\hat f_x(\cdot)$ is a real function defined on the tangent space $\T_x\M$, which is locally homomorphic to Euclidean space. With a slight abuse of notation, we can define the usual gradient and Hessian of $\hat f_x(\cdot)$ as $\nabla \hat f_x(\cdot)$ and $\nabla^2 \hat f_x(\cdot)$ (mind the overloaded notation of Levi–Civita connection $\nabla$), respectively.

\section{Riemannian Accelerated Zeroth-order Gradient Descent Algorithm}\label{sec.method}
\subsection{Review of Riemannian gradient descent algorithm}
We begin with an ideal situation in which the gradient information is feasible, and consequently the simplest Riemannian gradient descent \cite{boumal2019global}
\begin{equation*}
    x_{t+1} = \retr_{x_t}(-\eta_t \grad f(x_t)), ~t = 0, 1, \ldots.
\end{equation*}
is applicable to  problem \eqref{eq.main}. For the nonconvex objective function, the basic idea behind the convergence analysis of Riemannian gradient descent revolves around a two-case discussion based on the magnitude of the gradient at the current iterate. If the norm of Riemannian gradient satisfies $\|\grad f(x_t)\| \ge \Omega(\epsilon)$, Riemannian gradient descent is shown to result in a decrease in the function value of $O(\epsilon^2)$. On the other hand, if the gradient norm is below this threshold, the current point is already an $\epsilon$-approximate Riemannian first-order stationary point. Thus, the Riemannian gradient descent algorithm requires at most $\mathcal{O}(\epsilon^{-2})$ steps to find a first-order stationary point. 
\subsection{The algorithm design}
Inspired by the Riemannian gradient descent algorithm, we employ an unconventional strategy, aiming for a more aggressive function value decrease at each update—a crucial element in designing a faster Riemannian algorithm. Given the inaccessibility of the Riemannian gradient, we carefully examine the value of the zeroth-order estimator. When the Riemannian zeroth-order estimator at the current iterate $x_t$ exceeds $\Omega\left(\sqrt{\epsilon}\right)$—deviating from the standard value of $\Omega\left(\epsilon\right)$—we choose to proceed with the Riemannian zeroth-order gradient descent step (Subroutine \ref{sub.RZGD}), resulting in the function value decrease of $O(\epsilon)$. 

For the iterate $x_t$ with a small Riemannian zeroth-order estimator, we choose the tangent space step (Subroutine \ref{sub.TSS}), which involves the accelerated zeroth-order gradient descent update in the tangent space $\T_{x_t}\M$, as depicted in Figure \ref{fig:tss}. In the tangent space step, we set a similar termination criterion as \cite{li2022restarted}, ensuring that the tangent space step either results in the function value decrease of $O(\epsilon^{3/2})$ or returns a stationary point. Therefore, after a single update from $x_t$ to $x_{t+1}$, the function value takes a decrease at least $O(\epsilon^{3/2})$, which is larger compared to the standard case. Combining all these components, we introduce the Riemannian accelerated zeroth-order gradient descent in Algorithm \ref{alg.AZO}. By always selecting Option I, it achieves lower query complexity in the non-asymptotic analysis. Moreover, with a slightly modified tangent space step (Subroutine \ref{sub.TSSA}), the RAZGD with Option II almost surely avoids strict saddle points asymptotically.

\begin{figure}[H]
    \centering
    \includegraphics[scale=0.25]{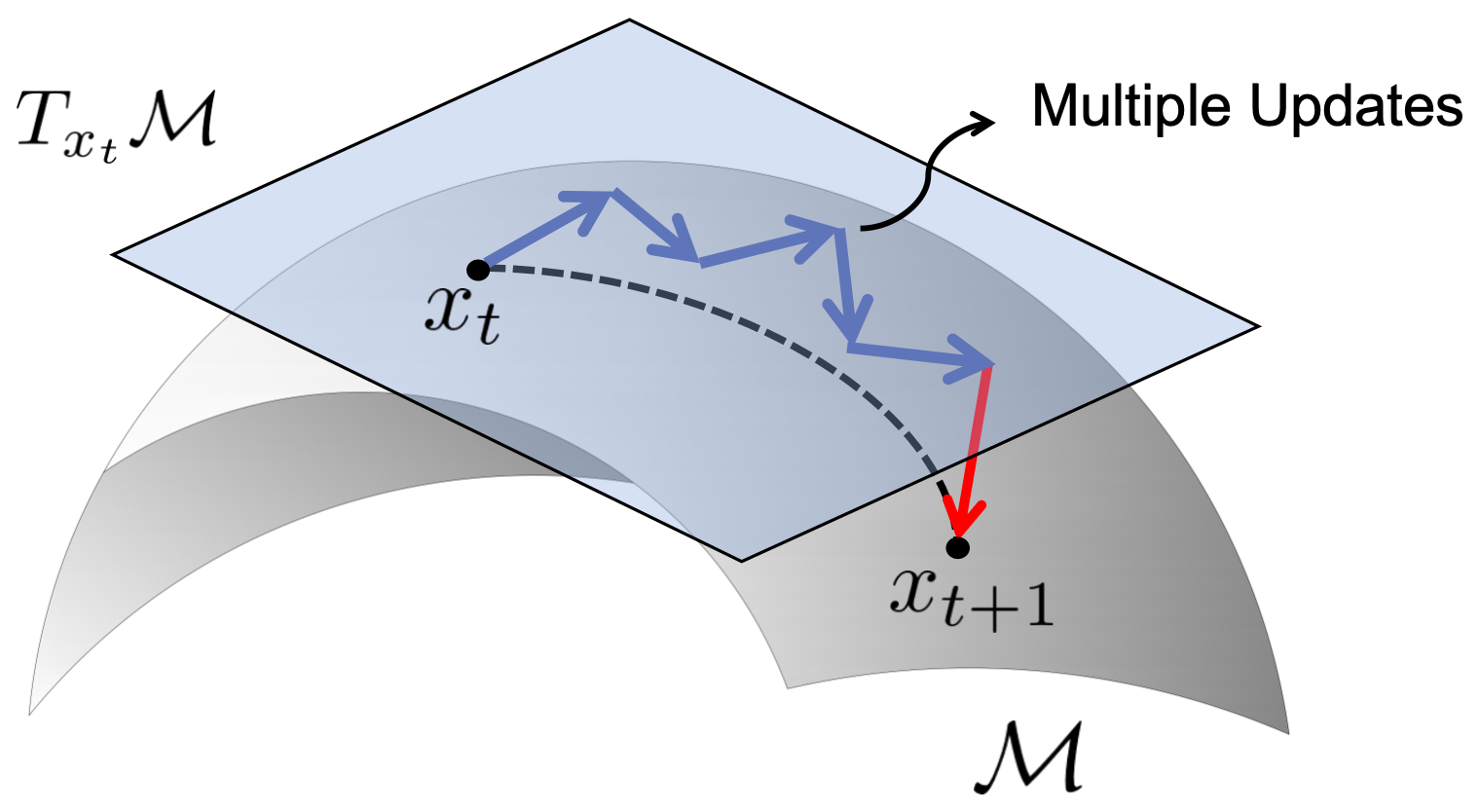}
    \caption{Tangent space step}
    \label{fig:tss}
\end{figure}

\begin{algorithm}[h]
   \caption{\textbf{R}iemannian \textbf{A}ccelerated \textbf{Z}eroth-order \textbf{G}radient \textbf{D}escent Algorithm (\textbf{RAZGD})}
   \label{alg.AZO}
\begin{onehalfspace} %
\begin{algorithmic}[1]
  \STATE \textbf{input:} parameters $\eta$, $\theta$, $B$, $K$ and $r$
      \STATE\textbf{initialize:} $x_0 \in \M$, $t = 0$
      \FOR{$t = 0, 1, \cdots, \infty$}
        \STATE Compute estimator $g_{x_t}(0;\mu)$
        \IF{$\|g_{x_t}(0; \mu)\| \ge lB$}
            \STATE $x_{t+1} = \textbf{RZGDS}(x_t,\eta,g_{x_t}(0;\mu))$
        \ELSE
            \STATE \textbf{Option I:} $x_{t+1} = \textbf{TSS}(x_t, \eta, \theta, B, K, r)$
            \STATE \textbf{Option II:} $x_{t+1} = \textbf{TSSA}(x_t, \eta, \theta, B)$
        \ENDIF
      \ENDFOR
\end{algorithmic}
\end{onehalfspace}
\end{algorithm}

\floatname{algorithm}{Subroutine}
\setcounter{algorithm}{0}
\begin{algorithm}[h]
   \caption{\textbf{R}iemannian \textbf{Z}eroth-order \textbf{G}radient \textbf{D}escent \textbf{S}tep (\textbf{RZGDS})}
   \label{sub.RZGD}
\begin{onehalfspace}
\begin{algorithmic}[1]
  \STATE \textbf{input:} $x$, $\eta$, and $g_{x}(0;\mu)$
        \IF{$\eta\|g_{x}(0; \mu)\| \le b$}
            \STATE Return $\retr_{x}(-\eta g_{x}(0;\mu))$
        \ELSE
            \STATE Compute $\alpha \in (0, 1)$ such that $
            \alpha\eta\| g_{x}(0; \mu)\| = b$
            \STATE Return $\retr_{x}(-\alpha\eta g_{x}(0;\mu))$
        \ENDIF
\end{algorithmic}
\end{onehalfspace}
\end{algorithm}

\begin{algorithm}[h]
    \caption{\textbf{T}angent \textbf{S}pace \textbf{S}tep  (\textbf{TSS})}\label{sub.TSS}
    \begin{onehalfspace}
    \begin{algorithmic}[1]
      \STATE \textbf{input:} $x$, $\eta$, $\theta$, $B$, $K$ and $r$ 
      \STATE \textbf{initialize:} $s_x^{-1} = s_x^0 = \xi \sim \Uniform(\mathbb{B}_{x,r}(0))$, $k = 0$
      \WHILE{$k < K$}
        \STATE $y_x^k = s_x^k + (1 - \theta) (s_x^k - s_x^{k-1})$
        \STATE Compute estimator $g_{x}(y_x^k;\mu)$
        \STATE $s_x^{k+1} = y_x^k - \eta g_{x}(y_x^k; \mu)$
        \STATE $k = k+1$
        \IF{$k \sum_{j=0}^{k-1} \|s_x^{j+1} - s_x^{j}\|^2 > B^2$}
            \STATE Return $\retr_x(s_x^k)$ and break
        \ENDIF
      \ENDWHILE
      \STATE $K_0 = \argmin_{\lfloor\frac{K}{2}\rfloor \le k \le K-1} \|s_x^{k+1} - s_x^k\|$
      \STATE $y_x^* = \frac{1}{K_0 + 1} \sum_{k=0}^{K_0} y_x^k$
      \STATE Return $\retr_x(y_x^*)$
  \end{algorithmic}
  \end{onehalfspace}
\end{algorithm}

\begin{algorithm}[h]
    \caption{\textbf{T}angent \textbf{S}pace \textbf{S}tep \textbf{A}symptotic (\textbf{TSSA})}\label{sub.TSSA}
    \begin{onehalfspace}
    \begin{algorithmic}[1]
      \STATE \textbf{input:} $x$, $\eta$, $\theta$ and $B$
      \STATE \textbf{initialize:} $s_x^{-1} = s_x^0 = 0$, $k = 0$, and constant $\beta < 1$, or $\beta_k=1-\frac{1}{k+2}$ 
      \WHILE{$k \sum_{j=0}^{k-1} \|s_x^{j+1} - s_x^{j}\|^2 \le B^2$}
        \STATE $y_x^k = s_x^k + (1 - \theta) (s_x^k - s_x^{k-1})$
        \STATE Compute estimator $g_{x}(y_x^k;\mu)$
        \STATE $s_x^{k+1} = y_x^k - \eta g_{x}(y_x^k; \mu)$
        \STATE $k = k+1$
        \STATE $\mu = \beta\mu$ (or $\mu=\beta_k\mu$ )
      \ENDWHILE
      \STATE $K_0 = \argmin_{\lfloor\frac{K}{2}\rfloor \le k \le K-1} \|s_x^{k+1} - s_x^k\|$
      \STATE $y_x^* = \frac{1}{K_0 + 1} \sum_{k=0}^{K_0} y_x^k$
      \STATE Return $\retr_x(y_x^*)$
  \end{algorithmic}
  \end{onehalfspace}
\end{algorithm}

As demonstrated in both tangent space steps, multiple zeroth-order updates are performed in the tangent space. To ensure the well-definiteness of the tangent space step, we define the zeroth-order estimator for every pair $(x, s_x)$ in the tangent bundle $\T\M$, where $s_x$ is the point in the tangent space $\T_x\M$. In the algorithm and its subroutines, the notation $g_x(s_x;\mu)$ represents the zeroth-order estimator for the gradient of the pullback function $\nabla \hat f_x(s_x)$ at the pair $(x, s_x) \in \T\M$, incorporating a smoothing parameter $\mu$. The formal definition is shown as follows, which generalizes the classic finite difference gradient approximation in Euclidean space \cite{scheinberg2022finite}.
\begin{definition}\label{def.ZO estimators}
Given a smoothing parameter $\mu > 0$ and a point $x \in \M$, the Riemannian coordinate-wise zeroth-order estimator at the point $s_x \in \T_x\M$ is defined as
\begin{equation}
    g_x(s_x; \mu) = \sum_{i=1}^d \frac{\hat f_x(s_x +\mu e_i) - \hat f_x(s_x -\mu e_i)}{2\mu}e_i, \notag
\end{equation}
where $\{e_1, e_2, \ldots, e_d\}$ is the basis of the tangent space $\T_x\M$.
\end{definition}

For compactness, the approximation error of the Riemannian coordinate zeroth-order estimator is deferred to Appendix \ref{sec.property estimator}.

\section{Convergence Analysis}\label{sec.analysis}
\subsection{Mild assumptions} 

We start with the following assumptions on the Riemannian manifold and objective function, which will be used throughout our analysis. Due to the space limit, we have left a discussion of these assumptions in Appendix \ref{sec.discussion assum}. Firstly, generalizing from the Euclidean case, we assume the Lipschitz continuity of the gradient and Hessian of the pullback function $\hat f_x(\cdot)$. However, it is worth noting that Lipschitz continuity holds only locally due to the nonlinear structure of Riemannian manifolds \cite{criscitiello2022accelerated}. 
\begin{assumption}\label{assum.lipschitz gradient}
There exists constants $b > 0$ and $l > 0$ and $\rho > 0$ such that for all $x \in \M$ and $s,t \in \mathbb{B}_{x,b}(0)$, the pullback function $\hat f_x(\cdot)$ satisfies
\begin{equation*}
    \norm{\nabla\hat{f}_x(s) - \nabla\hat{f}_x(t)} \leq l\norm{s-t}.
\end{equation*}
\end{assumption}
\begin{assumption}\label{assum.lipschitz hessian}
There exists constants $b > 0$ and $\rho > 0$ such that for all $x \in \M$ and and $s,t \in \mathbb{B}_{x,b}(0)$, the pullback function $\hat f_x(\cdot)$ satisfies
\begin{equation*}
    \norm{\nabla^2\hat{f}_x(s) - \nabla^2\hat{f}_x(t)} \leq \rho\norm{s-t}.
\end{equation*}  
\end{assumption}

The next assumption requires that the retraction is well-behaved.
\begin{assumption}\label{assum.retraction}
For any $x \in \M$ and $s \in \T_x\M$ satisfying $\norm{s} \le b$, the singular value of the operator $T_{x,s}$ in Definition \ref{def.retr} is bounded, that is, there exists $\sigma_{\max}, \sigma_{\min} > 0$ such that
\begin{equation*}
   \sigma_{\min} \le \sigma_{\min}(T_{x,s}) \le \sigma_{\max}(T_{x,s}) \le \sigma_{\max}.
\end{equation*}
Furthermore, there exists $\tau \geq 0$ such that the initial acceleration of the the curve $\gamma_{x, s}(t)=\operatorname{Retr}_x(t s)$ with $\|s\| = 1$ is bounded by $\tau:\left\|\gamma_{x, s}^{\prime \prime}(0)\right\| \leq \tau$.
\end{assumption}

\subsection{Non-asymptotic convergence}
In this subsection, we aim to find $\epsilon$-approximate stationary points, which is achieved by always choosing Option I in RAZGD. Our theoretical findings yield different convergence results based on the choice of the initial point $\xi$ in the tangent space step \ref{sub.TSS}. When setting $\xi$ to zero, the RAZGD converges to a first-order Riemannian stationary point. Introducing a small perturbation to $\xi$, the perturbed RAZGD seeks a second-order Riemannian stationary point with high probability. The results are presented below, and the associated proofs are deferred to Appendix \ref{sec.proof non-asymptotic}.
\begin{theorem}\label{thm.fosp}
    Suppose that Assumption \ref{assum.lipschitz gradient}, \ref{assum.lipschitz hessian} and \ref{assum.retraction} hold. Set the parameters in Algorithm \ref{alg.AZO} as follows
    \begin{equation}
       \eta = \frac{1}{4l}, ~B=\frac{1}{8}\sqrt{\frac{\epsilon}{\rho}}, ~\theta = \frac{\rho^{\frac{7}{4}}\epsilon^{\frac{1}{4}}}{l}, ~r=0, ~K = \frac{\rho^{\frac{5}{4}}}{4\epsilon^{\frac{1}{4}}}. \notag
    \end{equation}
     For any $x_0 \in \M$ and sufficiently small $\epsilon > 0$, 
     choose $\mu = \mathcal{O}\left(\frac{\epsilon^{1/4}}{d^{1/4}}\right)$ in Lines 3 of Algorithm \ref{alg.AZO}, and $\mu = \mathcal{O}\left(\frac{\epsilon^{5/8}}{d^{1/4}}\right)$ in Line 5 of Subroutine \ref{sub.TSS}. Then Algorithm \ref{alg.AZO} with Option I outputs an $\epsilon$-approximate first-order stationary point. The total number of function value evaluations is no more than
    \begin{equation}
        \mathcal{O}\left(\frac{(f(x_0) - f_{\operatorname{low}})d}{\epsilon^{\frac{7}{4}}}\right). \notag
    \end{equation}
\end{theorem}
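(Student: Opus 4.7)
The plan is to show that every outer iteration of RAZGD under Option I either produces a function value decrease of order $\Omega(\epsilon^{3/2})$ or outputs a point that is already an $\epsilon$-approximate Riemannian first-order stationary point. Summing these decreases against the budget $f(x_0) - f_{\mathrm{low}}$ caps the number of outer iterations at $\mathcal{O}((f(x_0)-f_{\mathrm{low}})\epsilon^{-3/2})$, and since each outer iteration calls at most $K+1$ coordinate-wise estimators, each costing $2d$ function evaluations, the substitution $K = \Theta(\epsilon^{-1/4})$ immediately yields the claimed total query complexity of $\mathcal{O}((f(x_0)-f_{\mathrm{low}}) d\, \epsilon^{-7/4})$.

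The first step is to quantify the deterministic error of the estimator in \autoref{def.ZO estimators}. By a second-order Taylor expansion of the pullback function together with \autoref{assum.lipschitz hessian}, I expect $\|g_x(s_x;\mu)-\nabla\hat f_x(s_x)\| \le c\rho\mu^2\sqrt{d}$ for some universal constant $c$, whenever $s_x\pm\mu e_i$ all lie inside $\mathbb{B}_{x,b}(0)$. Plugging in the two prescribed values gives errors $\mathcal{O}(\sqrt{\epsilon})$ for the outer test in Line~3 of \autoref{alg.AZO} and the much sharper $\mathcal{O}(\epsilon^{5/4})$ inside \textbf{TSS}, which is precisely what the accelerated analysis below will require.

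For the \textbf{RZGDS} branch, the gate condition $\|g_{x_t}(0;\mu)\|\ge lB$ together with the estimator bound forces $\|\grad f(x_t)\|=\Omega(\sqrt{\epsilon})$. Applying the standard descent lemma from \autoref{assum.lipschitz gradient} to $\hat f_{x_t}$ with the safeguarded step length $\min(\eta,b/\|g\|)$ then yields a per-step decrease of order $\Omega(\epsilon)$, which beats the $\Omega(\epsilon^{3/2})$ target. For the \textbf{TSS} branch, failure of the gate implies $\|\nabla\hat f_{x_t}(0)\|=\mathcal{O}(\sqrt{\epsilon})$, and the proof follows the restarted-AGD Lyapunov analysis in the style of \cite{li2022restarted}, adapted to the pullback function. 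Two cases arise. If the early-termination test $k\sum_{j<k}\|s_x^{j+1}-s_x^{j}\|^2>B^2$ ever triggers, the Lyapunov argument converts the ``large movement'' certificate into $\hat f_{x_t}(s_x^k)\le\hat f_{x_t}(0)-\Omega(\epsilon^{3/2})$. Otherwise the uniform bound $\sum_j\|s_x^{j+1}-s_x^j\|^2\le B^2/K$ gives $\min_{K/2\le k\le K-1}\|s_x^{k+1}-s_x^k\| = \mathcal{O}(B/K)=\mathcal{O}(\epsilon^{3/4})$; combining this with the AGD identity $\eta g_x(y_x^k;\mu)=y_x^k-s_x^{k+1}$ and the small momentum term $(1-\theta)(s_x^k-s_x^{k-1})$ bounds $\|g_x(y_x^{K_0};\mu)\|$ by $\mathcal{O}(\epsilon)$. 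Averaging across the window $\lfloor K/2\rfloor\le k\le K-1$, whose iterates are all close together thanks to the budget $B$, together with \autoref{assum.lipschitz gradient} and the retraction regularity in \autoref{assum.retraction}, transfers this bound to $\|\grad f(\retr_{x_t}(y_x^*))\|=\mathcal{O}(\epsilon)$.

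I expect the main technical obstacle to be the accelerated analysis inside \textbf{TSS}: one needs a Lyapunov function that tolerates the additive estimator error without destroying the monotonicity used to certify the $\Omega(\epsilon^{3/2})$ decrease in the early-termination case. This is exactly why the subroutine's $\mu$ is taken smaller than the outer one: the accumulated error across $K=\mathcal{O}(\epsilon^{-1/4})$ inner steps is $\mathcal{O}(\epsilon^{5/4})\cdot\mathcal{O}(\epsilon^{-1/4})=\mathcal{O}(\epsilon)$, a lower-order perturbation of the targeted $\Omega(\epsilon^{3/2})$ decrease that can be absorbed into the Lyapunov bookkeeping. A secondary but nontrivial point is to ensure all iterates remain in the trust region $\mathbb{B}_{x,b}(0)$ where \autoref{assum.lipschitz gradient} and \autoref{assum.lipschitz hessian} apply, which the budget $B$ and the safeguard in \textbf{RZGDS} enforce by construction.
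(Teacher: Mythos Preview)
Your high-level plan matches the paper's proof: split on the gate $\|g_{x_t}(0;\mu)\|\ge lB$, show a per-outer-iteration decrease of $\Omega(\epsilon^{3/2})$ unless the \textbf{TSS} subroutine terminates with a certified first-order stationary point, and count queries. The estimator error bound, the \textbf{RZGDS} descent argument, and the early-termination case of \textbf{TSS} via the \cite{li2022restarted} eigen-decomposition/Lyapunov analysis are all in line with what the paper does.

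There is, however, a genuine gap in your treatment of the case where \textbf{TSS} runs the full $K$ steps without triggering. The claim that ``$\|g_x(y_x^{K_0};\mu)\|=\mathcal{O}(\epsilon)$'' is incorrect as stated. From $\min_{K/2\le k<K}\|s_x^{k+1}-s_x^k\|=\mathcal{O}(B/K)=\mathcal{O}(\epsilon^{3/4})$ and the update $\eta g_x(y_x^k;\mu)=(s_x^k-s_x^{k+1})+(1-\theta)(s_x^k-s_x^{k-1})$, you can only conclude $\|g_x(y_x^{K_0};\mu)\|=\mathcal{O}(B/(K\eta))=\mathcal{O}(\epsilon^{3/4})$, because the momentum increment $\|s_x^{K_0}-s_x^{K_0-1}\|$ is \emph{not} known to be small. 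Averaging over the window $[\lfloor K/2\rfloor,K-1]$ and invoking Lipschitzness does not recover $\mathcal{O}(\epsilon)$ either: the iterates are only $\mathcal{O}(B)=\mathcal{O}(\sqrt{\epsilon})$ apart. The paper's \autoref{lemma.small gradient} resolves this by averaging the update identity over $k=0,\dots,K_0$ (not over the window), so that the momentum terms \emph{telescope} down to $\theta(\tilde s^{K_0}-\tilde s^0)+( \tilde s^{K_0+1}-\tilde s^{K_0})$; the extra factor $\theta$ on the bulk displacement is what turns $B$ into $\theta B/(K\eta)=\mathcal{O}(\epsilon)$. You need to run the average from $k=0$ and use $s_x^{-1}=s_x^0$ to get this cancellation.

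A secondary imprecision: your accumulated-error heuristic ``$\mathcal{O}(\epsilon^{5/4})\cdot\mathcal{O}(\epsilon^{-1/4})=\mathcal{O}(\epsilon)$, a lower-order perturbation of $\Omega(\epsilon^{3/2})$'' is too optimistic for the early-termination case. The dominant error term in the paper's Corollary (the decrease bound) is $\frac{K}{2B^{3/2}}\mathbf{E}(\mu)^2=\mathcal{O}(\epsilon^{-1/4}\cdot\epsilon^{-3/4}\cdot\epsilon^{5/2})=\mathcal{O}(\epsilon^{3/2})$, i.e.\ of the \emph{same} order as the target decrease. The proof works because the constants are arranged to be strictly smaller, not because the error is lower order; this is where careful use of Young's inequality (as the paper emphasizes) is needed rather than a rough order-of-magnitude count.
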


\begin{theorem}\label{thm.sosp}
    Suppose that Assumption \ref{assum.lipschitz gradient}, \ref{assum.lipschitz hessian} and \ref{assum.retraction} hold. Set the parameters in Algorithm \ref{alg.AZO} as follows
    \begin{align*}
        &\eta = \frac{1}{4l}, ~~\quad \quad \theta = \frac{\rho^{\frac{7}{4}}\epsilon^{\frac{1}{4}}}{l}, ~~\quad\quad\chi = \mathcal{O}\left(\log \frac{d}{\delta \epsilon} \right) \ge 1, \\
        &K = \frac{\chi\rho^{\frac{5}{4}}}{4\epsilon^{\frac{1}{4}}}, \quad B=\frac{1}{8\chi^2}\sqrt{\frac{\epsilon}{\rho}}, ~\quad r=\frac{\theta B}{6 K}.
    \end{align*}
     For any $x_0 \in \M$ and sufficiently small $\epsilon > 0$, 
     choose $\mu = \mathcal{O}\left(\frac{\epsilon^{1/4}}{d^{1/4}\chi}\right) = \tilde{\mathcal{O}}\left(\frac{\epsilon^{1/4}}{d^{1/4}}\right)$ in Lines 3 of Algorithm \ref{alg.AZO}, and $\mu = \min\left\{\mathcal{O}\left(\frac{\epsilon^{5/8}}{d^{1/4}\chi^2}\right),\mathcal{O}\left(\frac{\epsilon^{7/8}}{\chi^3\sqrt{d}}\right)\right\} =\tilde{\mathcal{O}}\left(\frac{\epsilon^{7/8}}{\sqrt{d}}\right)$ in Line 5 of Subroutine \ref{sub.TSS}. Then perturbed Algorithm \ref{alg.AZO} with Option I outputs an $\epsilon$-approximate second-order stationary point with a probability of at least $1-\delta$. The total number of function value evaluations is no more than
    \begin{equation}
    O\left(\frac{(f(x_0) - f_{\operatorname{low}})d}{\epsilon^{\frac{7}{4}}}\log^6\left(\frac{d}{\delta\epsilon}\right)\right). \notag
    \end{equation}
\end{theorem}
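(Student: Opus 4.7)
The plan is to combine the accelerated descent analysis underlying Theorem \ref{thm.fosp} with a negative-curvature escape argument powered by the random perturbation $\xi \sim \Uniform(\mathbb{B}_{x,r}(0))$ in Subroutine \ref{sub.TSS}. All of the analysis lives on the pullback $\hat f_{x_t}$, which by Assumptions \ref{assum.lipschitz gradient} and \ref{assum.lipschitz hessian} is $l$-gradient-Lipschitz and $\rho$-Hessian-Lipschitz on $\mathbb{B}_{x_t,b}(0)$, so \textbf{TSS} behaves like inexact Euclidean Nesterov momentum on a smooth nonconvex function.

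First, I would quantify the estimator error. A second-order Taylor expansion together with Assumption \ref{assum.lipschitz hessian} yields
\begin{equation*}
\|g_x(s_x;\mu) - \nabla \hat f_x(s_x)\| \leq \mathcal{O}\bigl(\rho \mu^2 \sqrt{d}\bigr),
\end{equation*}
so the \textbf{TSS}-level choice $\mu = \tilde{\mathcal{O}}(\epsilon^{7/8}/\sqrt d)$ gives per-step estimator error of order $\tilde{\mathcal{O}}(\epsilon^{7/4}/\sqrt d)$, while the coarser outer-loop choice $\mu = \tilde{\mathcal{O}}(\epsilon^{1/4}/d^{1/4})$ remains sharp enough to decide the test $\|g_{x_t}(0;\mu)\| \ge lB$. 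I would then prove an improve-or-localize trichotomy at each outer iteration. When $\|g_{x_t}(0;\mu)\| \ge lB$, \textbf{RZGDS} yields $f(x_{t+1}) - f(x_t) \le -\tilde{\Omega}(B^2/l)$ via the inexact-gradient descent lemma. When \textbf{TSS} triggers the early-exit rule $k\sum_j \|s^{j+1}-s^j\|^2 > B^2$, a Hamiltonian-potential argument for inexact Nesterov momentum, in the spirit of \citet{li2022restarted} and \citet{zhang2022faster}, delivers a decrease of order $\tilde{\Omega}(\epsilon^{3/2}/\sqrt\rho)$. Otherwise \textbf{TSS} runs to completion with all iterates $o(B)$-localized, which forces $\|\nabla \hat f_{x_t}(y_x^*)\| \le \mathcal{O}(\epsilon)$ at the averaged iterate via the minimum-norm step selection $K_0$.

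The crux is showing that this third case rarely coexists with $\lambda_{\min}(\Hess f(x_t)) \le -\sqrt{\rho\epsilon}$. I would run the coupling argument of \citet{jin2018accelerated} adapted to the zeroth-order setting: fix a minimum eigenvector $e_{\min}$ of $\nabla^2 \hat f_{x_t}(0)$ and couple two executions of \textbf{TSS} started from $\xi$ and $\xi' = \xi + r_0 e_{\min}$ with $r_0 \asymp r/\chi$. Linearizing the momentum recursion about $0$, the difference sequence along $e_{\min}$ grows like $(1+\Omega(\theta))^k$ due to the negative eigenvalue, and hence exceeds $B$ after $K = \tilde{\mathcal{O}}(\epsilon^{-1/4})$ steps. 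Thus for at least one of the two runs the NCE trigger fires, so the stuck set of initializations $\xi$ lies inside a slab of width $2r_0$ perpendicular to $e_{\min}$. The $\Uniform(\mathbb{B}_{x,r}(0))$ measure of that slab is $\tilde{\mathcal{O}}(r_0/r) = \tilde{\mathcal{O}}(1/\chi)$, and a union bound over $T = \tilde{\mathcal{O}}((f(x_0)-f_{\operatorname{low}})/\epsilon^{3/2})$ outer iterations forces $\chi = \mathcal{O}(\log(dT/\delta))$, matching the theorem.

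The main obstacle will be controlling the zeroth-order noise inside the coupling. The estimator errors feed into the same momentum recursion as the signal, so for the escape direction to survive, their contribution to the difference sequence must stay dominated by $r_0(1+\Omega(\theta))^k$ throughout the $K$ inner steps. Carrying the noise through the recursion yields a geometric-series bound of order $\tilde{\mathcal{O}}(\eta \rho \mu^2 \sqrt d \cdot (1+\Omega(\theta))^k/\theta)$, and demanding this stay below $r_0$ for all $k \le K$ is precisely what forces $\mu = \tilde{\mathcal{O}}(\epsilon^{7/8}/\sqrt d)$; any looser smoothing would swamp the escape direction and re-introduce the worse bound $\tilde{\mathcal{O}}(\epsilon^{13/8}/\sqrt d)$ of \citet{zhang2022faster}. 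Finally, each outer iteration performs at most $K$ inner steps of $\mathcal{O}(d)$ queries each, so the total budget is $T \cdot K \cdot \mathcal{O}(d) = \tilde{\mathcal{O}}(d(f(x_0)-f_{\operatorname{low}})/\epsilon^{7/4})$, with the $\log^6(d/(\delta\epsilon))$ factor absorbing accumulated $\chi$-powers in $B$, $K$, the slab width, and the final union bound.
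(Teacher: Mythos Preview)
Your high-level architecture matches the paper's exactly: the same improve-or-localize trichotomy (Lemma \ref{lemma.function value decrease large gradient}, Corollary \ref{coro.decrease tss}, Lemma \ref{lemma.small gradient}), the same two-point coupling along the minimum eigenvector of $\nabla^2\hat f_{x_t}(0)$ to rule out the ``stuck and strongly negatively curved'' case, and the same accounting that multiplies $T=\tilde{\mathcal O}(\epsilon^{-3/2})$ outer steps by $K=\tilde{\mathcal O}(\epsilon^{-1/4})$ inner steps times $2d$ queries. So the strategy is right.

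Two quantitative points in your coupling paragraph would not survive a careful writeup, and both touch exactly the constant that controls $\mu$. First, the uniform measure of a width-$2r_0$ slab in $\mathbb{B}_{x,r}(0)\subset\T_{x_t}\M$ is $\Theta(r_0\sqrt d/r)$, not $\Theta(r_0/r)$; with your choice $r_0\asymp r/\chi$ the failure probability per call is $\sqrt d/\chi$, which is not small since $\chi$ is only logarithmic. The paper takes $r_0=\delta r/\sqrt d$ precisely to absorb this $\sqrt d$. Second, in the noise analysis you compare the accumulated estimator error to $r_0$, but the correct comparison is to the \emph{growing} signal $r_0(1+\Omega(\theta))^k$: both the zeroth-order noise and the Hessian-nonlinearity term are propagated through the same momentum matrix, and the induction (the paper's inequality \eqref{subeq.40b}) only needs the per-step noise $2\mathbf E(\mu)$ to be dominated by the per-step nonlinearity $\rho B\cdot r_0|a^r-b^r|\ge \rho B\theta r_0/2$. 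That is the sharp constraint $\mathbf E(\mu)\le \tfrac{1}{2}\rho B\theta r_0$, which with $r_0=\delta r/\sqrt d$ is exactly what forces $\mu=\tilde{\mathcal O}(\epsilon^{7/8}/\sqrt d)$. Your version, demanding the geometric sum stay below $r_0$ for all $k\le K$, introduces an extraneous $(1+\Omega(\theta))^K=e^{\Omega(\chi)}$ factor and would force $\mu$ exponentially smaller than claimed. Fixing these two scales makes your sketch line up with the paper's Lemma \ref{lemma.pertubation escape}.
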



In dealing with the unavailability of the first-order information, we carefully choose the smoothing parameter $\mu$ in the construction of its zeroth-order estimators. On the one hand, a small value of $\mu$ reduces the approximation error, yielding a sufficient decrease in the function value. On the other hand, an excessively small $\mu$ can cause practical instability. Consequently, a trade-off arises in selecting the smoothing parameter, requiring a careful balance between maintaining the decrease in the function value and ensuring practical robustness. In our proofs, we frequently use Young's inequality to guarantee this balance, leading to a better choice of the smoothing parameter compared to the corresponding choice in the Euclidean counterpart \cite{zhang2022faster}.

\begin{remark}
  For the special case of $\M = \mathbb{R}^d$ and $\retr_x(s) = x+s$, Theorem \ref{thm.sosp} reveals that the function query complexity of perturbed RAZGD with Option I matches the state-of-the-art result in Euclidean space \cite{zhang2022faster}. However, to attain the lower function query complexity, the accelerated zeroth-order algorithms in \citet{zhang2022faster} demand the smoothing parameter $\mu = \Tilde{\mathcal{O}}(\epsilon^{13/8}d^{-1/2})$. In contrast, our perturbed RAZGD relaxes this requirement to $\mu = \Tilde{\mathcal{O}}(\epsilon^{7/8}d^{-1/2})$, providing a more robust selection guarantee.      
\end{remark}

\paragraph{Why smoothing parameter $\mu$ is important?}  Compared to first-order algorithms, the notable distinction of zeroth-order algorithms lies in the necessity to construct zeroth-order estimates through function value evaluations. Among these zeroth-order estimates, the smoothing parameter plays a crucial role as an indicator. The efficiency of zeroth-order algorithms is measured by the total number of function value evaluations, while the value of the smoothing parameter determines its robustness. Generally, a smaller smoothing parameter improves the approximation quality of the zeroth-order estimator, see Lemma \ref{lemma.zero order estimator error}, for example. Nevertheless, in practical systems, an excessively small $\mu$ might induce the dominance of system noise in function differences, causing the failure to represent the function differential \cite{lian2016comprehensive,liu2018zeroth,liu2020primer,nguyen2023stochastic}. Therefore, maintaining a relatively large smoothing parameter is paramount to the robustness of zeroth-order algorithms. In the realm of randomized zeroth-order estimators, \citet{ren2023escaping} improved the value of the smoothing parameter from $\mathcal{O}(\epsilon^{3}d^{-2})$ in \cite{bai2020escaping} to a more efficient choice $\mathcal{O}(\epsilon^{1/2}d^{-1})$ for finding second-order stationary points in Euclidean space. When dealing with Riemannian manifolds, \citet{wang2021greene} demonstrated that choosing random vectors uniformly from the unit sphere enables a less restrictive smoothing parameter. The selection of the smoothing parameter can be improved from $\mathcal{O}(\epsilon (d+3)^{-3/2})$ to $\mathcal{O}(\epsilon d^{-3/2})$ for seeking Riemannian first-order stationary points.


\subsection{Asymptotic convergence} Now we turn to investigate the asymptotic convergence of Algorithm \ref{alg.AZO}, which can be proven to avoid Riemannian strict saddle points almost surely by employing Option II, i.e. tangent space step asymptotic. In contrast to Subroutine \ref{sub.TSS}, where the smoothing parameter maintains the same value during the update, in Subroutine \ref{sub.TSSA}, we initialize the smoothing parameter $\mu$ with an appropriate constant value, and then make $\mu$ gradually decay by multiplying it with the contraction parameter $\beta$. Previous results have established non-asymptotic convergence to $\epsilon$-approximate second-order stationary points, indicating that, with high probability, Algorithm \ref{alg.AZO} with Option I will output a point satisfying the specified threshold. However, there is a gap between high probability and probability 1 for convergencing to second-order stationary points. We close this gap by providing the following result asserting that the set of initial points that can be iterated to Riemannian saddle points has measure (the volume induced by Riemannian metric) zero.

\begin{theorem}\label{thm.asymptotic}
Suppose that Assumption \ref{assum.lipschitz gradient}, \ref{assum.lipschitz hessian} and \ref{assum.retraction} hold. For any $x_0 \in \M$ and sufficiently small $\epsilon>0$, set
\[
\frac{\theta}{(2-\theta)\lambda_*}<\eta\le\frac{1}{4 l}, \ \ \theta = \frac{\rho^{\frac{7}{4}}\epsilon^{\frac{1}{4}}}{l} \le\min\left\{\frac{-2\lambda_*}{4l-\lambda_*},1\right\}
\]
where $\lambda_*$ is the negative eigenvalue of Hessian at saddle points with the greatest magnitude. Choose the smoothing parameter $\mu$ with a reasonable constant magnitude in both Line 4 of Algorithm \ref{alg.AZO} and during the initialization of Subroutine \ref{sub.TSSA}. Choose constant $\beta<1$ or a sequence $\beta_k=\left(1-\frac{1}{k+2}\right)$, and the rest parameters follow the choices of Theorem \ref{thm.fosp}. Then Algorithm \ref{alg.AZO} with Option II avoids strict Riemannian saddle points almost surely. Furthermore, this implies that the Algorithm \ref{alg.AZO} with Option II asymptotically converges to a strict Riemannian second-order stationary point.
\end{theorem}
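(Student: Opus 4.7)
The plan is to interpret RAZGD with Option II as a deterministic iteration map $g : \M \to \M$ and to invoke the Center Stable Manifold Theorem to show that, for Lebesgue-almost every initial point $x_0 \in \M$, the orbit $\{g^n(x_0)\}$ cannot converge to any strict Riemannian saddle. Since Subroutine \ref{sub.TSSA} initializes $s_x^{-1}=s_x^0=0$ and uses a deterministic schedule $\mu\leftarrow\beta\mu$, no randomness enters the iteration, so the ``almost surely'' statement refers to the Riemannian volume on $\M$. The first step is to localize around a strict saddle $x^*$: by Assumption \ref{assum.lipschitz gradient} and the zeroth-order estimator error bound from Appendix \ref{sec.property estimator}, $\|g_x(0;\mu)\|<lB$ throughout some neighborhood of $x^*$ (once the constant magnitude of $\mu$ in Line~4 is small enough), so a single iteration of $g$ near $x^*$ coincides with one complete run of Subroutine \ref{sub.TSSA}.

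The core computation is to linearize this composite map in the chart induced by $\retr_{x^*}$. Writing $H=\nabla^2 \hat f_{x^*}(0)=\Hess f(x^*)$ (using $T_{x^*,0}=\mathrm{id}$ from Definition \ref{def.retr}) and $A=I-\eta H$, the first-order expansion of the TSSA recursion at $s=0$ is governed by the Nesterov block matrix
\begin{equation*}
M=\begin{pmatrix}(2-\theta)A & -(1-\theta)A\\ I & 0\end{pmatrix},
\end{equation*}
up to a discretization error that vanishes because $\mu_k\to 0$ forces $g_x(\cdot;\mu_k)\to\nabla\hat f_x(\cdot)$ uniformly on compact sets. Along the eigenspace of $H$ corresponding to its most negative eigenvalue $\lambda_*<0$, $A$ has eigenvalue $1-\eta\lambda_*>1$; a two-by-two spectral analysis of the $2\times 2$ block induced by $M$ on this eigenspace shows that the hypothesis $\eta>\theta/((2-\theta)\lambda_*)$, combined with $\theta\le -2\lambda_*/(4l-\lambda_*)$, forces at least one eigenvalue of $M$ to have modulus strictly greater than $1$, while $\eta\le 1/(4l)$ and $\theta\le 1$ ensure that every eigenvalue of $M$ is nonzero. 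Composing this linearization over the (finite) number of inner TSSA iterations that are actually executed near $x^*$ preserves both invertibility and the unstable direction, so $Dg(x^*)$ is invertible and possesses a non-trivial unstable subspace, and hence $g$ is a $C^1$ local diffeomorphism on a neighborhood of $x^*$.

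The Center Stable Manifold Theorem then produces, for each strict saddle $x^*$, a local center-stable submanifold $W^{cs}_{\mathrm{loc}}(x^*)\subset\M$ of positive codimension. The set of $x_0$ whose orbit converges to $x^*$ is contained in $\bigcup_{n\ge 0} g^{-n}(W^{cs}_{\mathrm{loc}}(x^*))$, a countable union of sets of Riemannian volume zero (since $g$ is a local diffeomorphism). A standard countable-covering argument over the saddle set then gives the first conclusion. The second conclusion follows because the monotone decrease of $f$ along the RZGDS branch together with the TSSA exit criterion, combined with $\mu_k\to 0$, guarantees that every limit point of $\{x_t\}$ is a genuine Riemannian critical point; almost-sure avoidance of strict saddles upgrades this to convergence to a strict Riemannian second-order stationary point. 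The main obstacle I foresee is the third step: bounding the number of inner TSSA iterations uniformly in a neighborhood of $x^*$ so that $g$ admits a single smooth representation there, since the termination test $k\sum_{j<k}\|s^{j+1}-s^j\|^2>B^2$ nominally depends on $x$. The way around it is to observe that near the saddle all inner iterates are $O(\|s^0\|)$ small and the test is $C^1$-stable under small perturbations of the starting point, so the inner length is locally constant and the fixed-power Jacobian analysis above applies verbatim.
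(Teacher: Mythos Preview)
Your approach treats the full outer iteration as a map $g:\M\to\M$ and applies the Center Stable Manifold Theorem to $g$ at a strict saddle $x^*$. The paper takes a different route: it applies the Stable Manifold Theorem to the \emph{inner} TSSA recursion, viewed as an autonomous dynamical system $(s,w,\mu)\mapsto(\psi(s,w,\mu),\beta\mu)$ on $\T_x\M\times\T_x\M\times\mathbb{R}$ with fixed point $(0,0,0)$. The augmentation by $\mu$ is not a technicality to be dismissed as ``discretization error that vanishes''; it is the device that makes the inner map autonomous while keeping the zeroth-order estimator honestly in the analysis, and the extra eigenvalue $\beta<1$ simply leaves the unstable direction of the Nesterov block $M$ intact. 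For the time-varying choice $\beta_k=1-\tfrac{1}{k+2}$ the inner system becomes non-autonomous and the paper switches to a Lyapunov--Perron fixed-point argument; your proposal does not touch this case at all.

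The obstacle you flag in your last paragraph is real, and your workaround does not dissolve it. In TSSA the initialization is $s^0=w^0=0$ deterministically, so the claim ``all inner iterates are $O(\|s^0\|)$ small'' is vacuous. What actually varies is the base point $x$, and at $x=x^*$ the first-order version of the inner loop would sit at $0$ forever; the only reason the zeroth-order loop moves at all is the nonzero initial $\mu$. Hence showing that TSSA terminates in a uniformly bounded (let alone locally constant) number of steps near $x^*$---which you need in order for $g$ to be a well-defined $C^1$ map there---is not a mild regularity lemma but essentially the saddle-avoidance statement itself. This circularity is the genuine gap: without first establishing that the inner dynamics escape the saddle (precisely what the paper's inner-level SMT/Lyapunov--Perron argument supplies), you cannot linearize the outer map $g$ at $x^*$ as your plan requires.
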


\begin{remark}
     In the tangent space step $\TSSA$, the smoothing parameter $\mu$ decreases in exponential rate, which makes the zeroth-order method almost identical to a first-order method. Despite this setting being convenient for theoretical analysis, the rapid decaying of smoothing parameters makes the algorithm less attractive from the zeroth-order optimization perspective. To reduce the rate of decaying of the smoothing parameter, we propose an alternating approach with a time-varying contracting factor, i.e., multiplying by a factor of $\left(1-\frac{1}{k+2}\right)$. It is obvious that the rate of decaying of the smoothing parameter in $\TSSA$ stage is $\mu_{k+1}=\frac{1}{3(k+2)}\mu$, which is much slower than the exponential decaying given by $\mu_{k+1}=\beta^k\mu$. Fortunately, the asymptotic avoidance of saddle points holds with $\beta_k=1-\frac{1}{k+2}$.
\end{remark}
\section{Numerical Experiments}
In this section, we conduct experiments to demonstrate the robustness and efficiency of RAZGD. Specifically, we implement the tangent space step (Subroutine \ref{sub.TSS}) due to its non-asymptotic complexity guarantee. All experiments are performed on a computer with a 24-core Intel Core i9-13900HX processor.

\subsection{Improved robustness}
To verify the robust performance, we consider the following quartic function \cite{lucchi2021second,zhang2022faster} on Euclidean space $\mathbb{R}^d$:
\begin{equation*}
f\left(x_1, x_2, \ldots, x_d, y\right)=\frac{1}{4} \sum_{i=1}^d x_i^4-y \sum_{i=1}^d x_i+\frac{d}{2} y^2
\end{equation*}
which has a strict saddle point at $x_0 = (0,\ldots,0)^{\top}$ and two global minima at $(1,\ldots,1)^{\top}$ and $(-1,\ldots,-1)^{\top}$.

In this experiment, we test Algorithm \ref{alg.AZO} with perturbation in the tangent space step (Perturbed-RAZGD) along with two Euclidean accelerated zeroth-order algorithms, ZO-Perturbed-AGD, and ZO-Perturbed-AGD-ANCF in \cite{zhang2022faster}. We choose the retraction as $\retr_x(s) = x+s$. The basic parameters for all three algorithms follow respective theorems. The smoothing parameter $\mu$ is set to $0.01$ for each algorithm, and notably, we run an additional choice of $\mu=0.3$ for our algorithm. The initial point is set as the saddle point $x_0$. Due to the inherent randomness in these algorithms, each algorithm is executed $10$ times and we report the averaged function value versus the averaged number of function queries in Figure \ref{fig:mu}. Figure \ref{fig:mu} demonstrates that the variance of our algorithm (indicated by the width of the shadow) is smaller than the other two with the same smoothing parameter $\mu = 0.01$. Furthermore, our algorithm still convergences even with a larger smoothing parameter $\mu = 0.3$. The two aspects visually showcase the robustness of our accelerated zeroth-order algorithm.
\begin{figure*}[h]
\centering
\subfigure[d=20]{\includegraphics[height=4cm,width=5cm]{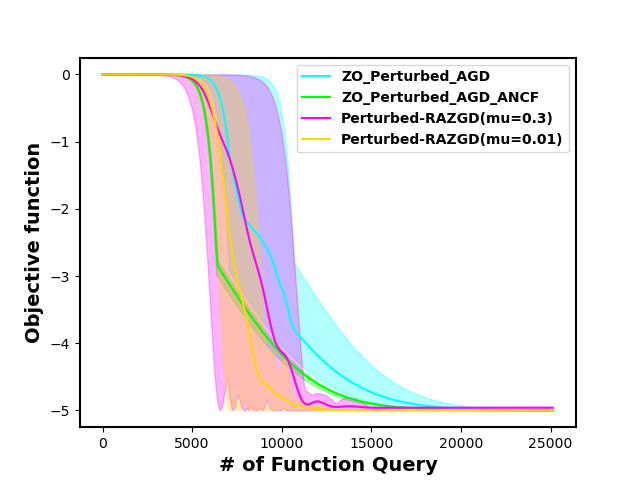}}
\subfigure[d=100]{\includegraphics[height=4cm,width=5cm]{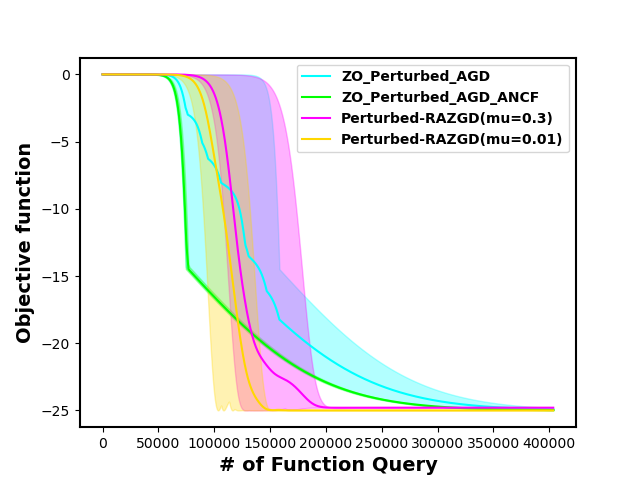}}
\subfigure[d=200]{\includegraphics[height=4cm,width=5cm]{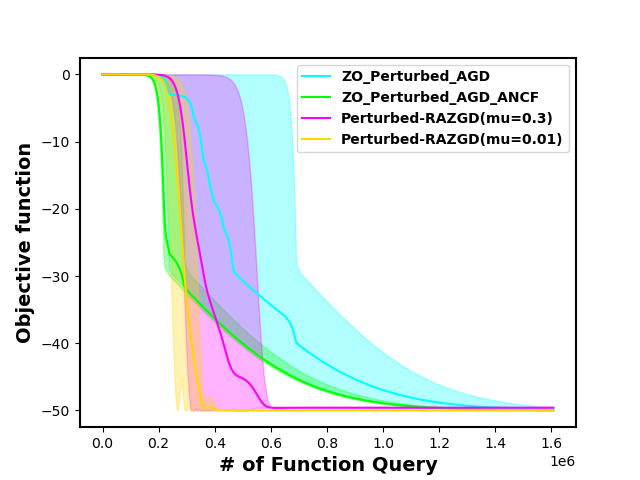}}
\caption{Performance of different zeroth-order accelerated algorithms to minimize the quartic function with growing dimensions. Confidence intervals show mini-max intervals over ten runs.}
\label{fig:mu}
\end{figure*}


\subsection{Lower function queries}\label{subsec.numerc}
In this part, we assess the acceleration effectiveness of the non-perturbed RAZGD with Option I by comparing it with Riemannian zeroth-order gradient descent (RZGD) and Euclidean projected zeroth-order gradient descent (PZGD). The corresponding pseudocodes are left in Appendix \ref{sec.RZOGD}. 

We first consider the simplex constrained least-square problem \cite{li2023simplex}
\begin{align*}
    &\min \quad \|Ax-b\|_2^2 \\
    &\text{s.t.} \quad \ x \in \Delta^{d-1},
\end{align*}
where $\Delta^{d-1}=\{x \in \mathbb{R}^d: \sum_{i=1}^d x_i=1 \text { and } x \geq 0\}$, $A \in \mathbb{R}^{m \times d}$ and $b \in \mathbb{R}^m$. Since the positive orthant is a Riemannian manifold with the Shahshahani metric, the simplex has a natural submanifold structure \cite{Shahshahani}. For any point $x$ in the interior of $\Delta^{d-1}$, the tangent space is the hyperplane passing through $0$ and parallel to $x \in \Delta^{d-1}$, i.e. $\T_x\Delta^{d-1} = \{s \in \mathbb{R}^d: \sum_{j=1}^d s_j = 0\}$. We use the exponential map on the Shahshahani manifold as the retraction \cite{feng2022accelerated}. A detailed discussion of Riemannian geometry of the simplex is deferred to Appendix \ref{GometryofSimplex}. In the experiment, the feature matrix $A$ is drawn from a standard Gaussian distribution, and the label vector $b$ is generated using the expression $A\zeta+\mu$. Here, $\zeta$ and $\mu$ are randomly sampled from a Gaussian distribution, with the additional constraint that the sum of all elements equals 1 for $\zeta$. For PZGD, we apply the projection in \cite{chen2011projection}. The results are reported in Figure \ref{fig:linear}, showing that RAZGD requires lower function queries. 
\begin{figure}[H]
\centering
\subfigure[m=200,d=20]{\includegraphics[height=3cm,width=4cm]{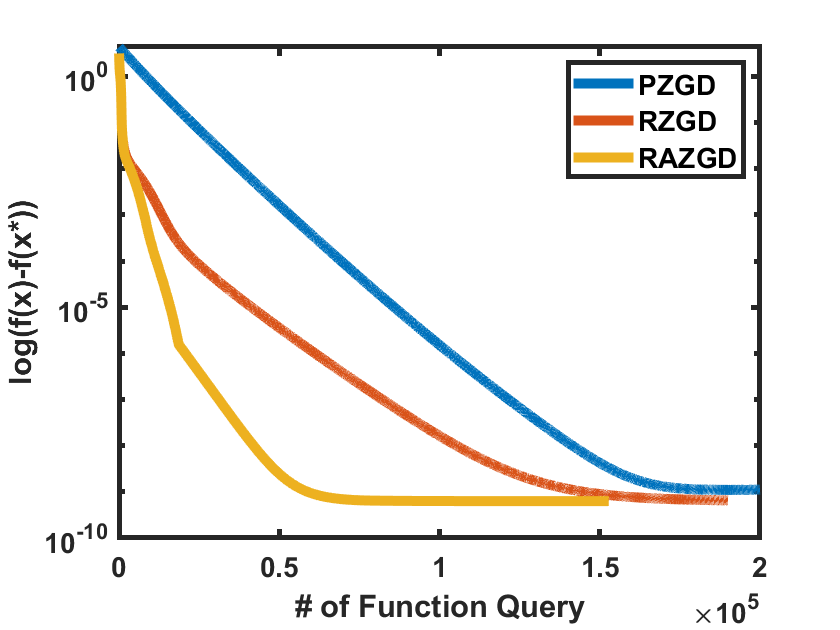}}
\subfigure[m=300,d=30]{\includegraphics[height=3cm,width=4cm]{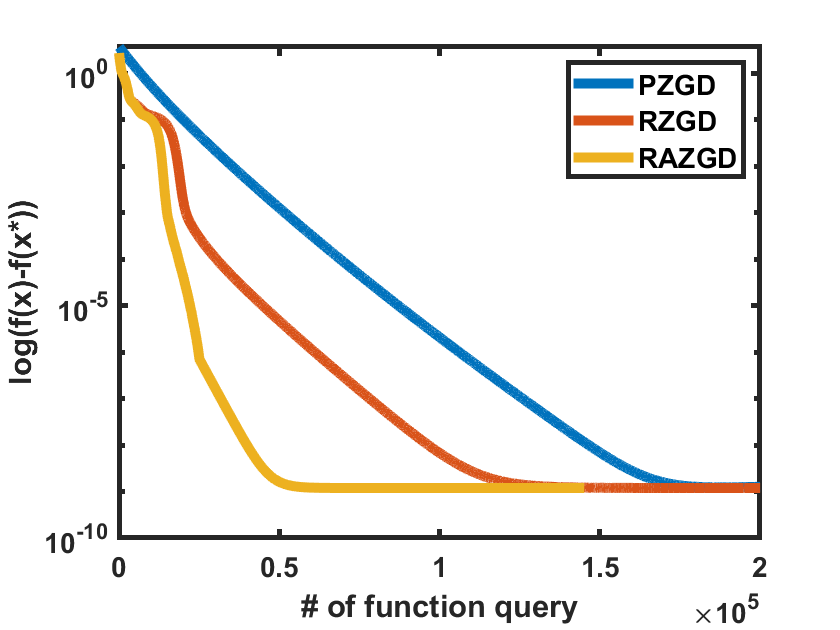}}
\caption{Performance on linear least-squares over the unit simplex with different problem sizes.}
\label{fig:linear}
\end{figure}

\begin{figure}[H]
\centering
\subfigure[Category:3 Dimension:10]{\includegraphics[height=3cm,width=4cm]{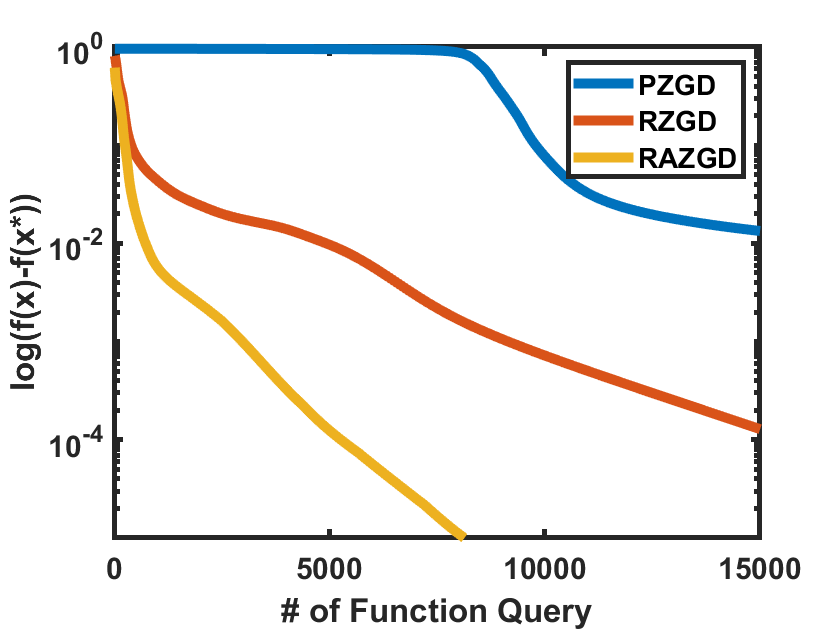}}
\subfigure[Category:2 Dimension:20]{\includegraphics[height=3cm,width=4cm]{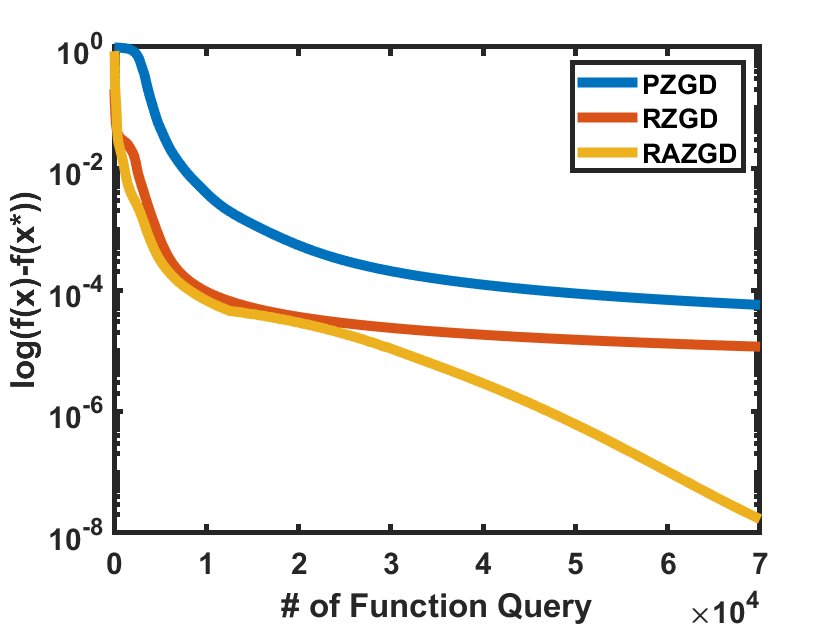}}
\vspace{-0.1cm}
\caption{Performance on empirical hypervolume under the sphere manifold with different categories and dimensions.}
\label{fig:SCOR}
\end{figure}
To further demonstrate the efficiency, we consider a real-world application: the optimal linear combination of continuous predictors in the context of a binary classification problem \cite{das2022estimating}. For multi-category responses, the optimal predictor combination can be obtained
by maximization of the empirical hypervolume under the manifold, with the following form
\begin{align*}
    &\max \quad f(x) \\
    &\text{s.t.} \quad \ \sum_{i=1}^d x_i^2=1, x \in \mathbb{R}^d,
\end{align*}
where the objective function $f(\cdot)$ takes no analytic form. We test algorithms on both two disease categories and three disease categories, and results are shown in Figure \ref{fig:SCOR}. For the unit sphere $\mathbb{S}^{d-1}$, the tangent space is defined as $\T_x\mathbb{S}^{d-1}:=\{s \in \mathbb{R}^d: \sum_{j=1}^d x_j s_j = 0\}$. In the experiment, the process of biomarker data generation is consistent with \cite{das2022estimating}. The retraction is chosen as $\retr_x(s) = (x+s)/\norm{x+s}_2$. For PZGD, we use $x/\|x\|_2$ as the projection to the unit sphere. It is worth mentioning that in practical scenarios, the lower function queries lead to less running time. Thus, RAZGD reaches the target accuracy within 30 seconds in both cases, while PZGD needs more than 300 seconds to achieve the same accuracy, indicating the effective performance of our accelerated algorithm.


\section{Conclusions}
In the paper, we introduce a Riemannian accelerated zeroth-order gradient descent based on the deterministic coordinate-wise zeroth-order estimator. Our accelerated algorithm attains the best-known function query complexity for achieving both $\epsilon$-approximate first-order and second-order stationary points respectively. Notably, it 
allows a larger
smoothing parameter
and thus demonstrates better robustness. Furthermore, we also establish the asymptotic convergence behavior with probability $1$. Experimental results are presented, verifying the superior performance in terms of both function query complexity and robustness.
\newpage
\section*{Acknowledgement}
We thank Huikang Liu (Shanghai Jiao Tong University) for several helpful discussions. This research is partially supported by the National Natural Science Foundation of China (Grants 72394360,72394364, 72394365, 72171141).

\section*{Potential Broader Impact}
This paper does not have any potential societal or ethical consequences.


\bibliography{main}
\bibliographystyle{icml2024}

\newpage
\appendix
\onecolumn

\section{Further Related work}\label{sec.related work}
\paragraph{Zeroth-order optimization on Riemannian manifolds.} Riemannian zeroth-order algorithms typically involve two key steps: constructing Riemannian zeroth-order estimators and applying them with standard optimization algorithms, such as Riemannian gradient descent. Through noisy evaluations of the objective function, \citet{li2023stochastic} studied the randomized zeroth-order estimators for the Riemannian gradient and Hessian, extending the Gaussian smoothing technique \cite{nesterov2017random, balasubramanian2022zeroth} onto the Riemannian manifold. Subsequently, \citet{wang2021greene} proposed an alternative zeroth-order gradient estimator based on the Greene–Wu convolution over Riemannian manifolds, demonstrating superior approximation quality compared to the approach by \citet{li2023stochastic}. When it comes to the Riemannian Hessian, \citet{wang2023sharp} introduced a novel Riemannian zeroth-order estimator that relies solely on constant function evaluations. Turning to Riemannian zeroth-order algorithms, \citet{fan2021learning} first proposed a Riemannian meta-optimization method that learns a gradient-free optimizer without theoretical guarantees. \citet{li2023stochastic} studied several zeroth-order algorithms for stochastic Riemannian optimization, presenting the first complexity results. Subsequently, they improved sample complexities by introducing zeroth-order Riemannian averaging stochastic approximation algorithms in \cite{li2023zeroth}. Moreover, \citet{maass2022tracking} studied the exploration of zeroth-order algorithms in the context of Riemannian online learning.

\paragraph{Acceleration on Riemannian manifolds.}  The main challenge in Riemannian optimization arises from the nonlinear structure of Riemannian manifolds, and two powerful techniques have been developed. The first involves leveraging trigonometric comparison inequalities \cite{zhang2016first,alimisis2021momentum}, while the second utilizes the tangent space step \cite{criscitiello2019efficiently,criscitiello2022accelerated}. In cases where the objective function is geodesically convex \cite{bishop1969manifolds,bridson2013metric}, a recent line of work \cite{liu2017accelerated,zhang2018towards,lin2020accelerated,alimisis2021momentum,jin2022understanding,kim2022accelerated} focused on generalizing Nesterov's accelerated update to Riemannian optimization, mirroring the well-known convergence result of accelerated gradient descent on Euclidean convex optimization. Moreover, outside the geodesic convexity, \citet{criscitiello2022accelerated} established the extension of Euclidean nonconvex acceleration techniques \cite{jin2018accelerated,carmon2018accelerated} to Riemannian manifolds, improving the convergence rate compared to Riemannian gradient descent \cite{boumal2019global,criscitiello2019efficiently}.

\section{Discussion of Assumptions}\label{sec.discussion assum}
In the paper, the Lipschitz-type continuity of the pullback function follows from previous works \cite{boumal2019global,agarwal2021adaptive,criscitiello2019efficiently,criscitiello2022accelerated}, and a detailed comparison of the parallel transport based Lipschitz continuity, such as
\begin{equation*}
    \left\|\grad f(x) - \Gamma_y^x \grad f(y)\right\| \le O(d_{\M}(x,y))
\end{equation*}
is provided in the textbook \cite{boumal2023introduction}, where $\Gamma_y^x: \T_y\M \rightarrow \T_x\M$ denotes parallel transport from $y$ to $x$ along any minimizing geodesic, and $d_{\M}(x,y)$ is the Riemannian distance. Since our interest is developing Riemannian zeroth-order algorithms, the Hessian Lipschitz continuity in Assumption \ref{assum.lipschitz hessian} is stronger compared to those in \cite{criscitiello2019efficiently} and \cite{criscitiello2022accelerated}. Whereas in the special case where $\M = \mathbb{R}^d$ and $\retr_x(s) = x+s$, both Assumptions \ref{assum.lipschitz gradient} and \ref{assum.lipschitz hessian} reduce to the standard Lipschitz continuity in Euclidean space.

For the well-behaved retraction mapping (Assumption \ref{assum.retraction}), when the sectional curvature and the covariant derivative of the Riemann curvature endomorphism are both bounded, exponential mapping ensures it holds (Theorem 2.7 in \cite{criscitiello2022accelerated}). For more details, readers can refer to \cite{agarwal2021adaptive,criscitiello2022accelerated, boumal2023introduction}.

\section{Auxiliary Lemmas}
We first list the concept of the adjoint of a linear operator, which is essential in bridging the differential and Hessian of a function on a manifold and their counterparts obtained by interplay with retraction map.
\begin{definition}
Let $E$ and $E'$ be two Euclidean spaces, with inner products $\langle,\rangle_a$ and $\langle,\rangle_b$ respectively. Let $A:E\rightarrow E'$ be a linear operator. The adjoint of $A$ is a linear operator $A^*:E'\rightarrow E$ defined by this property:
\[
\forall u\in E,v\in E',\ \ \ \langle A(u),v\rangle_b=\langle u,A^*(v)\rangle_a.
\]
In particular, if $A$ maps $E$ to $E$ equipped with an inner product $\langle,\rangle$ and 
\[
\forall u,v\in E,\ \ \ \langle A(u),v\rangle=\langle u,A(v)\rangle,
\]
this is, if $A=A^*$, we say $A$ is self-adjoint.
\end{definition}

Several useful lemmas and inequalities are presented below.
\begin{lemma}[Lemma 2.5 in \cite{criscitiello2022accelerated}]\label{lemma.pullback gradient hessian}
    For $f: \mathcal{M} \rightarrow \mathbb{R}$ twice continuously differentiable, $x \in \mathcal{M}$ and $s \in \mathrm{T}_x \mathcal{M}$, with $T_{x, s}^*$ denoting the adjoint of $T_{x, s}$,
    $$
    \nabla \hat{f}_x(s)=T_{x, s}^* \operatorname{grad} f\left(\operatorname{Retr}_x(s)\right), \quad \nabla^2 \hat{f}_x(s)=T_{x, s}^* \operatorname{Hess} f\left(\operatorname{Retr}_x(s)\right) T_{x, s}+W_s,
    $$
    where $T_{x,s}$ si the differential of $\retr_x$ at $s$ (a linear operator):
    \begin{equation*}
        T_{x,s} = \operatorname{D}\retr_x(s): \T_x\M \rightarrow \T_{\retr_x(s)}\M,
    \end{equation*}
    and $W_s$ is a self-adjoint linear operator on $\T_x \M$ defined through polarization by
    $$
    \left\langle W_s[\dot{s}], \dot{s}\right\rangle=\left\langle\operatorname{grad} f\left(\operatorname{Retr}_x(s)\right), \gamma_{x, s}^{\prime \prime}(0)\right\rangle,
    $$
    with $\gamma_{x, s}^{\prime \prime}(0) \in \mathrm{T}_{\operatorname{Retr}_x(s)} \mathcal{M}$ the intrinsic acceleration on $\mathcal{M}$ of $\gamma(\tau)=\operatorname{Retr}_x(s+\tau \dot{s})$ at $\tau=0$.
\end{lemma}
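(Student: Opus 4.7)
The plan is to verify both identities directly from the definitions of the Riemannian gradient, the Riemannian Hessian, and the Levi--Civita connection. Both equations are identities between tangent vectors (resp.\ self-adjoint operators) on $\T_x\M$, so it suffices to test them against an arbitrary $\dot{s}\in\T_x\M$ and conclude by polarization.

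For the gradient identity, I would apply the chain rule to $\hat{f}_x=f\circ\retr_x$ in the directional-derivative sense: for any $\dot{s}\in\T_x\M$,
\[
D\hat{f}_x(s)[\dot{s}]=Df(\retr_x(s))\bigl[T_{x,s}\dot{s}\bigr]=\bigl\langle \grad f(\retr_x(s)),\,T_{x,s}\dot{s}\bigr\rangle=\bigl\langle T_{x,s}^{*}\grad f(\retr_x(s)),\,\dot{s}\bigr\rangle.
\]
Because $\T_x\M$ is a Euclidean space and $D\hat{f}_x(s)[\dot{s}]=\langle\nabla\hat{f}_x(s),\dot{s}\rangle$, the first claim follows. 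This part is essentially a definition-chase and should be routine.

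For the Hessian identity, I would compute the quadratic form $\dot{s}\mapsto\langle\nabla^{2}\hat{f}_x(s)[\dot{s}],\dot{s}\rangle$ by evaluating $\tfrac{d^{2}}{d\tau^{2}}|_{\tau=0}\hat{f}_x(s+\tau\dot{s})$. Let $\gamma(\tau)=\retr_x(s+\tau\dot{s})$, so that $\gamma(0)=\retr_x(s)$, $\gamma'(0)=T_{x,s}\dot{s}$, and $\hat{f}_x(s+\tau\dot{s})=f(\gamma(\tau))$. Differentiating once gives $\tfrac{d}{d\tau}f(\gamma(\tau))=\langle\grad f(\gamma(\tau)),\gamma'(\tau)\rangle_{\gamma(\tau)}$. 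Differentiating again and invoking compatibility of the Levi--Civita connection with the metric yields
\[
\tfrac{d^{2}}{d\tau^{2}}\bigl|_{\tau=0}f(\gamma(\tau))=\bigl\langle \tfrac{D}{d\tau}\grad f(\gamma(\tau))\bigl|_{\tau=0},\gamma'(0)\bigr\rangle+\bigl\langle\grad f(\gamma(0)),\gamma''(0)\bigr\rangle.
\]
The key step is to recognize $\tfrac{D}{d\tau}\grad f(\gamma(\tau))|_{\tau=0}=\nabla_{\gamma'(0)}\grad f=\Hess f(\retr_x(s))[T_{x,s}\dot{s}]$ by the intrinsic definition of the Riemannian Hessian, and that $\gamma''(0)$ is exactly the intrinsic acceleration $\gamma_{x,s}''(0)$ in the statement. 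Pulling $T_{x,s}$ to the left via its adjoint gives the quadratic form of $T_{x,s}^{*}\Hess f(\retr_x(s))T_{x,s}+W_{s}$, and since both $\nabla^{2}\hat{f}_x(s)$ and the right-hand side are self-adjoint on $\T_x\M$, polarization upgrades the equality of quadratic forms to an equality of operators.

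The main obstacle is the careful bookkeeping at the Hessian step: one must distinguish the Euclidean Hessian $\nabla^{2}\hat{f}_x(s)$ on the vector space $\T_x\M$ from the Riemannian Hessian $\Hess f$ on $\M$, justify interchanging $\tfrac{d}{d\tau}$ with the metric compatibility identity when the inner product itself depends on $\tau$ through $\gamma(\tau)$, and confirm that the quadratic form $\dot{s}\mapsto\langle\grad f(\retr_x(s)),\gamma_{x,s}''(0)\rangle$ does define a self-adjoint operator $W_{s}$ via polarization (which uses the fact that $\gamma_{x,s}''(0)$ depends quadratically on $\dot{s}$, since the map $\dot s\mapsto \gamma_{x,s}''(0)$ comes from a second-order Taylor expansion of $\retr_x$ in $\dot s$). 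Once these points are settled, the two identities drop out.
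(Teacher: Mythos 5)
Your argument is correct: the chain-rule computation gives the gradient identity, and differentiating $\tau\mapsto f(\gamma(\tau))$ twice along $\gamma(\tau)=\retr_x(s+\tau\dot{s})$ with metric compatibility, identifying $\tfrac{D}{d\tau}\grad f(\gamma(\tau))\big|_{0}=\Hess f(\retr_x(s))[T_{x,s}\dot{s}]$ and $\gamma''(0)=\gamma_{x,s}''(0)$, then polarizing, gives the Hessian identity. The paper does not prove this lemma itself but imports it from \cite{criscitiello2022accelerated}, and your proof is essentially the same standard argument used there, including the correct observation that $\dot{s}\mapsto\gamma_{x,s}''(0)$ is quadratic in $\dot{s}$ so that $W_s$ is well defined and self-adjoint via polarization.
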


\begin{lemma}[Mechanism in \cite{li2022restarted}]\label{fact}
    For the tangent space step (Subroutine \ref{sub.TSS}), denote $\mK$ to be the iteration number when the ``if condition" on Line 7 triggers, i.e.
    \begin{equation*}
        \mK = \min_{k} \left\{k: k \sum_{j=0}^{k-1} \|s_x^{j+1} - s_x^{j}\|^2 > B^2 \right\}.  
    \end{equation*}
    Then for each $k = 0, 1, \cdots, \mK-1$, it holds that
    \begin{align*}
        &\|s_x^k - s_x^0\| \le B, \\
        &\|y_x^k - s_x^0\| \le 2B.
    \end{align*}
    When the "if condition" does not trigger, for all $k = 0, 1, \cdots, K$, it holds that
    \begin{align*}
        &\|s_x^k - s_x^0\| \le B, \\
        &\|y_x^k - s_x^0\| \le 2B.
    \end{align*}
\end{lemma}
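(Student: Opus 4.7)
The plan is to exploit the defining inequality $k\sum_{j=0}^{k-1}\|s_x^{j+1}-s_x^j\|^2\le B^2$ that is enforced before iteration $\mathcal{K}$, together with a telescoping identity plus Cauchy--Schwarz. Concretely, write
\[
s_x^k - s_x^0 \;=\; \sum_{j=0}^{k-1}\bigl(s_x^{j+1} - s_x^j\bigr),
\]
and square-norm both sides. By Cauchy--Schwarz,
\[
\|s_x^k - s_x^0\|^2 \;\le\; k\sum_{j=0}^{k-1}\|s_x^{j+1}-s_x^j\|^2.
\]
For any $k\le \mathcal{K}-1$, the minimality of $\mathcal{K}$ guarantees that the right-hand side is at most $B^2$, giving the first estimate $\|s_x^k-s_x^0\|\le B$. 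The same chain works verbatim for $k=0,1,\ldots,K$ whenever the break condition in Line~8 never fires.

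For the bound on $y_x^k$, I would use the algorithmic definition $y_x^k = s_x^k + (1-\theta)(s_x^k - s_x^{k-1})$ to split
\[
y_x^k - s_x^0 \;=\; \bigl(s_x^k - s_x^0\bigr) \;+\; (1-\theta)\bigl(s_x^k - s_x^{k-1}\bigr),
\]
and then apply the triangle inequality. The first summand is already bounded by $B$ from the previous step. For the consecutive-difference $\|s_x^k-s_x^{k-1}\|$, I would simply observe that this term appears inside the sum $\sum_{j=0}^{k-1}\|s_x^{j+1}-s_x^j\|^2$, hence
\[
\|s_x^k - s_x^{k-1}\| \;\le\; \Bigl(\sum_{j=0}^{k-1}\|s_x^{j+1}-s_x^j\|^2\Bigr)^{1/2} \;\le\; B/\sqrt{k}\quad (k\ge 1).
\]
Since $0<\theta<1$ and $B/\sqrt{k}\le B$ for $k\ge 1$, combining the two pieces yields $\|y_x^k-s_x^0\|\le B + B = 2B$.

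The only edge case that requires separate handling is $k=0$: by the initialization $s_x^{-1}=s_x^0$, we immediately get $y_x^0 = s_x^0$, so both stated bounds hold trivially. The argument when the ``if condition'' never triggers is identical, since the guard in the \textbf{while}-loop enforces exactly the same inequality $k\sum_{j=0}^{k-1}\|s_x^{j+1}-s_x^j\|^2\le B^2$ for every $k\le K$.

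There is no real obstacle here: the entire lemma is a mechanical consequence of the break criterion, Cauchy--Schwarz, and the triangle inequality. The one subtlety worth flagging in the write-up is being explicit about the index ranges (distinguishing ``the iteration that triggers the break'' from ``the last safe iterate''), and noting the trivial $k=0$ case so that the factor $B/\sqrt{k}$ does not create confusion.
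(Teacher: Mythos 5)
Your proof is correct and follows exactly the standard mechanism the paper invokes (it cites \cite{li2022restarted} rather than spelling it out): telescoping plus Cauchy--Schwarz against the break criterion for the $s_x^k$ bound, and the triangle inequality with the single-step bound $\|s_x^k-s_x^{k-1}\|\le B/\sqrt{k}$ for the $y_x^k$ bound. Nothing further is needed.
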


\begin{lemma}[Young's inequality]
If $a \geq 0$ and $b \geq 0$ are nonnegative real numbers and if $p>1$ and $q>1$ are real numbers such that $\frac{1}{p}+\frac{1}{q}=1$, then
$$
a b \leq \frac{a^p}{p}+\frac{b^q}{q} .
$$

Equality holds if and only if $a^p=b^q$. Specifically, for any $\epsilon > 0$, it holds that
$$
a b \leq \frac{a^2}{2 \varepsilon}+\frac{\varepsilon b^2}{2}.
$$
\end{lemma}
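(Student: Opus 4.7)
The plan is to prove Young's inequality by exploiting the convexity of the exponential function (equivalently, concavity of the logarithm), which is the cleanest route and also yields the equality characterization for free. First I would dispose of the boundary case: if $a = 0$ or $b = 0$, then $ab = 0$, while the right-hand side $a^p/p + b^q/q$ is nonnegative, so the inequality holds trivially (with equality iff $a = b = 0$, which is consistent with $a^p = b^q = 0$). So I may assume $a > 0$ and $b > 0$ throughout.

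Next, I would write the product $ab$ in exponential form:
\[
ab \;=\; \exp(\log a + \log b) \;=\; \exp\!\left(\tfrac{1}{p}\cdot p\log a + \tfrac{1}{q}\cdot q\log b\right),
\]
where the coefficients $1/p$ and $1/q$ are nonnegative and sum to $1$ by hypothesis. Since $\exp(\cdot)$ is convex on $\mathbb{R}$, Jensen's inequality applied to the two points $p\log a$ and $q\log b$ with weights $1/p$ and $1/q$ yields
\[
\exp\!\left(\tfrac{1}{p}\cdot p\log a + \tfrac{1}{q}\cdot q\log b\right) \;\le\; \tfrac{1}{p}\exp(p\log a) + \tfrac{1}{q}\exp(q\log b) \;=\; \frac{a^p}{p} + \frac{b^q}{q},
\]
which is precisely the desired inequality. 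For the equality characterization, I would invoke strict convexity of $\exp$: Jensen's inequality is strict unless the two points coincide, i.e.\ $p\log a = q\log b$, which is equivalent to $a^p = b^q$. Thus equality holds iff $a^p = b^q$ (including the boundary case $a=b=0$).

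For the specialized form with parameter $\varepsilon > 0$, I would simply apply the inequality in the case $p = q = 2$ after a rescaling: set $\tilde a = a/\sqrt{\varepsilon}$ and $\tilde b = \sqrt{\varepsilon}\,b$, so that $\tilde a \tilde b = ab$, and obtain
\[
ab \;=\; \tilde a \tilde b \;\le\; \frac{\tilde a^{\,2}}{2} + \frac{\tilde b^{\,2}}{2} \;=\; \frac{a^2}{2\varepsilon} + \frac{\varepsilon b^2}{2}.
\]

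There is essentially no main obstacle here; the only subtlety is choosing a one-line convexity argument that handles both the inequality and its equality condition simultaneously, rather than, say, using the tangent-line bound $t^p/p + 1/q \ge t$ (valid for $t \ge 0$), which would require an extra step to recover the equality case. The exponential/Jensen route is shorter and is the cleanest presentation.
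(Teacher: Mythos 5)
Your proof is correct: the boundary case, the Jensen/convexity-of-$\exp$ argument with weights $1/p,1/q$, the equality characterization via strict convexity, and the rescaling $\tilde a = a/\sqrt{\varepsilon}$, $\tilde b = \sqrt{\varepsilon}\,b$ with $p=q=2$ all go through. The paper itself states Young's inequality as a classical auxiliary fact without any proof, so there is no argument of the paper's to compare against; your write-up supplies a complete and standard derivation of both the general statement (with the equality condition) and the weighted two-term special case used throughout the paper's analysis.
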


\begin{lemma}[Minkowski's inequality]
Given $x_1, \ldots, x_n \in \mathbb{R}$ and $y_1, \ldots, y_n \in \mathbb{R}$, for any $p > 0$, it holds that 
$$
\left(\sum_{k=1}^n\left|x_k+y_k\right|^p\right)^{\frac{1}{p}} \leq\left(\sum_{k=1}^n\left|x_k\right|^p\right)^{\frac{1}{p}}+\left(\sum_{k=1}^n\left|y_k\right|^p\right)^{\frac{1}{p}}.
$$
    
\end{lemma}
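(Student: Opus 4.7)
The plan is to prove Minkowski's inequality by first handling the trivial case and then reducing the general case to Hölder's inequality, which in turn follows from Young's inequality (already stated in the excerpt). For $p=1$, the claim is immediate from the scalar triangle inequality $|x_k+y_k|\le|x_k|+|y_k|$ summed over $k$. For $p>1$, the main work lies in the next two steps.

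First, I would establish Hölder's inequality as an intermediate lemma: for conjugate exponents $p,q>1$ with $\tfrac{1}{p}+\tfrac{1}{q}=1$, and nonnegative reals $a_k,b_k$,
\[
\sum_{k=1}^n a_k b_k \;\le\; \left(\sum_{k=1}^n a_k^p\right)^{1/p}\!\left(\sum_{k=1}^n b_k^q\right)^{1/q}.
\]
The quickest route is to normalize $A=(\sum_k a_k^p)^{1/p}$ and $B=(\sum_k b_k^q)^{1/q}$ (handling $A=0$ or $B=0$ trivially), and apply Young's inequality termwise to obtain $(a_k/A)(b_k/B)\le \tfrac{1}{p}(a_k/A)^p+\tfrac{1}{q}(b_k/B)^q$. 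Summing over $k$ gives $\tfrac{1}{p}+\tfrac{1}{q}=1$ on the right, i.e.\ $\sum_k a_k b_k\le AB$, as desired.

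Second, the Minkowski bootstrap. I would write $|x_k+y_k|^p=|x_k+y_k|\cdot|x_k+y_k|^{p-1}$ and apply the scalar triangle inequality to the first factor, yielding
\[
\sum_{k=1}^n |x_k+y_k|^p \;\le\; \sum_{k=1}^n |x_k|\,|x_k+y_k|^{p-1}+\sum_{k=1}^n |y_k|\,|x_k+y_k|^{p-1}.
\]
Applying Hölder's inequality to each sum on the right with exponents $p$ and $q=p/(p-1)$, and noting that $(p-1)q=p$, each sum is bounded by $\bigl(\sum_k|x_k|^p\bigr)^{1/p}\bigl(\sum_k|x_k+y_k|^p\bigr)^{1/q}$ (and similarly with $y_k$). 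Factoring out $\bigl(\sum_k|x_k+y_k|^p\bigr)^{1/q}$ and dividing through by it (the case where this quantity vanishes is vacuous), and using $1-\tfrac{1}{q}=\tfrac{1}{p}$, gives the claimed inequality.

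The main obstacle is notational rather than conceptual: one must handle the degenerate case in which the common divisor vanishes, and be careful about the exponent hypothesis. As written, the paper says $p>0$, but the inequality as stated requires $p\ge 1$ (for $0<p<1$ the reverse inequality holds); since Minkowski is only invoked with $p\ge 1$ downstream, this looseness is harmless. No further ideas beyond Young and Hölder are needed, and both fit within the lemmas already available in the excerpt.
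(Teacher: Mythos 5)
Your proposal is correct: the Young--H\"older--bootstrap argument is the standard proof of Minkowski's inequality, and every step you outline (normalization for H\"older, splitting $|x_k+y_k|^p=|x_k+y_k|\,|x_k+y_k|^{p-1}$, dividing by $\bigl(\sum_k|x_k+y_k|^p\bigr)^{1/q}$ after handling the degenerate case) goes through. The paper itself states this lemma as a classical auxiliary fact without proof, so there is nothing to compare against; your observation that the hypothesis should read $p\ge 1$ rather than $p>0$ (the inequality reverses for $0<p<1$) is a valid catch, and harmless since the paper only invokes the lemma with $p\ge 1$.
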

\section{Approximation Error of the Estimator}\label{sec.property estimator}
\begin{lemma}\label{lemma.zero order estimator error}
    Suppose that Assumption \ref{assum.lipschitz gradient} and \ref{assum.lipschitz hessian} holds For any smoothing parameter $\mu \in (0, b)$ and $(x, s_x) \in \T\M$ satisfying $\|s_x\| \in \mathbb{B}_{x, b-\mu}(0)$, the Riemannian coordinate-wise zeroth-order estimator in Definition \ref{def.ZO estimators} satisfies
    \begin{equation*}
        \left\|g_x(s_x; \mu) - \nabla \hat f_x(s_x)\right\| \le \min\left\{\frac{l\mu\sqrt{d}}{2}, \frac{\rho\mu^2\sqrt{d}}{6}\right\}.
    \end{equation*}
\end{lemma}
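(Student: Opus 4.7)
The plan is to reduce this to the classical central-difference error bound applied coordinate-wise along the orthonormal basis $\{e_1,\dots,e_d\}$ of $\T_x\M$, and then aggregate via Pythagoras. The key observation is that since the $e_i$ form an orthonormal basis, the gradient decomposes as $\nabla\hat f_x(s_x)=\sum_{i=1}^d \langle\nabla\hat f_x(s_x),e_i\rangle\,e_i$, while by construction the $i$-th component of $g_x(s_x;\mu)$ is the central difference $\frac{\hat f_x(s_x+\mu e_i)-\hat f_x(s_x-\mu e_i)}{2\mu}$. Therefore
\[
\|g_x(s_x;\mu)-\nabla\hat f_x(s_x)\|^2=\sum_{i=1}^d\Bigl(\tfrac{\hat f_x(s_x+\mu e_i)-\hat f_x(s_x-\mu e_i)}{2\mu}-\langle\nabla\hat f_x(s_x),e_i\rangle\Bigr)^2,
\]
so the lemma follows once each summand is bounded by the corresponding per-coordinate quantity, yielding the $\sqrt d$ factor.

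Before bounding the individual coordinates, I would verify that all evaluation points used in the finite difference lie inside the ball on which Assumptions \ref{assum.lipschitz gradient} and \ref{assum.lipschitz hessian} apply. Since $\|s_x\|\le b-\mu$ and $\|e_i\|=1$, both $s_x\pm\mu e_i$ lie in $\mathbb{B}_{x,b}(0)$, which legitimizes invoking the Lipschitz-type bounds on the segment $\{s_x+te_i:t\in[-\mu,\mu]\}$.

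For the $\frac{l\mu}{2}$ bound, I would write the central difference via the fundamental theorem of calculus,
\[
\tfrac{\hat f_x(s_x+\mu e_i)-\hat f_x(s_x-\mu e_i)}{2\mu}-\langle\nabla\hat f_x(s_x),e_i\rangle=\tfrac{1}{2\mu}\int_{-\mu}^{\mu}\langle\nabla\hat f_x(s_x+te_i)-\nabla\hat f_x(s_x),e_i\rangle\,dt,
\]
and then apply Assumption \ref{assum.lipschitz gradient} together with Cauchy--Schwarz to bound the integrand by $l|t|$; integrating gives the per-coordinate error $\le l\mu/2$. For the sharper $\frac{\rho\mu^2}{6}$ bound, I would expand $\hat f_x(s_x\pm\mu e_i)$ by Taylor's theorem to second order around $s_x$; the symmetric difference cancels the function value and the quadratic Hessian term exactly, leaving only the third-order remainders, each of which is controlled by the Hessian-Lipschitz Assumption \ref{assum.lipschitz hessian} via the standard identity $R(\pm\mu)=\int_0^{\pm\mu}\tfrac{(\pm\mu-t)^2}{2}\langle(\nabla^2\hat f_x(s_x+te_i)-\nabla^2\hat f_x(s_x))e_i,e_i\rangle\,dt$, giving $|R_\pm|\le \rho\mu^3/6$, hence a per-coordinate error of at most $\rho\mu^2/6$.

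Finally, summing the squared per-coordinate errors yields $\|g_x(s_x;\mu)-\nabla\hat f_x(s_x)\|\le \sqrt d\cdot \min\{l\mu/2,\rho\mu^2/6\}$, which is the claim. No obstacle of substance is expected here: the argument is essentially a Taylor-remainder calculation, and the only care-points are (i) making sure the orthonormality of the chosen basis of $\T_x\M$ is recorded so that the decomposition into coordinates is isometric, and (ii) keeping the Taylor base-point and the evaluation perturbations inside the Lipschitz region $\mathbb{B}_{x,b}(0)$, both of which are handled by the hypothesis $s_x\in\mathbb{B}_{x,b-\mu}(0)$.
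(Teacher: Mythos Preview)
Your approach is correct and essentially identical to the paper's: coordinate-wise Taylor bounds followed by Pythagorean aggregation over the orthonormal basis of $\T_x\M$. One small correction to your integral remainder formula: under Hessian-Lipschitz (not $C^3$) the correct identity is $R_\pm=\int_0^{\pm\mu}(\pm\mu-t)\langle(\nabla^2\hat f_x(s_x+te_i)-\nabla^2\hat f_x(s_x))e_i,e_i\rangle\,dt$ with weight $(\pm\mu-t)$ rather than $(\pm\mu-t)^2/2$; this integrates against $\rho|t|$ to give exactly the $\rho\mu^3/6$ you state.
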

\begin{proof}
    First note that
    \begin{equation}\label{eq.estimator gradient lip}
        \begin{aligned}
    &\left| \hat f_x(s_x+\mu e_i) - \hat f_x(s_x-\mu e_i) - 2\mu \inner{\nabla \hat f_x(s_x)}{e_i}\right| \\
    = &\left|\left(\hat f_x(s_x+\mu e_i) - \hat f_x(s_x) - \mu \inner{\nabla \hat f_x(s_x)}{e_i}\right) - \left(\hat f_x(s_x-\mu e_i) - \hat f_x(s_x) + \mu \inner{\nabla \hat f_x(s_x)}{e_i}\right)\right| \\
    \le &\left|\hat f_x(s_x+\mu e_i) - \hat f_x(s_x) - \mu \inner{\nabla \hat f_x(s_x)}{e_i}\right| + \left|\hat f_x(s_x-\mu e_i) - \hat f_x(s_x) + \mu \inner{\nabla \hat f_x(s_x)}{e_i}\right| \\
    \le &l\mu^2,  
        \end{aligned}
    \end{equation}
    where the last inequality holds due to the Lipschitz continuity of $\nabla \hat f_x(\cdot)$ in Assumption \ref{assum.lipschitz gradient}. Consequently, we have
    \begin{align*}
       &\left\|g_x(s_x; \mu) - \hat \nabla f_x(s)\right\| \\
       =   &\left\|\sum_{i=1}^d \frac{\hat f_x(s_x+\mu e_i) - f_x(s_x-\mu e_i)}{2\mu}e_i - \sum_{i=1}^d \inner{\nabla \hat f_x(s_x)}{e_i} e_i\right\| \\
       = &\frac{1}{2\mu}\left\|\sum_{i=1}^d \left(\hat f_x(s_x+\mu e_i) - \hat f_x(s_x-\mu e_i) - 2\mu \inner{\nabla \hat f_x(s_x)}{e_i}\right)e_i\right\| \\
       = &\frac{1}{2\mu}\sqrt{\sum_{i=1}^d\left(\hat f_x(s_x+\mu e_i) - \hat f_x(s_x-\mu e_i) - 2\mu \inner{\nabla \hat f_x(s_x)}{e_i}\right)^2} \\
       \le &\frac{1}{2\mu} \sqrt{dl^2\mu^4} \\
       = &\frac{l\mu\sqrt{d}}{2}.
    \end{align*}
    Note that it also holds that
    \begin{align*}
       &\hat f_x(s_x+\mu e_i) - \hat f_x(s_x-\mu e_i) - 2\mu \inner{\nabla \hat f_x(s_x)}{e_i} \\
       = &\left(\hat f_x(s_x+\mu e_i) - \hat f_x(s_x) - \mu \inner{\nabla \hat f_x(s_x)}{e_i} - \frac{\mu^2}{2}\inner{\nabla^2 \hat f_x(s_x) e_i}{e_i}\right) \\
    - &\left(\hat f_x(s_x-\mu e_i) - \hat f_x(s_x) + \mu \inner{\nabla \hat f_x(s_x)}{e_i} - \frac{\mu^2}{2}\inner{\nabla^2 \hat f_x(s_x) e_i}{e_i}\right),
    \end{align*}
    and thus the same argument in \eqref{eq.estimator gradient lip} gives
    \begin{equation*}
        \left| \hat f_x(s_x+\mu e_i) - \hat f_x(s_x-\mu e_i) - 2\mu \inner{\nabla \hat f_x(s_x)}{e_i}\right| \le \frac{\rho\mu^3}{3}.
    \end{equation*}
    Similarly, we establish
    \begin{equation*}
        \left\|g_x(s_x; \mu) - \hat \nabla f_x(s)\right\| \le \frac{1}{2\mu}\sqrt{d\left(\frac{\rho\mu^3}{3}\right)^2} = \frac{\rho\mu^2\sqrt{d}}{6}.
    \end{equation*}
    Therefore, we can conclude
    \begin{equation*}
        \left\|g_x(s_x; \mu) - \nabla \hat f_x(s_x)\right\| \le \min\left\{\frac{l\mu\sqrt{d}}{2}, \frac{\rho\mu^2\sqrt{d}}{6}\right\}.
    \end{equation*}
\end{proof}
For simplicity, we use $\mathbf{E}(\mu) = \min\left\{\frac{l\mu\sqrt{d}}{2}, \frac{\rho\mu^2\sqrt{d}}{6}\right\}$ to represent the upper bound of approximation error of the Riemannian coordinate-wise zeroth-order estimator. This notation is widely used throughout the non-asymptotic convergence analysis. 

\section{Proofs of Non-asymptotic Convergence Analysis}\label{sec.proof non-asymptotic}
In the following non-asymptotic analysis, the magnitudes of parameters in RAZGD (Algorithm \ref{alg.AZO}) are set as:
\begin{equation}\label{eq.parameter setting}
    \eta = \frac{1}{4l}, ~B = \tilde{\mathcal{O}}\left(\epsilon^{\frac{1}{2}}\right), ~\theta = \mathcal{O}\left(\epsilon^{\frac{1}{4}}\right), ~ r = \mathcal{O}(\epsilon), ~ K =  \tilde{\mathcal{O}}\left(\epsilon^{-\frac{1}{4}}\right).
\end{equation}

\subsection{Riemannian zeroth-order gradient descent step}
For the iterate $x_t$ with a relatively large zeroth-order estimator, i.e. $\|g_{x_t}(0;\mu)\| \ge lB$, we show that the Riemannian zeroth-order gradient descent step (Subroutine \ref{sub.RZGD}) results in the function value decrease of $\mathcal{O}(B^2)$.

\begin{lemma}\label{lemma.function value decrease large gradient}
        Suppose that Assumption \ref{assum.lipschitz gradient} and \ref{assum.lipschitz hessian} hold. Under the parameter setting \eqref{eq.parameter setting}, choose a reasonably small $\mu$ such that $\mathbf{E}(\mu) \le \frac{lB}{2}$ in Algorithm \ref{alg.AZO}. Then, for the iterate $x_t$ satisfying $\|g_{x_t}(0; \mu)\| \ge lB$, we have:
    \begin{equation*}
        f(x_{t+1}) \le f(x_t) - \min\left\{\frac{lB^2}{16}, lb^2\right\}.
    \end{equation*}
\end{lemma}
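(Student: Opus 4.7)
The plan is to analyze $f(x_{t+1}) - f(x_t)$ by rewriting it as a pullback evaluation, then apply the standard descent inequality for $l$-smooth functions to the pullback $\hat f_{x_t}(\cdot)$, and finally reconcile two cases corresponding to the two branches of Subroutine \ref{sub.RZGD}.

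First I would write $s = -\alpha\eta g_{x_t}(0;\mu)$, where $\alpha = 1$ in the ``unclipped'' branch and $\alpha \in (0,1)$ with $\alpha\eta\|g_{x_t}(0;\mu)\| = b$ in the clipped branch. In both cases $\|s\| \le b$, so $s$ lies in the ball where Assumption \ref{assum.lipschitz gradient} applies, giving the descent inequality
\begin{equation*}
  f(x_{t+1}) = \hat f_{x_t}(s) \le f(x_t) + \langle \nabla \hat f_{x_t}(0), s\rangle + \frac{l}{2}\|s\|^2.
\end{equation*}
Next I would replace $\nabla \hat f_{x_t}(0)$ by $g_{x_t}(0;\mu) + (\nabla \hat f_{x_t}(0) - g_{x_t}(0;\mu))$ and invoke Lemma \ref{lemma.zero order estimator error}, which guarantees that the error vector has norm at most $\mathbf{E}(\mu) \le lB/2$. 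Since the branching assumption says $\|g_{x_t}(0;\mu)\| \ge lB$, the error is at most half of $\|g_{x_t}(0;\mu)\|$, so by Cauchy--Schwarz the cross term contributes at most $\tfrac{1}{2}\alpha\eta\|g_{x_t}(0;\mu)\|^2$. This yields
\begin{equation*}
  f(x_{t+1}) - f(x_t) \le -\alpha\eta\|g_{x_t}(0;\mu)\|^2\Bigl(\tfrac{1}{2} - \tfrac{l\alpha\eta}{2}\Bigr) \le -\tfrac{3}{8}\alpha\eta\|g_{x_t}(0;\mu)\|^2,
\end{equation*}
where the last step uses $\eta = 1/(4l)$ and $\alpha \le 1$ so that $l\alpha\eta \le 1/4$.

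At this point the two branches split. In the unclipped branch $\alpha = 1$ and $\|g_{x_t}(0;\mu)\| \ge lB$, giving a decrease of at least $\tfrac{3}{8}\eta (lB)^2 = \tfrac{3lB^2}{32} \ge \tfrac{lB^2}{16}$. In the clipped branch $\alpha\eta\|g_{x_t}(0;\mu)\| = b$, so $\alpha\eta\|g_{x_t}(0;\mu)\|^2 = b\|g_{x_t}(0;\mu)\|$; moreover being in this branch forces $\|g_{x_t}(0;\mu)\| > b/\eta = 4lb$, giving a decrease of at least $\tfrac{3}{8}\cdot 4lb^2 = \tfrac{3lb^2}{2} \ge lb^2$. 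Taking the minimum of the two branch bounds yields the claim.

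The only delicate point is ensuring the bias of the coordinate-wise estimator is dominated by the signal, which is exactly why the smoothing parameter is chosen so that $\mathbf{E}(\mu) \le lB/2$ matches the gate $\|g_{x_t}(0;\mu)\| \ge lB$; everything else reduces to the Euclidean smoothness argument transported through the retraction via the pullback function, so I expect no additional Riemannian-specific obstacle beyond verifying $\|s\| \le b$ so that Assumption \ref{assum.lipschitz gradient} is legitimately applicable.
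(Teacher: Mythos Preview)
Your proof is correct and follows essentially the same approach as the paper: apply the $l$-smooth descent inequality to the pullback $\hat f_{x_t}$, control the bias $\nabla\hat f_{x_t}(0)-g_{x_t}(0;\mu)$ via $\mathbf{E}(\mu)\le lB/2$, and split into the unclipped/clipped branches of Subroutine~\ref{sub.RZGD}. The only cosmetic difference is that the paper bounds the cross term via the polarization identity $-\langle a,b\rangle=-\tfrac12(\|a\|^2+\|b\|^2-\|a-b\|^2)$, whereas you use Cauchy--Schwarz together with $\mathbf{E}(\mu)\le\tfrac12\|g_{x_t}(0;\mu)\|$; both routes give the same constants up to slack already absorbed in the stated bound.
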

\begin{proof}
    First, consider the scenario where $\|g_{x_t}(0;\mu)\| \le \frac{b}{\eta}$; thus, $\eta g_{x_t}(0;\mu) \in \mathbb{B}_{x_t,b}(0)$, ensuring that local Lipschitz continuity holds. Based on Assumption \ref{assum.lipschitz gradient}, we have
    \begin{subequations}
        \begin{align}
            &f(x_{t+1}) \notag \\
            =  &f(\retr_{x_t}(-\eta g_{x_t}(0;\mu))) \notag \\
            =   &\hat f_{x_t}(-\eta g_{x_t}(0;\mu)) \notag \\
            \le &\hat f_{x_t}(0) -\eta \inner{\nabla \hat f_{x_t}(0)}{g_{x_t}(0;\mu)} + \frac{l\eta^2}{2}\norm{g_{x_t}(0;\mu)}^2 \notag \\
            =   &f(x_t) - \frac{\eta}{2}\left(\norm{\nabla \hat f_{x_t}(0)}^2 + \norm{g_{x_t}(0;\mu)}^2 - \norm{\nabla \hat f_{x_t}(0) - g_{x_t}(0;\mu)}^2\right) + \frac{l\eta^2}{2}\norm{g_{x_t}(0;\mu)}^2 \notag \\
            \le &f(x_t) - \frac{\eta}{2}\left(1 - l\eta\right)\norm{g_{x_t}(0;\mu)}^2 + \frac{\eta}{2} \mathbf{E}(\mu)^2. \notag
        \end{align}
    \end{subequations}
    Substituting $\eta = \frac{1}{4l}$, $\mathbf{E}(\mu) \le \frac{lB}{2}$ and $\|g_{x_t}(0; \mu)\| \ge lB$ gives that
    \begin{equation*}
        f(x_{t+1}) \le f(x_t) - \frac{lB^2}{16}.
    \end{equation*}
    For the extremely large estimator $\|g_{x_t}(0;\mu)\| \ge \frac{b}{\eta}$, similarly, it holds that
    \begin{subequations}
        \begin{align}
            &f(x_{t+1}) \notag \\
            =   &\hat f_{x_t}(-\alpha\eta g_{x_t}(0;\mu)) \notag \\
            \le &\hat f_{x_t}(0) -\alpha\eta \inner{\nabla \hat f_{x_t}(0)}{g_{x_t}(0;\mu)} + \frac{l\alpha^2\eta^2}{2}\norm{g_{x_t}(0;\mu)}^2 \notag \\
            =   &f(x_t) - \frac{\alpha\eta}{2}\left(\norm{\nabla \hat f_{x_t}(0)}^2 + \norm{g_{x_t}(0;\mu)}^2 - \norm{\nabla \hat f_{x_t}(0) - g_{x_t}(0;\mu)}^2\right) + \frac{l\alpha^2\eta^2}{2}\norm{g_{x_t}(0;\mu)}^2 \notag \\
            \le &f(x_t) - \frac{4lb^2}{2\alpha} + \frac{\alpha\eta}{2} \mathbf{E}(\mu)^2 + \frac{lb^2}{2} \label{subeq.alpha 1} \\
            \le  &f(x_t) - lb^2 \label{subeq.alpha 2},
        \end{align}
    \end{subequations}
    where we use $\alpha\|g_{x_t}(0;\mu)\| = \frac{b}{\eta}$ and $\eta = \frac{1}{4l}$ in \eqref{subeq.alpha 1}, and \eqref{subeq.alpha 2} holds because $\alpha < 1$ and $\mathbf{E}(\mu)^2 \le \frac{l^2B^2}{4} \le \frac{lb^2}{2}$. Therefore, we conclude
    \begin{equation*}
        f(x_{t+1}) \le f(x_t) - \min\left\{\frac{lB^2}{16}, lb^2\right\}.
    \end{equation*}
\end{proof}

\subsection{Tangent space step: function value decrease}
In this subsection, we establish that the tangent space step results in the function value decrease for the case when the "if condition" (Line 8 of Subroutine \ref{sub.TSS}) triggers. According to Lemma \ref{fact}, we know that when the "if condition" triggers, for each $k = 0, 1, \ldots, \mK-1$, $s_{x_t}^k \in \mathbb{B}_{x_t,b}(0)$ and $y_{x_t}^k \in \mathbb{B}_{x_t,b}(0)$. The following lemma states that $s_{x_t}^{\mK}$ stays within the ball $\mathbb{B}_{x_t,b}(0)$ as well, and thus, the local Lipschitz continuity holds for all iterates in the tangent space step.

\begin{lemma}\label{lemma.bound all tss iterate}
        Suppose that Assumption \ref{assum.lipschitz gradient} and \ref{assum.lipschitz hessian} hold. Under the parameter setting \eqref{eq.parameter setting}, choose a reasonably small $\mu$ such that $\mathbf{E}(\mu) \le \frac{lB}{2}$ holds in Algorithm \ref{alg.AZO}. Then, for the tangent space step at iterate $x_t$, when ``if condition" triggers, we have:
    \begin{equation*}
        \|\nabla \hat f_{x_t}(y_{x_t}^{\mK-1})\| \le 4lB, ~\text{and}~\|s_{x_t}^\mK - s_{x_t}^0\| \le 4B.
    \end{equation*}
\end{lemma}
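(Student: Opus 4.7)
The plan is to first establish the gradient bound at $y_{x_t}^{\mK-1}$ by combining (i) the ``else'' branch condition $\|g_{x_t}(0;\mu)\| < lB$ that triggered the tangent space step in the first place, (ii) the approximation bound $\mathbf{E}(\mu) \le lB/2$ from Lemma E.1 and the hypothesis, and (iii) the Lipschitz continuity of $\nabla \hat f_{x_t}$ transported from $0$ to $y_{x_t}^{\mK-1}$. Then the bound on $\|s_{x_t}^\mK - s_{x_t}^0\|$ follows by substituting the gradient estimate into the single update rule $s_{x_t}^{\mK} = y_{x_t}^{\mK-1} - \eta g_{x_t}(y_{x_t}^{\mK-1};\mu)$.

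First I would invoke Lemma F.2 (the mechanism of Li et al.) to deduce that, for all indices $k = 0, 1, \ldots, \mK-1$, the iterates satisfy $\|s_{x_t}^k - s_{x_t}^0\| \le B$ and $\|y_{x_t}^k - s_{x_t}^0\| \le 2B$. Since $\|s_{x_t}^0\| \le r$ with $r$ negligible compared to $B$ under the parameter setting \eqref{eq.parameter setting} (so that $2B + r \le b$ for sufficiently small $\epsilon$), this places $y_{x_t}^{\mK-1}$ squarely inside the local Lipschitz ball $\mathbb{B}_{x_t,b}(0)$ on which Assumption \ref{assum.lipschitz gradient} applies.

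Next, I would bound $\|\nabla \hat f_{x_t}(y_{x_t}^{\mK-1})\|$ by a triangle-inequality chain through $0$: Lipschitz continuity gives $\|\nabla \hat f_{x_t}(y_{x_t}^{\mK-1}) - \nabla \hat f_{x_t}(0)\| \le l \|y_{x_t}^{\mK-1}\| \le l(2B + r)$, while the ``else'' branch guarantees $\|g_{x_t}(0;\mu)\| < lB$ so that Lemma E.1 yields $\|\nabla \hat f_{x_t}(0)\| \le \|g_{x_t}(0;\mu)\| + \mathbf{E}(\mu) < 3lB/2$. Summing and absorbing the negligible $lr$ term gives $\|\nabla \hat f_{x_t}(y_{x_t}^{\mK-1})\| \le 4lB$, as desired. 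For the iterate bound, I expand the update rule and apply the triangle inequality:
\begin{equation*}
\|s_{x_t}^\mK - s_{x_t}^0\| \le \|y_{x_t}^{\mK-1} - s_{x_t}^0\| + \eta\|g_{x_t}(y_{x_t}^{\mK-1};\mu)\| \le 2B + \eta\bigl(4lB + \tfrac{lB}{2}\bigr).
\end{equation*}
Substituting $\eta = 1/(4l)$ gives $\|s_{x_t}^\mK - s_{x_t}^0\| \le 2B + 9B/8 \le 4B$.

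The main obstacle, in my view, is bookkeeping rather than conceptual: one must carefully track that the reference point in Lemma F.2 is $s_{x_t}^0$ (which may be a small perturbation $\xi$ in the perturbed variant) rather than $0$, and verify that $r$ in the parameter setting \eqref{eq.parameter setting} is small enough relative to $B$ that the loss from translating the Lipschitz chain between $0$ and $s_{x_t}^0$ is absorbed cleanly into the numerical constants $4lB$ and $4B$. Once this scaling check is done, the argument is a direct chaining of the three ingredients above and requires no further input.
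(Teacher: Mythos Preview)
Your proposal is correct and follows essentially the same route as the paper's proof: both combine the else-branch bound $\|g_{x_t}(0;\mu)\| < lB$, the estimator error $\mathbf{E}(\mu)\le lB/2$, Lipschitz continuity of $\nabla\hat f_{x_t}$, and the mechanism bound $\|y_{x_t}^{\mK-1}-s_{x_t}^0\|\le 2B$, then plug into the update rule with $\eta=1/(4l)$. The only cosmetic difference is that the paper routes the triangle inequality through $s_{x_t}^0$ (first bounding $\|\nabla\hat f_{x_t}(s_{x_t}^0)\|\le 2lB$) whereas you route directly through $0$; the arithmetic lands on the same $25B/8\le 4B$.
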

\begin{proof}
    By the mechanism of Algorithm \ref{alg.AZO}, we know that the zeroth-order estimator $g_{x_t}(0;\mu)$ satisfies $\|g_{x_t}(0;\mu)\| \le lB$. Recall $s_{x_t}^0 = \xi_t \sim \Uniform\left(\mathbb{B}_{x_t, r}(0)\right)$, we have
    \begin{subequations}
        \begin{align}
            &\|\nabla \hat f_{x_t}(s_{x_t}^0)\| \notag \\
            \le &\|\nabla \hat f_{x_t}(s_{x_t}^0) - \nabla \hat f_{x_t}(0)\| + \|\nabla \hat f_{x_t}(0) - g_{x_t}(0;\mu)\| + \|g_{x_t}(0;\mu)\| \notag \\
            \le &l \cdot \|\xi_t\| + \mathbf{E}(\mu) + lB \notag \\
            \le &2lB, \notag
        \end{align}
    \end{subequations}
    where the last inequality uses $\|\xi_t\| = r = \mathcal{O}(\epsilon) \le \frac{B}{2}$. Therefore, we could upper bound $\|\nabla \hat f_{x_t}(y_{x_t}^{\mK-1})\|$ as
    \begin{equation*}
        \|\nabla \hat f_{x_t}(y_{x_t}^{\mK-1})\| \le \|\nabla \hat f_{x_t}(y_{x_t}^{\mK-1}) - \nabla \hat f_{x_t}(s_{x_t}^{0})\| + \|\nabla \hat f_{x_t}(s_{x_t}^0)\| \le l\|y_{x_t}^{\mK-1} - s_{x_t}^{0}\| + 2lB \le 4lB.
    \end{equation*}
    Since $s_{x_t}^\mK = y_{x_t}^{\mK-1} - \eta g_{x_t}(y_{x_t}^{\mK-1};\mu)$, it follows that
    \begin{subequations}
        \begin{align}
            &\|s_{x_t}^\mK - s_{x_t}^0\| \notag \\
            \le &\|s_{x_t}^\mK - y_{x_t}^{\mK - 1}\| + \|y_{x_t}^{\mK - 1} - s_{x_t}^{0}\| \notag \\
            \le &\eta \|g_{x_t}(y_{x_t}^{\mK-1};\mu)\| + 2B \notag \\
            \le &\eta \|g_{x_t}(y_{x_t}^{\mK-1};\mu) - \nabla \hat f_{x_t}(y_{x_t}^{\mK-1})\| + \eta \|\nabla \hat f_{x_t}(y_{x_t}^{\mK-1})\| + 2B \notag \\
            \le &\eta \mathbf{E}(\mu) + \eta \cdot 4lB + 2B \notag \\
            \le &4B, \notag
        \end{align}
    \end{subequations}
    where the last inequality holds due to that $\eta = \frac{1}{4l}$ and $\mathbf{E}(\mu) \le \frac{lB}{2}$.
\end{proof}

To establish the function value decrease in the tangent space step, we mimic the proof strategy in \cite{li2022restarted}. First note that $\nabla^2 \hat f_{x_t}(s_t^0)$ is self-adjoint, there exists a basis of eigenvectors $\{u_j\}_{j=1}^d$ satisfying
\begin{equation*}
     \nabla^2 \hat f_{x_t}(s_t^0) u_j = \lambda_j u_j,
\end{equation*}
where $\lambda_1, \cdots, \lambda_d$ are associated eigenvalues. Based on the basis $\{u_j\}_{j=1}^d$ and local coordinate $\{e_j\}_{j=1}^d$ of tangent space $\T_{x_t}\M$, we introduce the following notations for any given $s_{x_t}, \nabla \hat f_{x_t}(\cdot) \in \T_{x_t}\M$:
\begin{align*}
    &\tilde s_{x_t,j} = \inner{s_{x_t}}{u_j},~s_{x_t,j} = \inner{s_{x_t}}{e_j}, ~j = 1, \cdots, d, \\
    &\tilde \nabla_j \hat f_{x_t}(\cdot) = \inner{\nabla \hat f_{x_t}(\cdot)}{u_j},~\nabla_j \hat f_{x_t}(\cdot) = \inner{\nabla \hat f_{x_t}(\cdot)}{e_j}, ~j = 1, \cdots, d.
\end{align*}
Therefore, it holds that
\begin{align*}
    &s_{x_t} = \sum_{j=1}^d \tilde s_{x_t,j} u_j = \sum_{j=1}^d s_{x_t,j} e_j, \\
    &\|s_{x_t}\|^2 = \sum_{j=1}^d |\tilde s_{x_t,j}|^2 = \sum_{j=1}^d |s_{x_t,j}|^2,\\
    &\hat f_{x_t}(\cdot) = \sum_{j=1}^d \tilde \nabla_j \hat f_{x_t}(\cdot) u_j = \sum_{j=1}^d \nabla_j \hat f_{x_t}(\cdot) e_j,\\
    &\|\hat f_{x_t}(\cdot)\|^2 = \sum_{j=1}^d |\tilde \nabla_j \hat f_{x_t}(\cdot)|^2 = \sum_{j=1}^d |\nabla_j \hat f_{x_t}(\cdot)|^2.
\end{align*}
\begin{lemma}\label{lemma.tss function value with h}
    Suppose that Assumption \ref{assum.lipschitz gradient} and \ref{assum.lipschitz hessian} hold. Under the parameter setting \eqref{eq.parameter setting}, choose a reasonably small $\mu$ such that $\mathbf{E}(\mu) \le \frac{lB}{2}$ in Algorithm \ref{alg.AZO}. Then, for the tangent space step at iterate $x_t$, when the ``if condition" triggers, we have
    \begin{equation*}
        \hat f_{x_t}(s_{x_t}^\mK) \le \hat f_{x_t}(s_{x_t}^0) + \frac{32\rho B^3}{3} + \sum_{j=1}^d h_j(\tilde s_{x_t,j}^\mK),
    \end{equation*}
    where
    \begin{equation*}
        h_j(z) = \inner{\tilde \nabla_j \hat f_{x_t}(s_{x_t}^0)}{z - \tilde s_{x_t,j}^0} + \frac{\lambda_j}{2}(z - \tilde s_{x_t,j}^0)^2, \ j = 1, \ldots, d.
    \end{equation*}
    are one-dimensional quadratic functions.
\end{lemma}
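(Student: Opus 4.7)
The target inequality is a second-order Taylor expansion of $\hat f_{x_t}$ around $s_{x_t}^0$ evaluated at $s_{x_t}^\mK$, with the cubic remainder absorbed into the additive term $\tfrac{32\rho B^3}{3}$ and the quadratic part rewritten coordinate-by-coordinate in the eigenbasis of $\nabla^2 \hat f_{x_t}(s_{x_t}^0)$. The plan is to (i) verify we stay in the Lipschitz region so we may invoke Assumption \ref{assum.lipschitz hessian}, (ii) apply the standard cubic Taylor upper bound, (iii) bound the displacement $\|s_{x_t}^\mK - s_{x_t}^0\|$ by $4B$ via Lemma \ref{lemma.bound all tss iterate}, and (iv) diagonalize the quadratic form using $\{u_j\}_{j=1}^d$ to recognize the sum of $h_j(\tilde s_{x_t,j}^\mK)$.

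First I would observe that by Lemma \ref{lemma.bound all tss iterate}, once the ``if condition'' triggers we have $\|s_{x_t}^\mK - s_{x_t}^0\| \le 4B$, and by the parameter choice $B = \tilde{\mathcal{O}}(\epsilon^{1/2})$ this quantity is at most $b$ for small $\epsilon$; together with the bound $\|s_{x_t}^0\| = \|\xi_t\| \le r$ from initialization, we have $s_{x_t}^0, s_{x_t}^\mK \in \mathbb{B}_{x_t,b}(0)$, so Assumption \ref{assum.lipschitz hessian} is in force along the segment between them. The cubic remainder form of Taylor's theorem for a function with $\rho$-Lipschitz Hessian then gives
\begin{equation*}
\hat f_{x_t}(s_{x_t}^\mK) \le \hat f_{x_t}(s_{x_t}^0) + \langle \nabla \hat f_{x_t}(s_{x_t}^0), s_{x_t}^\mK - s_{x_t}^0\rangle + \tfrac{1}{2}\langle \nabla^2 \hat f_{x_t}(s_{x_t}^0)(s_{x_t}^\mK - s_{x_t}^0), s_{x_t}^\mK - s_{x_t}^0\rangle + \tfrac{\rho}{6}\|s_{x_t}^\mK - s_{x_t}^0\|^3.
\end{equation*}

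Substituting $\|s_{x_t}^\mK - s_{x_t}^0\| \le 4B$ yields $\tfrac{\rho}{6}\|s_{x_t}^\mK - s_{x_t}^0\|^3 \le \tfrac{\rho}{6}(4B)^3 = \tfrac{32\rho B^3}{3}$, which matches the additive constant in the claim. For the first-order and quadratic pieces, I would expand $s_{x_t}^\mK - s_{x_t}^0 = \sum_{j=1}^d (\tilde s_{x_t,j}^\mK - \tilde s_{x_t,j}^0)\,u_j$ in the eigenbasis $\{u_j\}$ of $\nabla^2 \hat f_{x_t}(s_{x_t}^0)$. Using orthonormality of $\{u_j\}$ together with $\nabla^2 \hat f_{x_t}(s_{x_t}^0)u_j = \lambda_j u_j$, the gradient inner product separates as $\sum_j \tilde\nabla_j \hat f_{x_t}(s_{x_t}^0)(\tilde s_{x_t,j}^\mK - \tilde s_{x_t,j}^0)$ and the quadratic form diagonalizes to $\tfrac{1}{2}\sum_j \lambda_j (\tilde s_{x_t,j}^\mK - \tilde s_{x_t,j}^0)^2$, so the sum of the two is exactly $\sum_{j=1}^d h_j(\tilde s_{x_t,j}^\mK)$. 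Combining this with the cubic bound completes the proof.

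The argument is essentially mechanical once the correct region is identified. The only step requiring a touch of care is the verification that $s_{x_t}^\mK$, and indeed the whole segment from $s_{x_t}^0$ to $s_{x_t}^\mK$, lies inside $\mathbb{B}_{x_t,b}(0)$; this is what legitimizes the use of the Hessian-Lipschitz cubic Taylor bound. The eigenbasis decomposition is purely algebraic and poses no real obstacle.
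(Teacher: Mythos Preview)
Your proposal is correct and follows essentially the same route as the paper: apply the Hessian-Lipschitz cubic Taylor bound at $s_{x_t}^0$, control the cubic remainder via $\|s_{x_t}^\mK - s_{x_t}^0\| \le 4B$ from Lemma~\ref{lemma.bound all tss iterate}, and then diagonalize the linear-plus-quadratic part in the eigenbasis $\{u_j\}$ to recover $\sum_j h_j(\tilde s_{x_t,j}^\mK)$. The only addition in your write-up is the explicit check that the iterates remain in $\mathbb{B}_{x_t,b}(0)$, which the paper handles implicitly via the preceding discussion and Lemma~\ref{lemma.bound all tss iterate}.
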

\begin{proof}
    From the Hessian Lipschitz continuity (Assumption \ref{assum.lipschitz hessian}), we have
    \begin{subequations}
    \begin{align}
        &\hat f_{x_t}(s_{x_t}^\mK) \notag \\
        \le &\hat f_{x_t}(s_{x_t}^0) + \inner{\nabla \hat f_{x_t}(s_{x_t}^0)}{s_{x_t}^\mK - s_{x_t}^0} + \frac{1}{2}\inner{\nabla^2 \hat f_{x_t}(s_{x_t}^0)(s_{x_t}^\mK - s_{x_t}^0)}{s_{x_t}^\mK - s_{x_t}^0} + \frac{\rho}{6}\|s_{x_t}^\mK - s_{x_t}^0\|^3 \notag \\
        \le &\hat f_{x_t}(s_{x_t}^0) + \inner{\nabla \hat f_{x_t}(s_{x_t}^0)}{s_{x_t}^\mK - s_{x_t}^0} + \frac{1}{2}\inner{\nabla^2 \hat f_{x_t}(s_{x_t}^0)(s_{x_t}^\mK - s_{x_t}^0)}{s_{x_t}^\mK - s_{x_t}^0} + \frac{32\rho B^3}{3}, \label{subeq.9a} \\
        = &\hat f_{x_t}(s_{x_t}^0)  + \frac{32\rho B^3}{3} + \sum_{j=1}^d \inner{\tilde \nabla_j \hat f_{x_t}(s_{x_t}^0)}{\tilde s_{x_t,j}^\mK - \tilde s_{x_t,j}^0} + \frac{\lambda_j}{2}(\tilde s_{x_t,j}^\mK - \tilde s_{x_t,j}^0)^2 \label{subeq.9b} \\
        = &\hat f_{x_t}(s_{x_t}^0)  + \frac{32\rho B^3}{3} + \sum_{j=1}^d h_j(\tilde s_{x_t,j}^\mK), \notag
    \end{align}
\end{subequations}
where \eqref{subeq.9a} comes from Lemma \ref{lemma.bound all tss iterate}, and \eqref{subeq.9b} holds because $\{u_j\}_{j=1}^d$ forms a standard basis of the tangent space $\T_{x_t}\M$.
\end{proof}
The above lemma indicates that it is sufficient to analyze the behavior of one-dimensional quadratic functions $h_j(\tilde s_{x_t,j}^\mK)$, $j = 1, \ldots, d$. Recall the $k$-th update in the tangent space step at iterate $x_t$:
\begin{align*}
    &y_{x_t}^k = s_{x_t}^k + (1 - \theta) (s_{x_t}^k - s_{x_t}^{k-1}), \\
    &s_{x_t}^{k+1} = y_{x_t}^k - \eta g_{x_t}(y_{x_t}^k; \mu).
\end{align*}
It equivalents as
\begin{equation}\label{eq.agd for h}
    \begin{aligned}
    &\tilde y_{x_t, j}^k = \tilde s_{x_t, j}^k + (1 - \theta) (\tilde s_{x_t, j}^k - \tilde s_{x_t, j}^{k-1}), \\
    &\tilde s_{x_t, j}^{k+1} = \tilde y_{x_t, j}^k - \eta \nabla h_j (\tilde y_{x_t, j}^k) - \eta \mE_{x_t, j}^k,
\end{aligned}
\end{equation}
where 
\begin{align*}
    \mE_{x_t, j}^k = \inner{g_{x_t}(y_{x_t}^k; \mu)}{u_j} - \nabla h_j (\tilde y_{x_t, j}^k) := \tilde g_{x_t, j}(y_{x_t}^k; \mu) - \nabla h_j (\tilde y_{x_t, j}^k).
\end{align*}
for all $j = 1, \cdots, d$. As \citet{li2022restarted} pointed out, the $k$-th update in the tangent space step can be viewed as applying inexact accelerated gradient descent to $h_j(\cdot)$ with the error $\mE_{x_t, j}^k$. The following lemma describes the error that could be controlled.
\begin{lemma}\label{lemma.upper bound of mE}
        Suppose that Assumption \ref{assum.lipschitz gradient} and \ref{assum.lipschitz hessian} hold. Under the parameter setting \eqref{eq.parameter setting}, choose a reasonably small $\mu$ such that $\mathbf{E}(\mu) \le \frac{lB}{2}$ in Algorithm \ref{alg.AZO}. Then, for the tangent space step at iterate $x_t$, the error $\mE_{x_t, j}^k$ in update \eqref{eq.agd for h} satisfies
    \begin{equation*}
        \sum_{j=1}^d |\mE_{x_t, j}^0|^2 \le \mathbf{E}(\mu)^2, \ \text{and} \ \sum_{j=1}^d |\mE_{x_t, j}^k|^2 \le 2\mathbf{E}(\mu)^2 + 8\rho^2 B^4, ~\forall k \ge 1.
    \end{equation*}
\end{lemma}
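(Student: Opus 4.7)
The plan is to rewrite $\mE_{x_t,j}^k$ as the coordinate-wise error (along eigenvector $u_j$) of a quadratic model approximation to $\nabla\hat f_{x_t}(y_{x_t}^k)$, square and sum, and then split the error into (i) the zeroth-order estimation error controlled by Lemma \ref{lemma.zero order estimator error} and (ii) the Taylor remainder controlled by Hessian Lipschitz continuity (Assumption \ref{assum.lipschitz hessian}).

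Concretely, since $h_j(z) = \langle \tilde\nabla_j \hat f_{x_t}(s_{x_t}^0), z-\tilde s_{x_t,j}^0\rangle + \tfrac{\lambda_j}{2}(z - \tilde s_{x_t,j}^0)^2$, differentiating gives $\nabla h_j(\tilde y_{x_t,j}^k) = \langle \nabla\hat f_{x_t}(s_{x_t}^0) + \nabla^2 \hat f_{x_t}(s_{x_t}^0)(y_{x_t}^k - s_{x_t}^0),\, u_j\rangle$. Since $\{u_j\}_{j=1}^d$ is an orthonormal basis of $\T_{x_t}\M$, summing $|\mE_{x_t,j}^k|^2$ over $j$ yields
\begin{equation*}
\sum_{j=1}^d |\mE_{x_t,j}^k|^2 = \bigl\| g_{x_t}(y_{x_t}^k;\mu) - \nabla\hat f_{x_t}(s_{x_t}^0) - \nabla^2 \hat f_{x_t}(s_{x_t}^0)(y_{x_t}^k - s_{x_t}^0)\bigr\|^2.
\end{equation*}
I would then insert $\pm \nabla\hat f_{x_t}(y_{x_t}^k)$, apply the triangle inequality, and use $(a+b)^2 \le 2a^2+2b^2$ to obtain
\begin{equation*}
\sum_{j=1}^d |\mE_{x_t,j}^k|^2 \le 2\bigl\|g_{x_t}(y_{x_t}^k;\mu) - \nabla\hat f_{x_t}(y_{x_t}^k)\bigr\|^2 + 2\bigl\|\nabla\hat f_{x_t}(y_{x_t}^k) - \nabla\hat f_{x_t}(s_{x_t}^0) - \nabla^2\hat f_{x_t}(s_{x_t}^0)(y_{x_t}^k - s_{x_t}^0)\bigr\|^2.
\end{equation*}

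The first term is bounded by $2\mathbf{E}(\mu)^2$ directly via Lemma \ref{lemma.zero order estimator error}. For the second term, a standard integral remainder estimate combined with Assumption \ref{assum.lipschitz hessian} gives $\tfrac{\rho^2}{4}\|y_{x_t}^k - s_{x_t}^0\|^4$; here I need $y_{x_t}^k$ and $s_{x_t}^0$ to both sit in $\mathbb{B}_{x_t,b}(0)$ so that Hessian Lipschitzness applies. This is exactly where Lemma \ref{fact} kicks in: for $k = 0, 1, \ldots, \mK-1$ we have $\|y_{x_t}^k - s_{x_t}^0\| \le 2B$, hence $\|y_{x_t}^k - s_{x_t}^0\|^4 \le 16 B^4$, and (since $B \le b$ under the parameter choice) the iterates remain inside the local Lipschitz ball. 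Combining yields the $2\mathbf{E}(\mu)^2 + 8\rho^2 B^4$ bound.

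For the $k=0$ case, the initialization $s_{x_t}^{-1} = s_{x_t}^0$ forces $y_{x_t}^0 = s_{x_t}^0$, so the Taylor remainder vanishes and the error reduces to the pure zeroth-order approximation error $\|g_{x_t}(s_{x_t}^0;\mu) - \nabla\hat f_{x_t}(s_{x_t}^0)\|^2 \le \mathbf{E}(\mu)^2$, again by Lemma \ref{lemma.zero order estimator error}; no factor of $2$ is needed and the Hessian term is absent. The main (mild) obstacle is simply the algebraic identification of $\nabla h_j(\tilde y_{x_t,j}^k)$ as the $j$-th component of a full tangent-space second-order expansion, which lets the eigenbasis orthogonality collapse $\sum_j |\mE_{x_t,j}^k|^2$ into a squared norm; once this is recognized, the two-term split and the bounds from Lemma \ref{fact} and Lemma \ref{lemma.zero order estimator error} deliver both conclusions.
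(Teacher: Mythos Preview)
Your proposal is correct and follows essentially the same approach as the paper: identify $\nabla h_j(\tilde y_{x_t,j}^k)$ as the $u_j$-component of the first-order Taylor model at $s_{x_t}^0$, collapse $\sum_j|\mE_{x_t,j}^k|^2$ into a squared norm via orthonormality of $\{u_j\}$, split into the zeroth-order estimation error (bounded by Lemma~\ref{lemma.zero order estimator error}) and the Hessian-Lipschitz remainder (bounded using Lemma~\ref{fact}). The only cosmetic difference is that the paper performs the split at the coordinate level before summing, whereas you sum first and then split the resulting norm; the ingredients and constants are identical.
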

\begin{proof}
    For any $j = 1, \cdots, d$, since $y_{x_t}^0 = s_{x_t}^0$, it holds that
    \begin{equation*}
       \mE_{x_t, j}^0 = \tilde g_{x_t, j}(y_{x_t}^0; \mu) - \nabla h_j (\tilde y_{x_t, j}^0) = \tilde g_{x_t, j}(y_{x_t}^0; \mu) - \tilde \nabla_j \hat f_{x_t}(\tilde y_{x_t}^0). 
    \end{equation*}
    Summing over $j$ gives
    \begin{align*}
        \sum_{j=1}^d |\mE_{x_t, j}^0|^2
        = &\sum_{j=1}^d | \tilde g_{x_t, j}(y_{x_t}^0; \mu) - \tilde \nabla_j \hat f_{x_t}(y_{x_t}^0)|^2 \\
        = &\|g_{x_t}(y_{x_t}^0; \mu) - \nabla \hat f_{x_t}(y_{x_t}^0)\|^2 \\
        \le &\mathbf{E}(\mu)^2.
    \end{align*}
    For any $k \ge 1$, by the definition of $\mE_{x_t, j}^k$, it follows
    \begin{subequations}
        \begin{align*}
            \sum_{j=1}^d |\mE_{x_t, j}^k|^2
            = &\sum_{j=1}^d |\tilde g_{x_t, j}(y_{x_t}^k; \mu) - \nabla h_j (\tilde y_{x_t, j}^k)|^2 \notag \\
            \le & 2\sum_{j=1}^d|\tilde g_{x_t, j}(y_{x_t}^k; \mu) - \tilde \nabla_j \hat f_{x_t}(y_{x_t}^k)|^2 + 2\sum_{j=1}^d|\tilde \nabla_j \hat f_{x_t}(y_{x_t}^k) -\nabla h_j (\tilde y_{x_t, j}^k)|^2. \notag
        \end{align*}
    \end{subequations}
    For the first term, we have
    \begin{equation*}
        \sum_{j=1}^d|\tilde g_{x_t, j}(y_{x_t}^k; \mu) - \tilde \nabla_j \hat f_{x_t}(y_{x_t}^k)|^2 = \|g_{x_t}(y_{x_t}^k; \mu) - \nabla \hat f_{x_t}(y_{x_t}^k)\|^2 \le \mathbf{E}(\mu)^2.
    \end{equation*}
    For the second term, since $\nabla^2 \hat f_{x_t}(s_t^0) u_j = \lambda_j u_j$, $\forall j$, we have
    \begin{subequations}
        \begin{align}
            &\sum_{j=1}^d|\tilde \nabla_j \hat f_{x_t}(y_{x_t}^k) -\nabla h_j (\tilde y_{x_t, j}^k)|^2 \notag \\
            =  &\sum_{j=1}^d|\tilde \nabla_j \hat f_{x_t}(y_{x_t}^k) - \tilde \nabla_j \hat f_{x^t}(s_{x_t}^0) - \lambda_j(\tilde y_{x_t}^k - \tilde s_{x_t}^0)|^2 \notag \\
            =  &\sum_{j=1}^d|\inner{\nabla \hat f_{x_t}(y_{x_t}^k) - \nabla \hat f_{x_t}(s_{x_t}^0) - \lambda_j(y_{x_t, j}^k - s_{x_t}^0)}{u_j}|^2 \notag \\
            =  &\sum_{j=1}^d|\inner{\nabla \hat f_{x_t}(y_{x_t}^k) - \nabla \hat f_{x_t}(s_{x_t}^0) - \nabla^2 \hat f_{x_t}(s_t^0)(y_{x_t}^k - s_{x_t}^0)}{u_j}|^2 \notag \\
            = &\|\nabla \hat f_{x_t}(y_{x_t}^k) - \nabla \hat f_{x_t}(s_{x_t}^0) - \nabla^2 \hat f_{x_t}(s_t^0)(y_{x_t}^k - s_{x_t}^0)\|^2 \notag\\
            \le &\frac{\rho^2}{4}\|y_{x_t}^k - s_{x_t}^0\|^4 \label{subeq.13a}\\
            \le &4\rho^2 B^4, \label{subeq.13b}
        \end{align}
    \end{subequations}
    where \eqref{subeq.13a} is due to the Lipschitz continuity of $\nabla^2 \hat f_{x_t}(\cdot)$, and \eqref{subeq.13b} follows from the Fact \eqref{fact}. Combining all the above inequalities completes the proof.
\end{proof}

Now we proceed to analyze the value of $\sum_{j=1}^d h_j(\tilde s_{x_t,j}^\mK)$. We split it into the following two cases:
\begin{equation*}
    \mathcal{S}_1:=\left\{j: \lambda_j \ge -\frac{\theta}{\eta}\right\} \ \text{and} \ \mathcal{S}_2:=\left\{j: \lambda_j < -\frac{\theta}{\eta}\right\}.
\end{equation*}

\begin{lemma}\label{lemma.tss s1}
    Suppose that Assumption \ref{assum.lipschitz gradient} and \ref{assum.lipschitz hessian} hold. Under the parameter setting \eqref{eq.parameter setting}, choose a reasonably small $\mu$ such that $\mathbf{E}(\mu) \le \frac{lB}{2}$ in Algorithm \ref{alg.AZO}. Then, for the tangent space step at iterate $x_t$, when the ``if condition" triggers, we have:
    \begin{equation*}
        \sum_{j \in \mathcal{S}_1} h_j(\tilde s_{x_t,j}^\mK) \le - \frac{3\theta}{8\eta} \sum_{j \in \mathcal{S}_1} \sum_{k=0}^{\mKb}|\tilde s_{x_t,j}^{k+1} - \tilde s_{x_t,j}^k|^2 + \frac{4\eta\mK}{\theta}\mathbf{E}(\mu)^2 + \frac{16\eta\mK\rho^2B^4}{\theta}.
    \end{equation*}
\end{lemma}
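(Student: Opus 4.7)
The plan is to analyze the inexact accelerated gradient descent iteration \eqref{eq.agd for h} coordinate by coordinate for $j\in\mathcal{S}_1$ via a Lyapunov function of the form
\[
\Phi_j^k \;=\; h_j(\tilde s_{x_t,j}^k)+\frac{c}{2}\bigl(\tilde s_{x_t,j}^k-\tilde s_{x_t,j}^{k-1}\bigr)^2,
\]
where $c>0$ is of order $1/\eta$, tuned to exploit the defining property $\lambda_j\ge -\theta/\eta$ of $\mathcal{S}_1$. Since $\tilde s_{x_t,j}^{-1}=\tilde s_{x_t,j}^0$ and $h_j(\tilde s_{x_t,j}^0)=0$, one has $\Phi_j^0=0$, while $\Phi_j^{\mK}\ge h_j(\tilde s_{x_t,j}^{\mK})$. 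Thus the lemma will follow from a per-step decrease inequality for $\Phi_j^k$, telescoped in $k$ and then summed over $j\in\mathcal{S}_1$.

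First I would write $h_j(\tilde s_{x_t,j}^{k+1})-h_j(\tilde s_{x_t,j}^k)$ as an exact quadratic form in the increments $A_k:=\tilde s_{x_t,j}^{k+1}-\tilde s_{x_t,j}^k$ and $B_k:=\tilde s_{x_t,j}^k-\tilde s_{x_t,j}^{k-1}$, using that $h_j$ is a one-dimensional quadratic with curvature $\lambda_j$, the momentum rule $\tilde y_{x_t,j}^k=\tilde s_{x_t,j}^k+(1-\theta)B_k$, and the update identity $\eta\nabla h_j(\tilde y_{x_t,j}^k)=(1-\theta)B_k-A_k-\eta\,\mE_{x_t,j}^k$. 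This produces a linear combination of $A_k^2$, $B_k^2$, the cross term $A_kB_k$, and the residual $\mE_{x_t,j}^k A_k$. Next I would apply Young's inequality twice: on $A_kB_k$, with a weight chosen so that the resulting $B_k^2$ contribution exactly cancels $-\tfrac{c}{2}B_k^2$ coming from $\Phi_j^{k+1}-\Phi_j^k$; and on $\mE_{x_t,j}^k A_k$, with a weight chosen so that only a fraction $\theta/(8\eta)$ of the $A_k^2$ budget is consumed, leaving an error contribution of order $(\eta/\theta)|\mE_{x_t,j}^k|^2$. Choosing $c$ optimally, the residual coefficient of $A_k^2$ collapses to at most $-3\theta/(8\eta)$ precisely because $\lambda_j\ge -\theta/\eta$, yielding
\[
\Phi_j^{k+1}-\Phi_j^k \;\le\; -\frac{3\theta}{8\eta}\,A_k^2+\frac{2\eta}{\theta}\bigl|\mE_{x_t,j}^k\bigr|^2.
\]

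Telescoping from $k=0$ to $\mKb$ and using $\Phi_j^0=0$ and $\Phi_j^{\mK}\ge h_j(\tilde s_{x_t,j}^{\mK})$ gives the per-coordinate bound
\[
h_j(\tilde s_{x_t,j}^{\mK}) \;\le\; -\frac{3\theta}{8\eta}\sum_{k=0}^{\mKb}A_k^2+\frac{2\eta}{\theta}\sum_{k=0}^{\mKb}\bigl|\mE_{x_t,j}^k\bigr|^2.
\]
Summing over $j\in\mathcal{S}_1$ preserves the first term in the form claimed by the lemma, while the error term becomes $\tfrac{2\eta}{\theta}\sum_{k=0}^{\mKb}\sum_{j\in\mathcal{S}_1}|\mE_{x_t,j}^k|^2$. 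Invoking Lemma \ref{lemma.upper bound of mE}, which bounds $\sum_j|\mE_{x_t,j}^0|^2\le \mathbf{E}(\mu)^2$ and $\sum_j|\mE_{x_t,j}^k|^2\le 2\mathbf{E}(\mu)^2+8\rho^2B^4$ for $k\ge 1$, and upper-bounding the total crudely by $2\mK\mathbf{E}(\mu)^2+8\mK\rho^2B^4$, produces exactly $\frac{4\eta\mK}{\theta}\mathbf{E}(\mu)^2+\frac{16\eta\mK\rho^2B^4}{\theta}$.

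The main obstacle will be the second step: choosing the Lyapunov coefficient $c$ and the two Young's-inequality weights so that the residual $A_k^2$ coefficient is \emph{exactly} no larger than $-3\theta/(8\eta)$. The margin $\lambda_j\ge -\theta/\eta$ is used essentially here, since it is what lets the curvature contribution $\lambda_j/2$ arising from the Taylor expansion of $h_j$ be dominated by the step-size term $-1/\eta$ after the momentum cross-term has been absorbed. Coordinates with $\lambda_j<-\theta/\eta$ fail this inequality and must be handled by a separate argument in the lemma for $\mathcal{S}_2$; this is why the split into $\mathcal{S}_1$ and $\mathcal{S}_2$ is natural and necessary.
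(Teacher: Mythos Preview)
Your proposal is correct and follows essentially the same potential-function strategy as the paper, whose Lyapunov function is $h_j(\tilde s_{x_t,j}^k)+\frac{(1+\theta)(1-\theta)^2}{2\eta}(\tilde s_{x_t,j}^k-\tilde s_{x_t,j}^{k-1})^2$, with the same Young's-inequality treatment of the error term $\mE_{x_t,j}^k A_k$ (weight $2\eta/\theta$), the same telescoping, and the same appeal to Lemma~\ref{lemma.upper bound of mE}. The only cosmetic difference is that the paper handles the momentum cross term via the polarization identity---dropping the nonpositive term $\bigl(-\tfrac{1}{2\eta}+\tfrac{\lambda_j}{2}\bigr)|\tilde s_{x_t,j}^{k+1}-\tilde y_{x_t,j}^k|^2$---rather than a second Young's inequality, which pins down $c$ explicitly and makes the $-3\theta/(8\eta)$ constant hold for all $\theta\le 1$ without further tuning.
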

The following proof follows the proof of Lemma 3.2 in \cite{li2022restarted}. We only list the sketch for simplicity.
\begin{proof}
    For any $k = 0, 1, \cdots, \mK-1$ and $j \in \mathcal{S}_1$, as $h_j(\cdot)$ is a one-dimensional quadratic function
        \begin{subequations}
        \begin{align}
            &h_j(\tilde s_{x_t,j}^{k+1}) - h_j(\tilde s_{x_t,j}^{k})\notag \\
            =   &\inner{\nabla h_j(\tilde s_{x_t,j}^{k})}{\tilde s_{x_t,j}^{k+1} - \tilde s_{x_t,j}^k} + \frac{\lambda_j}{2}|\tilde s_{x_t,j}^{k+1} - \tilde s_{x_t,j}^k|^2 \notag \\
            =   &\inner{\nabla h_j(\tilde s_{x_t,j}^{k}) - \nabla h_j(\tilde y_{x_t,j}^{k})}{\tilde s_{x_t,j}^{k+1} - \tilde s_{x_t,j}^k} + \inner{\nabla h_j(\tilde y_{x_t,j}^{k})}{\tilde s_{x_t,j}^{k+1} - \tilde s_{x_t,j}^k} + \frac{\lambda_j}{2}|\tilde s_{x_t,j}^{k+1} - \tilde s_{x_t,j}^k|^2 \notag \\
            =   &\lambda_j\inner{\tilde s_{x_t,j}^{k} - \tilde y_{x_t,j}^{k}}{\tilde s_{x_t,j}^{k+1} - \tilde s_{x_t,j}^k} -\frac{1}{\eta} \inner{\tilde s_{x_t,j}^{k+1} - \tilde y_{x_t,j}^k + \eta \mathcal{E}_{x_t,j}^k}{\tilde s_{x_t,j}^{k+1} - \tilde s_{x_t,j}^k} + \frac{\lambda_j}{2}|\tilde s_{x_t,j}^{k+1} - \tilde s_{x_t,j}^k|^2 \notag \\
            =   & \lambda_j\inner{\tilde s_{x_t,j}^{k} - \tilde y_{x_t,j}^{k}}{\tilde s_{x_t,j}^{k+1} - \tilde s_{x_t,j}^k} -\frac{1}{\eta} \inner{\tilde s_{x_t,j}^{k+1} - \tilde y_{x_t,j}^k}{\tilde s_{x_t,j}^{k+1} - \tilde s_{x_t,j}^k} - \inner{\mathcal{E}_{x_t,j}^k}{\tilde s_{x_t,j}^{k+1} - \tilde s_{x_t,j}^k} + \frac{\lambda_j}{2}|\tilde s_{x_t,j}^{k+1} - \tilde s_{x_t,j}^k|^2 \notag\\
            =   &\frac{\lambda_j}{2}(|\tilde s_{x_t,j}^{k+1} - \tilde y_{x_t,j}^{k}|^2 - |\tilde s_{x_t,j}^{k} - \tilde y_{x_t,j}^{k}|^2 - |\tilde s_{x_t,j}^{k+1} - \tilde s_{x_t,j}^k|^2) - \inner{\mathcal{E}_{x_t,j}^k}{\tilde s_{x_t,j}^{k+1} - \tilde s_{x_t,j}^k} \notag \\
            &+ \frac{\lambda_j}{2}|\tilde s_{x_t,j}^{k+1} - \tilde s_{x_t,j}^k|^2 + \frac{1}{2\eta}(|\tilde s_{x_t,j}^{k} - \tilde y_{x_t,j}^k|^2 - |\tilde s_{x_t,j}^{k+1} - \tilde y_{x_t,j}^k|^2 - |\tilde s_{x_t,j}^{k+1} - \tilde s_{x_t,j}^k|^2) \notag \\
            \le & \frac{\lambda_j}{2}(|\tilde s_{x_t,j}^{k+1} - \tilde y_{x_t,j}^{k}|^2 - |\tilde s_{x_t,j}^{k} - \tilde y_{x_t,j}^{k}|^2) + \frac{2\eta}{\theta}|\mE_{x_t,j}^k|^2 + \frac{\theta}{8\eta}|\tilde s_{x_t,j}^{k+1} - \tilde s_{x_t,j}^k|^2 \notag \\
            &+ \frac{1}{2\eta}(|\tilde s_{x_t,j}^{k} - \tilde y_{x_t,j}^k|^2 - |\tilde s_{x_t,j}^{k+1} - \tilde y_{x_t,j}^k|^2 - |\tilde s_{x_t,j}^{k+1} - \tilde s_{x_t,j}^k|^2), \notag
        \end{align}
    \end{subequations}
    where the last inequality is due to $- \inner{\mathcal{E}_{x_t,j}^k}{\tilde s_{x_t,j}^{k+1} - \tilde s_{x_t,j}^k} \le |\mathcal{E}_{x_t,j}^k| \cdot |\tilde s_{x_t,j}^{k+1} - \tilde s_{x_t,j}^k| \le \frac{2\eta}{\theta}|\mE_{x_t,j}^k|^2 + \frac{\theta}{8\eta}|\tilde s_{x_t,j}^{k+1} - \tilde s_{x_t,j}^k|^2$ by applying Young's inequality. Since $l \ge \lambda_j \ge -\frac{\theta}{\eta}$ holds, it follows
    \begin{align*}
        &(-\frac{1}{2\eta} + \frac{\lambda_j}{2})|\tilde s_{x_t,j}^{k+1} - \tilde y_{x_t,j}^k|^2 \le (-2l + \frac{l}{2})|\tilde s_{x_t,j}^{k+1} - \tilde y_{x_t,j}^k|^2 \le 0,\\
        &-\frac{\lambda_j}{2}|\tilde s_{x_t,j}^{k} - \tilde y_{x_t,j}^k|^2 \le \frac{\theta}{2\eta}|\tilde s_{x_t,j}^{k} - \tilde y_{x_t,j}^k|^2.
    \end{align*}
    Combing the above inequalities implies that
    \begin{subequations}
        \begin{align}
            &h_j(\tilde s_{x_t,j}^{k+1}) - h_j(\tilde s_{x_t,j}^{k}) \notag \\
            \le &\frac{1}{2\eta}(|\tilde s_{x_t,j}^{k} - \tilde y_{x_t,j}^k|^2 - |\tilde s_{x_t,j}^{k+1} - \tilde s_{x_t,j}^k|^2) + \frac{2\eta}{\theta}|\mE_{x_t,j}^k|^2 + \frac{\theta}{8\eta}|\tilde s_{x_t,j}^{k+1} - \tilde s_{x_t,j}^k|^2 + \frac{\theta}{2\eta}|\tilde s_{x_t,j}^{k} - \tilde y_{x_t,j}^k|^2 \notag \\
            =   &\frac{(1-\theta)^2}{2\eta}|\tilde s_{x_t,j}^{k} - \tilde s_{x_t,j}^{k-1}|^2 - (\frac{1}{2\eta} - \frac{\theta}{8\eta})|\tilde s_{x_t,j}^{k+1} - \tilde s_{x_t,j}^k|^2 + \frac{2\eta}{\theta}|\mE_{x_t,j}^k|^2 + \frac{\theta(1-\theta)^2}{2\eta}|\tilde s_{x_t,j}^{k} - \tilde s_{x_t,j}^{k-1}|^2 \notag \\
            =   &\frac{(1+\theta)(1-\theta)^2}{2\eta}|\tilde s_{x_t,j}^{k} - \tilde s_{x_t,j}^{k-1}|^2 - (\frac{1}{2\eta} - \frac{\theta}{8\eta})|\tilde s_{x_t,j}^{k+1} - \tilde s_{x_t,j}^k|^2 + \frac{2\eta}{\theta}|\mE_{x_t,j}^k|^2. \notag
        \end{align}
    \end{subequations}
    To proceed, we define the potential function
    \begin{equation*}
        l_{x_t,j}^{k} = h_j(\tilde s_{x_t,j}^{k}) + \frac{(1+\theta)(1-\theta)^2}{2\eta}|\tilde s_{x_t,j}^{k} - \tilde s_{x_t,j}^{k-1}|^2,
    \end{equation*}
    and it gives 
    \begin{subequations}
        \begin{align}
            &l_{x_t,j}^{k+1} - l_{x_t,j}^{k} \notag\\
            \le &- (\frac{1}{2\eta} - \frac{\theta}{8\eta} - \frac{(1+\theta)(1-\theta)^2}{2\eta})|\tilde s_{x_t,j}^{k+1} - \tilde s_{x_t,j}^k|^2 + \frac{2\eta}{\theta}|\mE_{x_t,j}^k|^2 \notag\\
            \le &- \frac{3\theta}{8\eta}|\tilde s_{x_t,j}^{k+1} - \tilde s_{x_t,j}^k|^2 + \frac{2\eta}{\theta}|\mE_{x_t,j}^k|^2. \notag
        \end{align}
    \end{subequations}
    Summing over $k = 0, 1, \cdots, \mK-1$ and $j \in \mathcal{S}_1$, and using $s_{x_t}^0 = s_{x_t}^{-1}$, we conclude
    \begin{subequations}
        \begin{align}
            &\sum_{j \in \mathcal{S}_1} h_j(\tilde s_{x_t,j}^\mK) \notag\\
            \le &\sum_{j \in \mathcal{S}_1} l_{x_t,j}^\mK \notag\\
            =   &\sum_{j \in \mathcal{S}_1} \sum_{k=0}^{\mKb} (l_{x_t,j}^{k+1} - l_{x_t,j}^k) + l_{x_t,j}^0 \notag\\
            =   &\sum_{j \in \mathcal{S}_1} \sum_{k=0}^{\mKb} (l_{x_t,j}^{k+1} - l_{x_t,j}^k) \notag \\
            \le &- \frac{3\theta}{8\eta} \sum_{j \in \mathcal{S}_1} \sum_{k=0}^{\mKb}|\tilde s_{x_t,j}^{k+1} - \tilde s_{x_t,j}^k|^2 + \frac{2\eta}{\theta}\sum_{j \in \mathcal{S}_1} \sum_{k=0}^{\mKb}|\mE_{x_t,j}^k|^2 \notag\\
            \le  &- \frac{3\theta}{8\eta} \sum_{j \in \mathcal{S}_1} \sum_{k=0}^{\mKb}|\tilde s_{x_t,j}^{k+1} - \tilde s_{x_t,j}^k|^2 + \frac{4\eta\mK}{\theta}\mathbf{E}(\mu)^2 + \frac{16\eta\mK\rho^2B^4}{\theta}, \notag
        \end{align}
    \end{subequations}
    where the last inequality is due to Lemma \eqref{lemma.upper bound of mE}.
\end{proof}
\begin{lemma}\label{lemma.tss s2}
    Suppose that Assumption \ref{assum.lipschitz gradient} and \ref{assum.lipschitz hessian} hold. Under the parameter setting \eqref{eq.parameter setting}, choose a reasonably small $\mu$ such that $\mathbf{E}(\mu) \le \frac{lB}{2}$ in Algorithm \ref{alg.AZO}. Then, for the tangent space step at iterate $x_t$, when the ``if condition" triggers, we have:
    \begin{equation*}
        \sum_{j \in \mathcal{S}_2} h_j(\tilde s_{x_t,j}^\mK) \le -\frac{\theta}{2\eta} \sum_{j \in \mathcal{S}_2} \sum_{k=0}^{\mKb}|\tilde s_{x_t,j}^{k+1} - \tilde s_{x_t,j}^{k}|^2 + \eta\mK \mathbf{E}(\mu)^2 + \frac{\mK}{2B^{\frac{3}{2}}}\mathbf{E}(\mu)^{2} + \frac{\mK B^{\frac{7}{2}}}{2} + \frac{\eta\mK \mathbf{E}(\mu)^2}{\theta} + \frac{4\eta \mK\rho^2 B^4}{\theta}.
    \end{equation*}
\end{lemma}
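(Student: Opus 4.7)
The strategy parallels the proof of the preceding lemma for $\mathcal{S}_1$, but exploits the strong negative curvature condition $\lambda_j<-\theta/\eta$ that holds throughout $\mathcal{S}_2$. The key observation is that, in these strongly negatively-curved directions, the quadratic term in the Taylor expansion of $h_j$ already delivers the desired $-\frac{\theta}{2\eta}|\tilde s_{x_t,j}^{k+1}-\tilde s_{x_t,j}^k|^2$ per step, so no intricate potential function is needed to produce that coefficient; only the linear piece has to be estimated.

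Concretely, I would start from the exact identity
\begin{equation*}
h_j(\tilde s_{x_t,j}^{k+1})-h_j(\tilde s_{x_t,j}^k)=\nabla h_j(\tilde s_{x_t,j}^k)(\tilde s_{x_t,j}^{k+1}-\tilde s_{x_t,j}^k)+\tfrac{\lambda_j}{2}|\tilde s_{x_t,j}^{k+1}-\tilde s_{x_t,j}^k|^2,
\end{equation*}
and use $\lambda_j\le-\theta/\eta$ to peel off $-\tfrac{\theta}{2\eta}|\tilde s_{x_t,j}^{k+1}-\tilde s_{x_t,j}^k|^2$. Substituting $\nabla h_j(\tilde s_{x_t,j}^k)=\nabla h_j(\tilde y_{x_t,j}^k)+\lambda_j(\tilde s_{x_t,j}^k-\tilde y_{x_t,j}^k)$ together with the update relation $\nabla h_j(\tilde y_{x_t,j}^k)=-\tfrac{1}{\eta}(\tilde s_{x_t,j}^{k+1}-\tilde y_{x_t,j}^k)-\mE_{x_t,j}^k$ and the momentum identity $\tilde s_{x_t,j}^k-\tilde y_{x_t,j}^k=-(1-\theta)(\tilde s_{x_t,j}^k-\tilde s_{x_t,j}^{k-1})$ splits the linear term into a polarization-friendly quadratic piece $-\tfrac{1}{\eta}(\tilde s_{x_t,j}^{k+1}-\tilde y_{x_t,j}^k)(\tilde s_{x_t,j}^{k+1}-\tilde s_{x_t,j}^k)$, a momentum cross term $-\lambda_j(1-\theta)(\tilde s_{x_t,j}^k-\tilde s_{x_t,j}^{k-1})(\tilde s_{x_t,j}^{k+1}-\tilde s_{x_t,j}^k)$, and an estimator-error cross term $-\mE_{x_t,j}^k(\tilde s_{x_t,j}^{k+1}-\tilde s_{x_t,j}^k)$.

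The listed error terms correspond to distinct Young's inequality applications on these cross terms:
\begin{itemize}
\item $-\mE_{x_t,j}^k(\tilde s_{x_t,j}^{k+1}-\tilde s_{x_t,j}^k)\le\tfrac{\eta}{\theta}|\mE_{x_t,j}^k|^2+\tfrac{\theta}{4\eta}|\tilde s_{x_t,j}^{k+1}-\tilde s_{x_t,j}^k|^2$ with weight $\theta/\eta$, combined with Lemma \ref{lemma.upper bound of mE}, yields the $\tfrac{\eta\mK\mathbf{E}(\mu)^2}{\theta}+\tfrac{4\eta\mK\rho^2 B^4}{\theta}$ contribution;
\item a second, unweighted Young's $|\mE_{x_t,j}^k\cdot\square|\le\tfrac{\eta}{2}|\mE_{x_t,j}^k|^2+\tfrac{1}{2\eta}\square^2$ produces the $\eta\mK\mathbf{E}(\mu)^2$ term;
\item a non-standard Young's with weight $B^{3/2}$, $|\mE_{x_t,j}^k\cdot\square|\le\tfrac{1}{2B^{3/2}}|\mE_{x_t,j}^k|^2+\tfrac{B^{3/2}}{2}\square^2$, applied to a product of the form $\mE_{x_t,j}^k\cdot(\tilde s_{x_t,j}^{k+1}-\tilde s_{x_t,j}^0)$ together with the iterate boundedness $\|s_{x_t}^{k+1}-s_{x_t}^0\|\le B$ from Lemma \ref{fact}, yields $\tfrac{\mK\mathbf{E}(\mu)^2}{2B^{3/2}}+\tfrac{\mK B^{7/2}}{2}$ after summation.
\end{itemize}
Telescoping from $k=0$ to $\mK-1$ with $h_j(\tilde s_{x_t,j}^0)=0$ (by definition of $h_j$), summing over $j\in\mathcal{S}_2$, and collecting the $|\tilde s_{x_t,j}^{k+1}-\tilde s_{x_t,j}^k|^2$ contributions then gives the claim.

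The main obstacle is the momentum cross term, whose coefficient $(\tfrac{1}{\eta}-\lambda_j)(1-\theta)$ is of order $1/\eta$ since $|\lambda_j|$ may be as large as $l$. The positive $|\tilde s_{x_t,j}^k-\tilde s_{x_t,j}^{k-1}|^2$ contribution it produces after Young's has to be absorbed by the slack $-\tfrac{1}{2\eta}|\tilde s_{x_t,j}^{k+1}-\tilde y_{x_t,j}^k|^2+\tfrac{(1-\theta)^2}{2\eta}|\tilde s_{x_t,j}^k-\tilde s_{x_t,j}^{k-1}|^2$ obtained from the polarization identity on the Euclidean piece (together with the telescoping bound $\sum_k|\tilde s_{x_t,j}^k-\tilde s_{x_t,j}^{k-1}|^2\le\sum_k|\tilde s_{x_t,j}^{k+1}-\tilde s_{x_t,j}^k|^2$ that follows from $s_{x_t}^{-1}=s_{x_t}^0$), so that only the modest remainder is charged against the $-\tfrac{\theta}{2\eta}|\tilde s_{x_t,j}^{k+1}-\tilde s_{x_t,j}^k|^2$ budget, which dominates it since $\theta\le 1$ and $\eta=1/(4l)$. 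Careful bookkeeping of the Young's weights is needed to simultaneously preserve the target coefficient and reproduce exactly the six error terms displayed on the right-hand side.
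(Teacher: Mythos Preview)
Your plan has a real gap in the handling of the momentum cross term. After the decomposition you describe, the linear term in $h_j(\tilde s_{x_t,j}^{k+1})-h_j(\tilde s_{x_t,j}^{k})$ contains
\[
-\lambda_j(1-\theta)\bigl(\tilde s_{x_t,j}^{k}-\tilde s_{x_t,j}^{k-1}\bigr)\bigl(\tilde s_{x_t,j}^{k+1}-\tilde s_{x_t,j}^{k}\bigr),
\]
and after the polarization identity on the $-\tfrac{1}{\eta}(\tilde s^{k+1}-\tilde y^k)(\tilde s^{k+1}-\tilde s^k)$ piece and the telescoping shift $\sum_k|\tilde s^{k}-\tilde s^{k-1}|^2\le\sum_k|\tilde s^{k+1}-\tilde s^{k}|^2$, the net coefficient in front of $|\tilde s_{x_t,j}^{k+1}-\tilde s_{x_t,j}^{k}|^2$ works out to
\[
-\frac{1}{\eta}+\frac{\lambda_j}{2}+(1-\theta)\Bigl(\frac{1}{\eta}-\lambda_j\Bigr)\;=\;-\frac{\theta}{\eta}-\frac{\lambda_j}{2}(1-2\theta).
\]
Since $\lambda_j$ can be as negative as $-l$ and $\theta=\mathcal{O}(\epsilon^{1/4})$ is small, the second term is $+\tfrac{l}{2}(1-2\theta)$, which is \emph{positive and of order $l$}, whereas the available budget $\theta/\eta=4l\theta$ is of order $l\theta$. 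No choice of Young's weights can close this gap: the inequality you need is simply false per step under AM--GM when $|\lambda_j|\sim l$. Your claim that ``only a modest remainder is charged against the $-\tfrac{\theta}{2\eta}$ budget, which dominates it'' is incorrect in this regime.

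The paper avoids this obstruction by a different mechanism. It introduces the vertex $v_{x_t,j}=\tilde s_{x_t,j}^{0}-\lambda_j^{-1}\tilde\nabla_j\hat f_{x_t}(s_{x_t}^{0})$ of the concave parabola $h_j$, writes
\[
h_j(\tilde s^{k+1})-h_j(\tilde s^{k})=\frac{\lambda_j}{2}|\tilde s^{k+1}-\tilde s^{k}|^2+\lambda_j\bigl\langle \tilde s^{k+1}-\tilde s^{k},\,\tilde s^{k}-v_{x_t,j}\bigr\rangle,
\]
and then establishes the recursion
\[
\bigl\langle \tilde s^{k+1}-\tilde s^{k},\,\tilde s^{k}-v_{x_t,j}\bigr\rangle\ \ge\ (1-\theta)\bigl\langle \tilde s^{k}-\tilde s^{k-1},\,\tilde s^{k-1}-v_{x_t,j}\bigr\rangle+\frac{\eta}{2\lambda_j}|\mE_{x_t,j}^k|^2.
\]
Unrolling to $k=0$ shows this inner product is essentially nonnegative (up to error terms originating from $\mE^0$, which is where the $\eta\mK\mathbf{E}(\mu)^2$, $\tfrac{\mK}{2B^{3/2}}\mathbf{E}(\mu)^2$ and $\tfrac{\mK B^{7/2}}{2}$ terms actually come from, via Young's on $\langle\mE_{x_t,j}^{0},\tilde s_{x_t,j}^{1}-\tilde y_{x_t,j}^{0}\rangle$ with weight $B^{3/2}$). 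Multiplying by $\lambda_j<0$ then makes the cross term contribute \emph{negatively} without ever invoking AM--GM on the momentum pair. This sign argument is the missing idea in your proposal.
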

\begin{proof}
    Let $v_{x_t,j} = \tilde s_{x_t,j}^0 -\frac{1}{\lambda_j}\tilde \nabla_j \hat f_{x_t}(s_{x_t}^0)$, the one-dimensional quadratic function $h_j(\cdot)$ can be rewritten as
    \begin{equation*}
        h_j(z) = \frac{\lambda_j}{2}(z - v_{x_t,j})^2 - \frac{1}{2\lambda_j}|\tilde \nabla_j \hat f_{x_t}(s_{x_t}^0)|^2.
    \end{equation*}
    Consequently, for any $k = 0, 1, \cdots, \mK - 1$ and $j \in \mathcal{S}_2$, we have
    \begin{equation}
        \begin{aligned}
            &h_j(\tilde s_{x_t,j}^{k+1}) - h_j(\tilde s_{x_t,j}^{k})  \\
            =   &\frac{\lambda_j}{2}|\tilde s_j^{k+1} - v_{x_t,j}|^2 - \frac{\lambda_j}{2}|\tilde s_{x_t,j}^{k} - v_{x_t,j}|^2 \\
            =   &\frac{\lambda_j}{2}|\tilde s_{x_t,j}^{k+1} - \tilde s_{x_t,j}^{k}|^2 + \lambda_j\inner{\tilde s_{x_t,j}^{k+1} - \tilde s_{x_t,j}^{k}}{\tilde s_{x_t,j}^{k} - v_{x_t,j}}  \\
            \le &-\frac{\theta}{2\eta}|\tilde s_{x_t,j}^{k+1} - \tilde s_{x_t,j}^{k}|^2 + \lambda_j\inner{\tilde s_{x_t,j}^{k+1} - \tilde s_{x_t,j}^{k}}{\tilde s_{x_t,j}^{k} - v_{x_t,j}}. 
        \end{aligned}
    \end{equation}
    Recall the update in \eqref{eq.agd for h}, it holds that
    \begin{subequations}
        \begin{align}
            &\tilde s_{x_t,j}^{k+1} - \tilde s_{x_t,j}^{k} \notag \\
            = &\tilde y_{x_t,j}^k - \eta \nabla h_j(\tilde y_{x_t,j}^k) - \eta \mE_{x_t,j}^k - \tilde s_{x_t,j}^{k} \notag \\
            = &(1-\theta)(\tilde s_{x_t,j}^{k} - \tilde s_{x_t,j}^{k-1}) - \eta \nabla h_j(\tilde y_{x_t,j}^k) - \eta \mE_{x_t,j}^k \notag \\
            = &(1-\theta)(\tilde s_{x_t,j}^{k} - \tilde s_{x_t,j}^{k-1}) - \eta \lambda_j (\tilde y_{x_t,j}^k - v_j) - \eta \mE_{x_t,j}^k \notag \\
            = &(1-\theta)(\tilde s_{x_t,j}^{k} - \tilde s_{x_t,j}^{k-1}) - \eta \lambda_j (\tilde s_{x_t,j}^k - v_{x_t,j} + (1-\theta)(\tilde s_{x_t,j}^{k} - \tilde s_{x_t,j}^{k-1})) - \eta \mE_{x_t,j}^k. \notag
        \end{align}
    \end{subequations}
    Substituting the above equality into the term $\inner{\tilde s_{x_t,j}^{k+1} - \tilde s_{x_t,j}^{k}}{\tilde s_{x_t,j}^{k} - v_{x_t,j}}$ gives 
    \begin{equation}\label{eq.20}
        \begin{aligned}
            &\inner{\tilde s_{x_t,j}^{k+1} - \tilde s_{x_t,j}^{k}}{\tilde s_{x_t,j}^{k} - v_{x_t,j}}\\
            =  &(1-\theta)\inner{\tilde s_{x_t,j}^{k} - \tilde s_{x_t,j}^{k-1}}{\tilde s_{x_t,j}^{k} - v_{x_t,j}} - \eta\lambda_j|\tilde s_{x_t,j}^{k} - v_{x_t,j}|^2\\ 
            &\underbrace{- \eta\lambda_j(1-\theta)\inner{\tilde s_{x_t,j}^{k} - \tilde s_{x_t,j}^{k-1}}{\tilde s_{x_t,j}^{k} - v_{x_t,j}} - \eta\inner{\mE_{x_t,j}^k}{\tilde s_{x_t,j}^{k} - v_{x_t,j}}}_{\clubsuit}.\\
        \end{aligned}
    \end{equation}
    We first provide a lower bound for the term $\clubsuit$ in \eqref{eq.20}:
    \begin{subequations}
        \begin{align}
            &- \eta\lambda_j(1-\theta)\inner{\tilde s_{x_t,j}^{k} - \tilde s_{x_t,j}^{k-1}}{\tilde s_{x_t,j}^{k} - v_{x_t,j}} - \eta\inner{\mE_{x_t,j}^k}{\tilde s_{x_t,j}^{k} - v_{x_t,j}} \notag \\
            = &\frac{\eta\lambda_j(1-\theta)}{2}(|\tilde s_{x_t,j}^{k} - \tilde s_{x_t,j}^{k-1}|^2 + |v_{x_t,j} - \tilde s_{x_t,j}^{k}|^2 -  |\tilde s_{x_t,j}^{k-1} - v_{x_t,j}|^2)- \eta\inner{\mE_{x_t,j}^k}{\tilde s_{x_t,j}^{k} - v_{x_t,j}} \notag \\
            \ge &\frac{\eta\lambda_j(1-\theta)}{2}(|\tilde s_{x_t,j}^{k} - \tilde s_{x_t,j}^{k-1}|^2 + |\tilde s_{x_t,j}^{k} - v_{x_t,j}|^2) + \frac{\eta}{2\lambda_j(1+\theta)}|\mE_{x_t,j}^k|^2 + \frac{\eta\lambda_j(1+\theta)}{2}|\tilde s_{x_t,j}^{k} - v_{x_t,j}|^2 \label{subeq.21a} \\
            = &\frac{\eta\lambda_j(1-\theta)}{2}|\tilde s_{x_t,j}^{k} - \tilde s_{x_t,j}^{k-1}|^2 + \eta\lambda_j|\tilde s_{x_t,j}^{k} - v_{x_t,j}|^2 + \frac{\eta}{2\lambda_j(1+\theta)}|\mE_{x_t,j}^k|^2 \notag,
        \end{align}
    \end{subequations}
    where \eqref{subeq.21a} holds because $\lambda_j < 0$ and $- \inner{\mE_{x_t,j}^k}{\tilde s_{x_t,j}^{k} - v_{x_t,j}} \ge  \frac{1}{2\lambda_j(1+\theta)}|\mE_{x_t,j}^k|^2 + \frac{\lambda_j(1+\theta)}{2}|\tilde s_{x_t,j}^{k} - v_{x_t,j}|^2$ due to the Young's inequality. Plugging the above inequality back into \eqref{eq.20} gives
    \begin{equation}
        \begin{aligned}
            &\inner{\tilde s_{x_t,j}^{k+1} - \tilde s_{x_t,j}^{k}}{\tilde s_{x_t,j}^{k} - v_{x_t,j}}\\ 
            \ge &(1-\theta)\inner{\tilde s_{x_t,j}^{k} - \tilde s_{x_t,j}^{k-1}}{\tilde s_{x_t,j}^{k} - v_{x_t,j}} + \frac{\eta\lambda_j(1-\theta)}{2}|\tilde s_{x_t,j}^{k} - \tilde s_{x_t,j}^{k-1}|^2 + \frac{\eta}{2\lambda_j(1+\theta)}|\mE_{x_t,j}^k|^2 \\
            = &(1-\theta)\inner{\tilde s_{x_t,j}^{k} - \tilde s_{x_t,j}^{k-1}}{\tilde s_{x_t,j}^{k} - \tilde s_{x_t,j}^{k-1}} + (1-\theta)\inner{\tilde s_{x_t,j}^{k} - \tilde s_{x_t,j}^{k-1}}{\tilde s_{x_t,j}^{k-1} - v_{x_t,j}} + \frac{\eta\lambda_j(1-\theta)}{2}|\tilde s_{x_t,j}^{k} - \tilde s_{x_t,j}^{k-1}|^2 + \frac{\eta}{2\lambda_j(1+\theta)}|\mE_{x_t,j}^k|^2 \\
            = &(1+\frac{\eta\lambda_j}{2})(1-\theta)|\tilde s_{x_t,j}^{k} - \tilde s_{x_t,j}^{k-1}|^2 + (1-\theta)\inner{\tilde s_{x_t,j}^{k} - \tilde s_{x_t,j}^{k-1}}{\tilde s_{x_t,j}^{k-1} - v_{x_t,j}} + \frac{\eta}{2\lambda_j(1+\theta)}|\mE_{x_t,j}^k|^2 \\
            \ge &(1-\theta)\inner{\tilde s_{x_t,j}^{k} - \tilde s_{x_t,j}^{k-1}}{\tilde s_{x_t,j}^{k-1} - v_{x_t,j}} + \frac{\eta}{2\lambda_j}|\mE_{x_t,j}^k|^2, \notag
        \end{aligned}
    \end{equation}
    where the last inequality comes from $(1+\frac{\eta\lambda_j}{2})(1-\theta) \ge (1-\frac{\eta l}{2})(1-\theta) = (1-\frac{1}{8})(1-\theta) > 0$. Hence, it implies that
    \begin{subequations}
        \begin{align}
            &\inner{\tilde s_{x_t,j}^{k+1} - \tilde s_{x_t,j}^{k}}{\tilde s_{x_t,j}^{k} - v_{x_t,j}} \notag \\
            \ge &(1-\theta)\inner{\tilde s_{x_t,j}^{k} - \tilde s_{x_t,j}^{k-1}}{\tilde s_{x_t,j}^{k-1} - v_{x_t,j}} + \frac{\eta}{2\lambda_j}|\mE_{x_t,j}^k|^2 \notag \\
            \ge &(1-\theta)^k\inner{\tilde s_{x_t,j}^{1} - \tilde s_{x_t,j}^{0}}{\tilde s_{x_t,j}^{0} - v_{x_t,j}} + \frac{\eta}{2\lambda_j}\sum_{t=1}^k(1-\theta)^{k-t}|\mE_{x_t,j}^t|^2 \notag \\
            =   &(1-\theta)^k\eta\inner{- \nabla h_j(\tilde y_{x_t,j}^{0}) -  \mE_{x_t,j}^0}{\tilde y_{x_t,j}^{0} - v_{x_t,j}} + \frac{\eta}{2\lambda_j}\sum_{t=1}^k(1-\theta)^{k-t}|\mE_{x_t,j}^t|^2 \label{subeq.22a}\\
            =   &(1-\theta)^k\eta\inner{\lambda_j (v_{x_t,j} - \tilde y_{x_t,j}^{0}) - \mE_{x_t,j}^0}{\tilde y_{x_t,j}^{0} - v_{x_t,j}} + \frac{\eta}{2\lambda_j}\sum_{t=1}^k(1-\theta)^{k-t}|\mE_{x_t,j}^t|^2 \notag \\
            =   &-(1-\theta)^k\eta \lambda_j|v_{x_t,j} - \tilde y_{x_t,j}^{0}|^2 + (1-\theta)^k\eta \inner{\mE_{x_t,j}^0}{v_{x_t,j} - \tilde y_{x_t,j}^{0}} + \frac{\eta}{2\lambda_j}\sum_{t=1}^k(1-\theta)^{k-t}|\mE_{x_t,j}^t|^2 \notag \\
            \ge &(1-\theta)^k\underbrace{\eta \inner{\mE_{x_t,j}^0}{v_{x_t,j} - \tilde y_{x_t,j}^{0}}}_{\spadesuit} + \frac{\eta}{2\lambda_j}\sum_{t=1}^k(1-\theta)^{k-t}|\mE_{x_t,j}^t|^2, \label{subeq.22b}
        \end{align}
    \end{subequations}
    where \eqref{subeq.22a} follows from the update in \eqref{eq.agd for h}, and \eqref{subeq.22b} is implied by $\lambda_j < 0$. Recall $v_{x_t,j} - \tilde y_{x_t,j}^{0} = -\frac{1}{\lambda_j} \tilde \nabla_j \hat f_{x_t}(s_{x_t}^0)$, and thus the term $\spadesuit$ can be lower bounded as follows:
    \begin{subequations}
        \begin{align}
            &\eta \inner{\mE_{x_t,j}^0}{v_{x_t,j} - \tilde y_{x_t,j}^{0}} \notag \\
            =   &-\frac{\eta}{\lambda_j}\inner{\mE_{x_t,j}^0}{\tilde \nabla_j \hat f_{x_t}(s_{x_t}^0)} \notag\\
            =   &\frac{\eta}{\lambda_j}\inner{\mE_{x_t,j}^0}{\mE_{x_t,j}^0} - \frac{\eta}{\lambda_j}\inner{\mE_{x_t,j}^0}{\tilde g_{x_t,j}(y_{x_t}^0;\mu)} \label{subeq.23a}\\
            =   &\frac{\eta}{\lambda_j}\inner{\mE_{x_t,j}^0}{\mE_{x_t,j}^0} + \frac{1}{\lambda_j}\inner{\mE_{x_t,j}^0}{\tilde s_{x_t,j}^{1} -  \tilde y_{x_t,j}^{0}} \label{subeq.23b}\\
            \ge &\frac{\eta}{\lambda_j}|\mE_{x_t,j}^0|^2 + \frac{1}{2\lambda_jB^{\frac{3}{2}}}|\mE_{x_t,j}^0|^{2} + \frac{B^{\frac{3}{2}}}{2\lambda_j}|\tilde s_{x_t,j}^{1} - \tilde y_{x_t,j}^{0}|^{2}, \label{subeq.23c} 
        \end{align}
    \end{subequations}
    where \eqref{subeq.23a} is due to $\mE_{x_t, j}^0 = \tilde g_{x_t, j}(y_{x_t}^0; \mu) - \nabla h_j (\tilde y_{x_t, j}^0)$ and $ \nabla h_j (\tilde y_{x_t, j}^0) =  \tilde \nabla_j \hat f_{x_t}(s_{x_t}^0)$, \eqref{subeq.23b} follows from $\tilde s_{x_t,j}^{1} =  \tilde y_{x_t,j}^{0} - \tilde g_{x_t,j}(y_{x_t}^0;\mu)$, and \eqref{subeq.23c} holds because $\lambda_j < 0$ and Young's inequality. Putting all the above inequalities together gives
    \begin{align*}
        &h_j(\tilde s_{x_t,j}^{k+1}) - h_j(\tilde s_{x_t,j}^{k}) \\
        \le &-\frac{\theta}{2\eta}|\tilde s_{x_t,j}^{k+1} - \tilde s_{x_t,j}^{k}|^2 + (1-\theta)^k\left(\eta|\mE_{x_t,j}^0|^2 + \frac{1}{2B^{\frac{3}{2}}}|\mE_{x_t,j}^0|^{2} + \frac{B^{\frac{3}{2}}}{2}|\tilde s_{x_t,j}^{1} - \tilde y_{x_t,j}^{0}|^{2}\right) + \frac{\eta}{2}\sum_{t=1}^k(1-\theta)^{k-t}|\mE_{x_t,j}^t|^2 \\
        \le &-\frac{\theta}{2\eta}|\tilde s_{x_t,j}^{k+1} - \tilde s_{x_t,j}^{k}|^2 +\eta|\mE_{x_t,j}^0|^2 + \frac{1}{2B^{\frac{3}{2}}}|\mE_{x_t,j}^0|^{2} + \frac{B^{\frac{3}{2}}}{2}|\tilde s_{x_t,j}^{1} - \tilde y_{x_t,j}^{0}|^{2} + \frac{\eta}{2}\sum_{t=1}^k(1-\theta)^{k-t}|\mE_{x_t,j}^t|^2.
    \end{align*}
    Summing over $j \in \mathcal{S}_2$ implies that
    \begin{subequations}
        \begin{align}
            &\sum_{j \in \mathcal{S}_2} h_j(\tilde s_{x_t,j}^{k+1}) - h_j(\tilde s_{x_t,j}^{k}) \notag \\
        \le &-\frac{\theta}{2\eta}\sum_{j \in \mathcal{S}_2}|\tilde s_{x_t,j}^{k+1} - \tilde s_{x_t,j}^{k}|^2 +\eta\sum_{j \in \mathcal{S}_2}|\mE_{x_t,j}^0|^2 + \frac{1}{2B^{\frac{3}{2}}}\sum_{j \in \mathcal{S}_2}|\mE_{x_t,j}^0|^{2} \notag \\
        &+ \frac{B^{\frac{3}{2}}}{2}\sum_{j \in \mathcal{S}_2}|\tilde s_{x_t,j}^{1} - \tilde y_{x_t,j}^{0}|^{2} + \frac{\eta}{2}\sum_{t=1}^k(1-\theta)^{k-t}\sum_{j \in \mathcal{S}_2}|\mE_{x_t,j}^t|^2 \notag \\
        \le &-\frac{\theta}{2\eta}\sum_{j \in \mathcal{S}_2}|\tilde s_{x_t,j}^{k+1} - \tilde s_{x_t,j}^{k}|^2 + \eta \mathbf{E}(\mu)^2 + \frac{\mathbf{E}(\mu)^{2}}{2B^{\frac{3}{2}}}\notag \\
        &+ \frac{B^{\frac{7}{2}}}{2} + \frac{\eta}{2}\sum_{t=1}^k(1-\theta)^{k-t}\left(2\mathbf{E}(\mu)^2 + 8\rho^2B^4\right), \label{subeq.27aa} \\
        \le&-\frac{\theta}{2\eta}\sum_{j \in \mathcal{S}_2}|\tilde s_{x_t,j}^{k+1} - \tilde s_{x_t,j}^{k}|^2 + \eta \mathbf{E}(\mu)^2 + \frac{\mathbf{E}(\mu)^{2}}{2B^{\frac{3}{2}}} + \frac{B^{\frac{7}{2}}}{2}  + \frac{\eta \mathbf{E}(\mu)^2}{\theta} + \frac{4\eta \rho^2 B^4}{\theta}, \notag
        \end{align}
    \end{subequations}
    where we apply $\sum_{j \in \mathcal{S}_2}|\tilde s_{x_t,j}^{1} - \tilde y_{x_t,j}^{0}|^{2} \le \|s_{x_t}^1 - y_{x_t}^0\|^2 \le B^2$ and the result of Lemma \ref{lemma.upper bound of mE} in \eqref{subeq.27aa}.
    Finally, we conclude
    \begin{subequations}
        \begin{align}
            &\sum_{j \in \mathcal{S}_2} h_j(\tilde s_{x_t,j}^\mK) \notag \\
            =   &\sum_{j \in \mathcal{S}_2} \sum_{k=0}^{\mKb} h_j(\tilde s_{x_t,j}^{k+1}) - h_j(\tilde s_{x_t,j}^{k}) + h_j(\tilde s_{x_t,j}^{0}) \notag \\
            \le &-\frac{\theta}{2\eta} \sum_{j \in \mathcal{S}_2} \sum_{k=0}^{\mKb}|\tilde s_{x_t,j}^{k+1} - \tilde s_{x_t,j}^{k}|^2 + \eta\mK \mathbf{E}(\mu)^2 + \frac{\mK}{2B^{\frac{3}{2}}}\mathbf{E}(\mu)^{2} + \frac{\mK B^{\frac{7}{2}}}{2} + \frac{\eta\mK \mathbf{E}(\mu)^2}{\theta} + \frac{4\eta \mK\rho^2 B^4}{\theta} \notag.
        \end{align}
    \end{subequations}
\end{proof}
\begin{corollary}\label{coro.decrease tss}
     Suppose that Assumption \ref{assum.lipschitz gradient} and \ref{assum.lipschitz hessian} hold. Under the parameter setting \eqref{eq.parameter setting}, choose a reasonably small $\mu$ such that $\mathbf{E}(\mu) \le \frac{lB}{2}$ in Algorithm \ref{alg.AZO}. Then, for the tangent space step at iterate $x_t$, when the ``if condition" triggers, we have:
    \begin{equation*}
         \hat f_{x_t}(s_{x_t}^\mK) \le \hat f_{x_t}(s_{x_t}^0) -\frac{3\theta B^2}{8\eta K} + \eta K \mathbf{E}(\mu)^2 + \frac{K}{2B^{\frac{3}{2}}}\mathbf{E}(\mu)^{2} + \frac{K B^{\frac{7}{2}}}{2} + \frac{5\eta K \mathbf{E}(\mu)^2}{\theta} + \frac{20\eta K\rho^2 B^4}{\theta} + \frac{32\rho B^3}{3}.
    \end{equation*}
\end{corollary}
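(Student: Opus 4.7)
The plan is to assemble the corollary directly from the three lemmas that precede it. Starting from Lemma \ref{lemma.tss function value with h}, I would first decompose the sum $\sum_{j=1}^d h_j(\tilde s_{x_t,j}^{\mathcal{K}})$ according to the sign of the eigenvalues, namely across $\mathcal{S}_1 = \{j : \lambda_j \ge -\theta/\eta\}$ and $\mathcal{S}_2 = \{j : \lambda_j < -\theta/\eta\}$, and substitute the bounds from Lemma \ref{lemma.tss s1} and Lemma \ref{lemma.tss s2} respectively. Adding the two bounds, the ``positive'' error contributions line up cleanly: the two $\eta\mathcal{K}\mathbf{E}(\mu)^2/\theta$-type terms combine as $4/\theta+1/\theta = 5/\theta$, and the two $\eta\mathcal{K}\rho^2 B^4/\theta$-type terms combine as $16/\theta + 4/\theta = 20/\theta$, matching the stated coefficients.

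Next I would handle the two negative quadratic sums. Since $\frac{3\theta}{8\eta} < \frac{\theta}{2\eta}$, I can replace the coefficient on the $\mathcal{S}_2$ sum by $\frac{3\theta}{8\eta}$ (this only weakens the bound) and merge the two partial sums into a single sum over $j=1,\dots,d$. Using the identity $\sum_{j=1}^d |\tilde s_{x_t,j}^{k+1}-\tilde s_{x_t,j}^k|^2 = \|s_{x_t}^{k+1}-s_{x_t}^k\|^2$ (valid because $\{u_j\}$ is an orthonormal basis of $\mathrm{T}_{x_t}\mathcal{M}$), this consolidates to
\begin{equation*}
-\frac{3\theta}{8\eta}\sum_{k=0}^{\mathcal{K}-1}\|s_{x_t}^{k+1}-s_{x_t}^k\|^2.
\end{equation*}

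The crux — the only step that actually uses the hypothesis that the ``if condition'' on Line 7 of Subroutine \ref{sub.TSS} triggers — is invoking the definition of $\mathcal{K}$, which gives $\mathcal{K}\sum_{k=0}^{\mathcal{K}-1}\|s_{x_t}^{k+1}-s_{x_t}^k\|^2 > B^2$, hence $\sum_{k=0}^{\mathcal{K}-1}\|s_{x_t}^{k+1}-s_{x_t}^k\|^2 > B^2/\mathcal{K}$. Combined with $\mathcal{K} \le K$, this yields $-\frac{3\theta}{8\eta}\sum_{k}\|s_{x_t}^{k+1}-s_{x_t}^k\|^2 \le -\frac{3\theta B^2}{8\eta\mathcal{K}} \le -\frac{3\theta B^2}{8\eta K}$, which is exactly the negative term appearing in the corollary.

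Finally, I would upper bound every remaining $\mathcal{K}$ in the positive error terms by $K$ (since all coefficients of these terms are positive), and add back the $\frac{32\rho B^3}{3}$ term from Lemma \ref{lemma.tss function value with h}. Collecting everything yields the claimed inequality. I do not anticipate any real obstacle here — this is essentially a bookkeeping step. The only point requiring attention is ensuring that the weakening $-\frac{\theta}{2\eta} \to -\frac{3\theta}{8\eta}$ on $\mathcal{S}_2$ is done in the correct direction (i.e.\ it produces a valid upper bound on $\hat f_{x_t}(s_{x_t}^{\mathcal{K}})$), and that the inequality $\mathcal{K}\le K$ is applied to $\mathcal{K}$ in the numerator of the positive terms but to $\mathcal{K}$ in the denominator of the negative term in a mutually consistent (upper-bounding) way.
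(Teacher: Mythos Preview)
Your proposal is correct and follows essentially the same route as the paper's own proof: add the bounds from Lemma~\ref{lemma.tss s1} and Lemma~\ref{lemma.tss s2}, weaken the $-\tfrac{\theta}{2\eta}$ coefficient on $\mathcal{S}_2$ to $-\tfrac{3\theta}{8\eta}$, merge into the full sum via orthonormality, invoke the trigger condition $\mathcal{K}\sum_k\|s_{x_t}^{k+1}-s_{x_t}^k\|^2 > B^2$, replace $\mathcal{K}$ by $K$ throughout (in the upper-bounding direction), and then combine with Lemma~\ref{lemma.tss function value with h}. Your bookkeeping of the coefficients ($4/\theta+1/\theta=5/\theta$, $16/\theta+4/\theta=20/\theta$) matches the paper exactly.
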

\begin{proof}
    Putting Lemma \ref{lemma.tss s1} and Lemma \ref{lemma.tss s2} together, we obtain
    \begin{align*}
         &\sum_{j=1}^d h_j(\tilde s_{x_t,j}^\mK) \\
         \le &-\frac{3\theta}{8\eta}\sum_{k=0}^{\mKb}\|\tilde s_{x_t}^{k+1} - \tilde s_{x_t}^{k}\|^2 + \eta\mK \mathbf{E}(\mu)^2 + \frac{\mK}{2B^{\frac{3}{2}}}\mathbf{E}(\mu)^{2} + \frac{\mK B^{\frac{7}{2}}}{2} + \frac{5\eta\mK \mathbf{E}(\mu)^2}{\theta} + \frac{20\eta \mK\rho^2 B^4}{\theta} \\
         \le &-\frac{3\theta B^2}{8\eta\mK} + \eta\mK \mathbf{E}(\mu)^2 + \frac{\mK}{2B^{\frac{3}{2}}}\mathbf{E}(\mu)^{2} + \frac{\mK B^{\frac{7}{2}}}{2} + \frac{5\eta\mK \mathbf{E}(\mu)^2}{\theta} + \frac{20\eta \mK\rho^2 B^4}{\theta} \\
         \le &-\frac{3\theta B^2}{8\eta K} + \eta K \mathbf{E}(\mu)^2 + \frac{K}{2B^{\frac{3}{2}}}\mathbf{E}(\mu)^{2} + \frac{K B^{\frac{7}{2}}}{2} + \frac{5\eta K \mathbf{E}(\mu)^2}{\theta} + \frac{20\eta K\rho^2 B^4}{\theta},
    \end{align*}
    combining with Lemma \ref{lemma.tss function value with h} completes the proof.
\end{proof}

\subsection{Tangent space step: small Riemannian gradient}
For the scenario that the ``if condition" does not trigger in the tangent space step, we establish that the tangent space step outputs a satisfactory point with a small Riemannian gradient.
\begin{lemma}\label{lemma.small gradient}
     Suppose that Assumption \ref{assum.lipschitz gradient}, \ref{assum.lipschitz hessian} and \ref{assum.retraction} hold. Under the parameter setting \eqref{eq.parameter setting}, choose a reasonably small $\mu$ such that $\mathbf{E}(\mu) \le \frac{lB}{2}$ in Algorithm \ref{alg.AZO}.Then, for the tangent space step at iterate $x_t$, when the ``if condition" does not trigger, we have:
    \begin{equation*}
        \|\grad f(x_{t+1})\| \le \frac{1}{\sigma_{\min}}\cdot (2\rho B^2 + \frac{2\sqrt{2}B}{K^2\eta} + \frac{2\theta B}{K\eta} + \left(2\mathbf{E}(\mu)^2 + 8\rho^2B^4\right)^{\frac{1}{2}}).
    \end{equation*}
\end{lemma}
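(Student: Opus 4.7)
The plan is to reduce the target bound on $\|\grad f(x_{t+1})\|$ to a bound on $\|\nabla\hat f_{x_t}(y_{x_t}^*)\|$ and then control that tangent-space quantity by comparing the averaged iterates with the linearization of $\nabla\hat f_{x_t}$ at $s_{x_t}^0$. The reduction is immediate: since $x_{t+1}=\retr_{x_t}(y_{x_t}^*)$, Lemma~\ref{lemma.pullback gradient hessian} gives $\nabla\hat f_{x_t}(y_{x_t}^*)=T_{x_t,y_{x_t}^*}^*\grad f(x_{t+1})$, and the singular-value lower bound in Assumption~\ref{assum.retraction} produces the $\sigma_{\min}^{-1}$ prefactor. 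So the substance of the proof lies in bounding $\|\nabla\hat f_{x_t}(y_{x_t}^*)\|$ by the three terms on the right-hand side.

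I would next establish that $y_{x_t}^*$ stays in the ball of radius $2B$ around $s_{x_t}^0$, an easy corollary of Lemma~\ref{fact} applied to every $y_{x_t}^k$ since $y_{x_t}^*$ is their mean. Hessian Lipschitzness (Assumption~\ref{assum.lipschitz hessian}) then yields
\begin{equation*}
\bigl\|\nabla\hat f_{x_t}(y_{x_t}^*)-\nabla\hat f_{x_t}(s_{x_t}^0)-\nabla^2\hat f_{x_t}(s_{x_t}^0)(y_{x_t}^*-s_{x_t}^0)\bigr\|\le 2\rho B^2,
\end{equation*}
which supplies the first term. Expanding in the eigenbasis $\{u_j\}$, the linearization reads $\sum_j\nabla h_j(\tilde y^*_{x_t,j})u_j$; because each $h_j$ is quadratic, $\nabla h_j$ is affine and therefore commutes with the averaging that defines $y_{x_t}^*$. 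This is the key structural observation, and it lets me rewrite the linearization as $\frac{1}{K_0+1}\sum_{k=0}^{K_0}\sum_j\nabla h_j(\tilde y^k_{x_t,j})u_j$. Substituting $\nabla h_j(\tilde y^k_{x_t,j})=\tilde g_{x_t,j}(y_{x_t}^k;\mu)-\mE_{x_t,j}^k$ and invoking Lemma~\ref{lemma.upper bound of mE} bounds the averaged noise contribution by $\sqrt{2\mathbf{E}(\mu)^2+8\rho^2 B^4}$ uniformly in $k$, supplying the third term.

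The remaining and most delicate step is a telescoping argument for $\|\frac{1}{K_0+1}\sum_k g_{x_t}(y_{x_t}^k;\mu)\|$. I would rewrite the momentum update in the form $\eta g_{x_t}(y_{x_t}^k;\mu)=-(s_{x_t}^{k+1}-s_{x_t}^k)+(1-\theta)(s_{x_t}^k-s_{x_t}^{k-1})$ and sum with the initial condition $s_{x_t}^{-1}=s_{x_t}^0$; the sum should collapse to $-(s_{x_t}^{K_0+1}-s_{x_t}^{K_0})-\theta(s_{x_t}^{K_0}-s_{x_t}^0)$. The first difference is handled by combining the non-triggering condition $\sum_{k=0}^{K-1}\|s_{x_t}^{k+1}-s_{x_t}^k\|^2\le B^2/K$ with the fact that $K_0$ minimizes over the last $K-\lfloor K/2\rfloor\ge K/2$ indices, yielding $\|s_{x_t}^{K_0+1}-s_{x_t}^{K_0}\|\le\sqrt{2}B/K$; the second is $\le B$ by Lemma~\ref{fact}. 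Dividing by $\eta(K_0+1)\ge\eta K/2$ gives $\frac{2\theta B}{K\eta}+\frac{2\sqrt{2}B}{K^2\eta}$, and assembling all three contributions with the $\sigma_{\min}^{-1}$ factor completes the proof.

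The main obstacle I anticipate is ensuring that the $\rho B^2$-scale linearization error does not double-count the $\rho^2 B^4$ term already baked into Lemma~\ref{lemma.upper bound of mE}'s bound on $\mE_{x_t,j}^k$. Routing the argument through the eigenbasis and exploiting the affine structure of $\nabla h_j$ is what keeps these two sources of error on distinct levels, one at $y_{x_t}^*$ and one at each individual $y_{x_t}^k$, so that the final estimate adds them cleanly rather than conflating them.
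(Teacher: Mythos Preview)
Your proposal is correct and follows essentially the same route as the paper: reduce to $\|\nabla\hat f_{x_t}(y_{x_t}^*)\|$ via the singular-value bound on $T_{x_t,y_{x_t}^*}$, peel off the $2\rho B^2$ linearization error using Hessian Lipschitzness, exploit the affinity of $\nabla h_j$ to commute with the averaging defining $y_{x_t}^*$, and telescope the momentum recursion to produce the $\frac{2\sqrt2 B}{K^2\eta}$ and $\frac{2\theta B}{K\eta}$ terms while bounding the $\mE$ contribution via Lemma~\ref{lemma.upper bound of mE}. The only cosmetic difference is that you split off $\mE_{x_t,j}^k$ first and telescope $g_{x_t}(y_{x_t}^k;\mu)$, whereas the paper telescopes $\nabla h_j(\tilde y_{x_t,j}^k)$ directly and carries $\mE_{x_t,j}^k$ through the sum; since $\nabla h_j=\tilde g_{x_t,j}-\mE_{x_t,j}$, these are the same computation.
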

\begin{proof}
    According to Subroutine \ref{sub.TSS}, when the "if condition" does not trigger, the tangent space step outputs $x_{t+1} = \retr_{x_t}(y_{x_t}^*)$, and thus, it holds that:
    \begin{equation*}
        \grad f(x_{t+1}) =  \grad f(\retr_{x_t}(y_{x_t}^*)) = (T_{x_t, y_{x_t}^*}^*)^{-1} \nabla \hat f_{x_t}(y_{x_t}^*).
    \end{equation*}
    Therefore, it is sufficient to upper bound the term $\|\nabla \hat f_{x_t}(y_{x_t}^*)\|$. By applying Minkowski's inequality, we have 
    \begin{equation*}
        \|\nabla \hat f_{x_t}(y_{x_t}^*)\| = \left(\sum_{j=1}^d |\tilde \nabla_j \hat f_{x_t}(y_{x_t}^*)|^2\right)^{\frac{1}{2}} \le \left(\sum_{j=1}^d |\tilde \nabla_j \hat f_{x_t}(y_{x_t}^*) - \nabla h_j(\tilde y_{x_t,j}^*)|^2\right)^{\frac{1}{2}} + \left(\sum_{j=1}^d |\nabla h_j(\tilde y_{x_t,j}^*)|^2\right)^{\frac{1}{2}}.
    \end{equation*}
    Note that $\nabla h_j(\tilde y_{x_t,j}^*) = \tilde \nabla_j \hat f_{x_t}(s_{x_t}^0) + \lambda_j(\tilde y_{x_t,j}^* - \tilde s_{x_t,j})$, using the same argument in the proof of Lemma \ref{lemma.upper bound of mE} gives
    \begin{subequations}
        \begin{align}
        &\left(\sum_{j=1}^d |\tilde \nabla_j \hat f_{x_t}(y_{x_t}^*) - \nabla h_j(\tilde y_{x_t,j}^*)|^2\right)^{\frac{1}{2}} \notag\\
        = &\left(\sum_{j=1}^d |\tilde \nabla_j \hat f_{x_t}(y_{x_t}^*) - \tilde \nabla_j \hat f_{x_t}(s_{x_t}^0) - \lambda_j(\tilde y_{x_t,j}^* - \tilde s_{x_t,j})|^2\right)^{\frac{1}{2}} \notag \\
        = &\|\nabla \hat f_{x_t}(y_{x_t}^*) - \nabla \hat f_{x_t}(s_{x_t}^0) - \nabla^2 \hat f_{x_t}(s_{x_t}^0)(y_{x_t}^* - s_{x_t}^0)\| \notag \\
        \le &\frac{\rho}{2}\|y_{x_t}^* - s_{x_t}^0\|^2 \label{subeq.27b} \\
        \le &2\rho B^2, \label{subeq.distance}
        \end{align}
    \end{subequations}
    where \eqref{subeq.27b} is due to the Lipschitz continuity of $\nabla^2 \hat f_{x_t}(\cdot)$ and the \eqref{subeq.distance} comes from the following result
    \begin{equation*}
        \|y_{x_t}^* - s_{x_t}^0\| \le \frac{1}{K_0 + 1}\sum_{k=0}^{K_0} \|y_{x_t}^k - s_{x_t}^0\| \le 2B.
    \end{equation*}
    For the term $\left(\sum_{j=1}^d |\nabla h_j(\tilde y_{x_t,j}^*)|^2\right)^{\frac{1}{2}}$, note that $\tilde y_{x_t,j}^* = \frac{1}{K_0 + 1}\sum_{k=0}^{K_0} \tilde y_{x_t,j}^k$ and $\nabla h_j(\cdot)$ is a one-dimensional linear function, it equivalents as
    \begin{equation*}
       \left(\sum_{j=1}^d |\nabla h_j(\tilde y_{x_t,j}^*)|^2\right)^{\frac{1}{2}} =  \left(\sum_{j=1}^d \left|\nabla h_j\left(\frac{1}{K_0 + 1}\sum_{k=0}^{K_0} \tilde y_{x_t,j}^k\right)\right|^2\right)^{\frac{1}{2}} = \frac{1}{K_0 + 1}\left(\sum_{j=1}^d \left|\sum_{k=0}^{K_0}\nabla h_j(\tilde y_{x_t,j}^k)\right|^2\right)^{\frac{1}{2}}.
    \end{equation*}
    Recall the update \eqref{eq.agd for h}, we have
    \begin{align*}
        \eta\nabla h_j(\tilde y_{x_t,j}^k)  &= \tilde s_{x_t,j}^{k+1} - \tilde y_{x_t,j}^k + \eta \mE_{x_t,j}^k \\
        &= \tilde s_{x_t,j}^{k+1} - \tilde s_{x_t,j}^k  - (1-\theta)(\tilde s_{x_t,j}^{k} - \tilde s_{x_t,j}^{k-1})+ \eta \mE_{x_t,j}^k.
    \end{align*}
    Consequently, it holds that
    \begin{subequations}
        \begin{align}
           &\left(\sum_{j=1}^d |\nabla h_j(\tilde y_{x_t,j}^*)|^2\right)^{\frac{1}{2}} \notag \\
           =   &\frac{1}{(K_0 + 1)\eta}\left(\sum_{j=1}^d \left|s_{x_t,j}^{K_0} - \tilde s_{x_t,j}^{0} + \theta(\tilde s_{x_t,j}^{K_0} - \tilde s_{x_t,j}^{0}) + \eta\sum_{k=0}^{K_0} \mE_{x_t,j}^k\right|^2\right)^{\frac{1}{2}} \notag \\
           \le &\frac{1}{(K_0 + 1)\eta}\left(\sum_{j=1}^d \left|\tilde s_{x_t,j}^{K_0+1} - \tilde s_{x_t,j}^{K_0}\right|^2\right)^{\frac{1}{2}} + \frac{\theta}{(K_0 + 1)\eta}\left(\sum_{j=1}^d \left|\tilde s_{x_t,j}^{K_0} - \tilde s_{x_t,j}^{0}\right|^2\right)^{\frac{1}{2}} + \frac{1}{K_0 + 1} \left(\sum_{j=1}^d\left|\sum_{k=0}^{K_0}\mE_{x_t,j}^k\right|^2\right)^{\frac{1}{2}} \notag \\
           \le &\frac{1}{(K_0 + 1)\eta}\left\|s_{x_t}^{K_0+1} - s_{x_t}^{K_0}\right\| + \frac{\theta}{(K_0 + 1)\eta}\left\|s_{x_t}^{K_0} - s_{x_t}^{0}\right\| + \frac{1}{\sqrt{K_0+1}}\left(\sum_{k=0}^{K_0}\sum_{j=1}^d|\mE_{x_t,j}^k|^2 \right)^{\frac{1}{2}} \label{subeq.28b} \\
           \le &\frac{2}{K\eta}\left\|s_{x_t}^{K_0+1} - s_{x_t}^{K_0}\right\| + \frac{2\theta B}{K\eta} + \left(2\mathbf{E}(\mu)^2 + 8\rho^2B^4\right)^{\frac{1}{2}} \label{subeq.28c}
        \end{align}
    \end{subequations}
    where \eqref{subeq.28b} holds because $\left|\sum_{k=0}^{K_0}\mE_{x_t,j}^k\right|^2 \le (K_0 + 1)\sum_{k=0}^{K_0}|\mE_{x_t,j}^k|^2$, and \eqref{subeq.28c} follows from the $K_0 + 1 \ge \frac{K}{2}$, $\theta \le 1$, Lemma \ref{fact} and \ref{lemma.upper bound of mE}. Recall the definition of $K_0$, we obtain
    \begin{align*}
            &\left\|s_{x_t}^{K_0+1} - s_{x_t}^{K_0}\right\|^2 \\
        \le &\frac{1}{K-\lfloor K/2 \rfloor} \sum_{k=\lfloor K/2 \rfloor}^{K-1}\|s_{x_t}^{k+1} - s_{x_t}^{k}\|^2 \\
        \le &\frac{1}{K-\lfloor K/2 \rfloor} \sum_{k=0}^{K-1}\|s_{x_t}^{k+1} - s_{x_t}^{k}\|^2 \\
        \le &\frac{2B^2}{K^2}.
    \end{align*}
    Therefore, combining all the above inequalities gives that
    \begin{align*}
        &\|\grad f(x_{t+1})\| \\
        \le &\|(T_{x_t, y_{x_t}^*}^*)^{-1}\| \cdot (2\rho B^2 + \frac{2\sqrt{2}B}{K^2\eta} + \frac{2 B}{K\eta} + \left(2\mathbf{E}(\mu)^2 + 8\rho^2B^4\right)^{\frac{1}{2}}) \\
        \le &\frac{1}{\sigma_{\min}} \cdot (2\rho B^2 + \frac{2\sqrt{2}B}{K^2\eta} + \frac{2\theta B}{K\eta} + \left(2\mathbf{E}(\mu)^2 + 8\rho^2B^4\right)^{\frac{1}{2}}),
    \end{align*}
    where the last inequality comes from Assumption \ref{assum.retraction}.
\end{proof}

\subsection{Proof of Theorem \ref{thm.fosp}}
\begin{theorem}[Theorem \ref{thm.fosp} restated]
    Suppose that Assumption \ref{assum.lipschitz gradient}, \ref{assum.lipschitz hessian} and \ref{assum.retraction} hold. Set the parameters in Algorithm \ref{alg.AZO} as follows
    \begin{equation}\label{eq.parameter setting fosp}
       \eta = \frac{1}{4l}, ~B=\frac{1}{8}\sqrt{\frac{\epsilon}{\rho}}, ~\theta = \frac{\rho^{\frac{7}{4}}\epsilon^{\frac{1}{4}}}{l}, ~r=0, ~K = \frac{\rho^{\frac{5}{4}}}{4\epsilon^{\frac{1}{4}}}.
    \end{equation}
     For any $x_0 \in \M$ and sufficiently small $\epsilon > 0$, 
     choose $\mu = \mathcal{O}\left(\frac{\epsilon^{1/4}}{d^{1/4}}\right)$ in Lines 3 of Algorithm \ref{alg.AZO}, and $\mu = \mathcal{O}\left(\frac{\epsilon^{5/8}}{d^{1/4}}\right)$ in Line 5 of Subroutine \ref{sub.TSS}. Then Algorithm \ref{alg.AZO} with Option I outputs an $\epsilon$-approximate first-order stationary point. The total number of function value evaluations is no more than
    \begin{equation}
        \mathcal{O}\left(\frac{(f(x_0) - f_{\operatorname{low}})d}{\epsilon^{\frac{7}{4}}}\right). \notag
    \end{equation}
\end{theorem}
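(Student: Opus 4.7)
}
The plan is to show that, with the prescribed parameters, every outer iteration of Algorithm \ref{alg.AZO} either \emph{makes progress} (decreases $f$ by at least $\Omega(\epsilon^{3/2})$) or \emph{terminates successfully} (produces an $\epsilon$-approximate RFOSP). Since $f$ is bounded below by $f_{\operatorname{low}}$, this will bound the number of outer iterations, and the query count per iteration then yields the stated complexity. First, I would verify that the choices $B=\tfrac{1}{8}\sqrt{\epsilon/\rho}$, $\theta=\rho^{7/4}\epsilon^{1/4}/l$, and $\mu=\mathcal{O}(\epsilon^{1/4}/d^{1/4})$ for Line~3 / $\mu=\mathcal{O}(\epsilon^{5/8}/d^{1/4})$ for Line~5 of Subroutine~\ref{sub.TSS} imply the hypothesis $\mathbf{E}(\mu)\le lB/2$ required by Lemmas~\ref{lemma.function value decrease large gradient}--\ref{lemma.small gradient}; for Line~5 one uses the sharper bound $\mathbf{E}(\mu)\le \rho\mu^2\sqrt{d}/6$ from Lemma~\ref{lemma.zero order estimator error}, giving $\mathbf{E}(\mu)=\mathcal{O}(\epsilon^{5/4})$.

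Next I would do a three-way case split at iterate $x_t$. \emph{Case (i):} if $\|g_{x_t}(0;\mu)\|\ge lB$, Lemma~\ref{lemma.function value decrease large gradient} delivers $f(x_{t+1})\le f(x_t)-\min\{lB^2/16,\,lb^2\}=f(x_t)-\Omega(\epsilon)$. \emph{Case (ii):} if $\|g_{x_t}(0;\mu)\|<lB$ and the ``if'' condition inside \TSS{} triggers, apply Corollary~\ref{coro.decrease tss}. Here the leading negative term is $-\tfrac{3\theta B^2}{8\eta K}=-\Theta(\epsilon^{3/2}/\sqrt{\rho})$, while direct substitution shows that each positive term is of order at most $\epsilon^{3/2}$ with explicitly smaller constant (the smoothing error contributions scale as $\mathcal{O}(\epsilon^{9/4})$ and $\mathcal{O}(\epsilon^2)$ using $\mathbf{E}(\mu)^2=\mathcal{O}(\epsilon^{5/2})$, and the $KB^{7/2}$, $\eta K\rho^2 B^4/\theta$, and $\rho B^3$ terms are all $\Theta(\epsilon^{3/2})$ but controlled by the small numerical constants coming from $B=\tfrac{1}{8}\sqrt{\epsilon/\rho}$). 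This is the main technical obstacle: several error terms share the same order $\epsilon^{3/2}$ as the progress term, so I need to track constants carefully rather than relying on $\mathcal{O}$-arithmetic and choose the smoothing-parameter constants small enough to leave a net decrease of at least $c_0\epsilon^{3/2}$ for an explicit $c_0>0$. \emph{Case (iii):} if the ``if'' condition does not trigger, Lemma~\ref{lemma.small gradient} yields $\|\grad f(x_{t+1})\|\le \sigma_{\min}^{-1}\bigl(2\rho B^2+2\sqrt{2}B/(K^2\eta)+2\theta B/(K\eta)+\sqrt{2\mathbf{E}(\mu)^2+8\rho^2 B^4}\bigr)$; each of the four terms evaluates to $\mathcal{O}(\epsilon)$ under the parameter choice, so $x_{t+1}$ is an $\epsilon$-approximate RFOSP and the algorithm has succeeded.

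Finally, since cases (i) and (ii) both strictly decrease $f$ by at least $\Omega(\epsilon^{3/2})$ while $f\ge f_{\operatorname{low}}$, the number $T$ of outer iterations before case (iii) occurs satisfies $T=\mathcal{O}((f(x_0)-f_{\operatorname{low}})/\epsilon^{3/2})$. The query cost per outer iteration is $2d$ for the estimator at Line~4 plus, when \TSS{} runs, at most $2dK$ additional evaluations in its inner loop (Line~5), for a total of $\mathcal{O}(dK)=\mathcal{O}(d\epsilon^{-1/4})$ queries per outer iteration. Multiplying gives the claimed bound $\mathcal{O}\bigl((f(x_0)-f_{\operatorname{low}})d\,\epsilon^{-7/4}\bigr)$. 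The hard part, as noted, is the constant-level accounting in Case (ii); once that is pinned down the outer counting argument is routine.
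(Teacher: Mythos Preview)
Your proposal is correct and matches the paper's proof essentially step for step: the same three-way case split, the same invocation of Lemma~\ref{lemma.function value decrease large gradient}, Corollary~\ref{coro.decrease tss}, and Lemma~\ref{lemma.small gradient}, and the same outer-iteration count multiplied by $\mathcal{O}(dK)$ queries per iteration. One small thing to flag for when you write it up: among the smoothing-error contributions in Case~(ii), the term $\tfrac{K}{2B^{3/2}}\mathbf{E}(\mu)^2$ is actually $\mathcal{O}(\epsilon^{3/2})$ rather than higher order (the paper records it as such), so it too belongs to the family of ``same-order'' terms you must absorb by taking the implicit constant in $\mu=\mathcal{O}(\epsilon^{5/8}/d^{1/4})$ small enough---exactly the mechanism you already describe.
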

\begin{proof}
    Given an iterate $x_t$, in the large estimator scenario where $\|g_{x_t}(0;\mu)\| \ge lB$, we choose $\mu = \mathcal{O}\left(\frac{\epsilon^{1/4}}{d^{1/4}}\right)$; combining this with Lemma \ref{lemma.zero order estimator error} results in $\mathbf{E}(\mu) \le \mathcal{O}\left(\sqrt{\epsilon}\right)$. Consequently, Lemma \ref{lemma.function value decrease large gradient} yields:
    \begin{equation}
        f(x_{t+1}) - f(x_t) \le - \min\left\{\frac{lB^2}{16}, lb^2\right\} = -\frac{l\epsilon}{1024\rho}. \notag
    \end{equation}
    For the small estimator scenario where $\|g_{x_t}(0;\mu)\| \le lB$, the Algorithm \ref{alg.AZO} switches to Subroutine \ref{sub.TSS}, i.e. the tangent space step. Note that $r = 0$, it implies
    \begin{equation*}
        f(x_{t}) = \hat f_{x_t}(s_{x_t}^0).
    \end{equation*}
    Moreover, we choose $\mu = \mathcal{O}\left(\frac{\epsilon^{5/8}}{d^{1/4}}\right)$ in Subroutine \ref{sub.TSS}, resulting in $\mathbf{E}(\mu) \le \mathcal{O}\left(\epsilon^{5/4}\right)$. Thus, when the ``if condition" triggers, from Corollary \ref{coro.decrease tss}, we have
    \begin{subequations}
        \begin{align}
        &f(x_{t+1}) - f(x_{t}) \notag \\
        = &\hat f_{x_t}(s_{x_t}^\mK) - \hat f_{x_t}(s_{x_t}^0) \notag \\
        \le &-\frac{3\theta B^2}{8\eta K} + \eta K \mathbf{E}(\mu)^2 + \frac{K}{2B^{\frac{3}{2}}}\mathbf{E}(\mu)^{2} + \frac{K B^{\frac{7}{2}}}{2} + \frac{5\eta K \mathbf{E}(\mu)^2}{\theta} + \frac{20\eta K\rho^2 B^4}{\theta} + \frac{32\rho B^3}{3} \notag \\
        \le &-\frac{\epsilon^{\frac{3}{2}}}{24\sqrt{\rho}} + \eta K \mathbf{E}(\mu)^2 + \frac{K}{2B^{\frac{3}{2}}}\mathbf{E}(\mu)^{2} + \frac{5\eta K \mathbf{E}(\mu)^2}{\theta} \notag \\ 
        \le &-\frac{\epsilon^{\frac{3}{2}}}{24\sqrt{\rho}} + \mathcal{O}\left(\epsilon^{\frac{9}{4}}\right) + \mathcal{O}\left(\epsilon^{\frac{3}{2}}\right) + \mathcal{O}\left(\epsilon^{2}\right) \notag \\
        \le &-\frac{\epsilon^{\frac{3}{2}}}{32\sqrt{\rho}}. \notag
        \end{align}
    \end{subequations}
    When the ``if condition" does not trigger, Lemma \ref{lemma.small gradient} tells 
    \begin{subequations}
        \begin{align}
        & \|\grad f(x_{t+1})\| \notag \\
       \le &\frac{1}{\sigma_{\min}}\cdot (2\rho B^2 + \frac{2\sqrt{2}B}{K^2\eta} + \frac{2\theta B}{K\eta} + \left(2\mathbf{E}(\mu)^2 + 8\rho^2B^4\right)^{\frac{1}{2}}) \notag \\
       \le &\frac{1}{\sigma_{\min}}\cdot (2\rho B^2 + \frac{2\sqrt{2}B}{K^2\eta} + \frac{2\theta B}{K\eta} + 4\rho B^2) \label{subeq.fosp tss small gradient 1}\\
       \le &\mathcal{O}(\epsilon), \label{subeq.fosp tss small gradient 2}
        \end{align}
    \end{subequations}
    where \eqref{subeq.fosp tss small gradient 1} holds because $\mathbf{E}(\mu)^2 \le \mathcal{O}\left(\epsilon^{5/2}\right)$ and $8\rho^2 B^4 = \mathcal{O}\left(\epsilon^2\right)$, and \eqref{subeq.fosp tss small gradient 2} comes from the parameter setting \eqref{eq.parameter setting fosp}. Therefore, at each iteration $t$, once $\norm{g_{x_t}(0;\mu)} \ge lB$ holds or the ``if condition" does not trigger in the tangent space step, we observe the following function value decrease:
    \begin{equation*}
        f(x_{t+1}) - f(x_t) \le -\min \left\{\frac{l\epsilon}{1024\rho}, \frac{\epsilon^{\frac{3}{2}}}{32\sqrt{\rho}}\right\} = -\frac{\epsilon^{\frac{3}{2}}}{32\sqrt{\rho}}.
    \end{equation*}
    Otherwise, if the 'if condition' does not trigger, $x_{t+1}$ is already an $\epsilon$-approximate first-order stationary point. As the tangent space step requires at most $K = \mathcal{O}\left(\epsilon^{-1/4}\right)$ iterations, and each iterate needs $2d$ function value evaluations to construct the zeroth-order estimator, the total number of function value evaluations must be less than
    \begin{equation*}
        O\left(\frac{(f(x_0) - f_{\operatorname{low}})d}{\epsilon^{\frac{7}{4}}}\right).\notag
    \end{equation*}
\end{proof}

\subsection{Proof of Theorem \ref{thm.sosp}}
\begin{theorem}[Theorem \ref{thm.sosp} restated]
    Suppose that Assumption \ref{assum.lipschitz gradient}, \ref{assum.lipschitz hessian} and \ref{assum.retraction} hold. Set the parameters in Algorithm \ref{alg.AZO} as follows
    \begin{equation}\label{eq.parameter setting sosp}
       \eta = \frac{1}{4l}, ~\chi = \mathcal{O}\left(\log \frac{d}{\delta \epsilon} \right) \ge 1, ~B=\frac{1}{8\chi^2}\sqrt{\frac{\epsilon}{\rho}}, ~\theta = \frac{\rho^{\frac{7}{4}}\epsilon^{\frac{1}{4}}}{l}, ~r=\frac{\theta B}{6 K}, ~K = \frac{\chi\rho^{\frac{5}{4}}}{4\epsilon^{\frac{1}{4}}}.
    \end{equation}
     For any $x_0 \in \M$ and sufficiently small $\epsilon > 0$, 
     choose $\mu = \mathcal{O}\left(\frac{\epsilon^{1/4}}{d^{1/4}\chi}\right) = \tilde{\mathcal{O}}\left(\frac{\epsilon^{1/4}}{d^{1/4}}\right)$ in Lines 3 of Algorithm \ref{alg.AZO}, and $\mu = \min\left\{\mathcal{O}\left(\frac{\epsilon^{5/8}}{d^{1/4}\chi^2}\right),\mathcal{O}\left(\frac{\epsilon^{7/8}}{\chi^3\sqrt{d}}\right)\right\} =\tilde{\mathcal{O}}\left(\frac{\epsilon^{7/8}}{\sqrt{d}}\right)$ in Line 5 of Subroutine \ref{sub.TSS}. Then perturbed Algorithm \ref{alg.AZO} with Option I outputs an $\epsilon$-approximate second-order stationary point with a probability of at least $1-\delta$. The total number of function value evaluations is no more than
    \begin{equation}
    O\left(\frac{(f(x_0) - f_{\operatorname{low}})d}{\epsilon^{\frac{7}{4}}}\log^6\left(\frac{d}{\delta\epsilon}\right)\right). \notag
    \end{equation}
\end{theorem}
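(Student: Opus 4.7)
The plan is to extend the three-way case analysis of Theorem \ref{thm.fosp} with an additional argument for escaping Riemannian strict saddles via the perturbation $\xi \sim \Uniform(\mathbb{B}_{x_t,r}(0))$ introduced in Subroutine \ref{sub.TSS}. At each iterate $x_t$, I would distinguish: (i) the zeroth-order estimator is large, $\|g_{x_t}(0;\mu)\| \ge lB$; (ii) the estimator is small and the ``if condition'' on Line 8 of Subroutine \ref{sub.TSS} triggers; (iii) the estimator is small and the ``if condition'' does not trigger. Cases (i) and (ii) follow exactly as in Theorem \ref{thm.fosp} by reapplying Lemma \ref{lemma.function value decrease large gradient} and Corollary \ref{coro.decrease tss} with the sharpened parameter choices \eqref{eq.parameter setting sosp}, giving per-step function-value decreases $\Omega(\epsilon/\chi^4)$ and $\Omega(\epsilon^{3/2}/\chi^2)$, respectively. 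Our choices $\mu = \tilde{\mathcal{O}}(\epsilon^{1/4}/d^{1/4})$ in Line 4 of Algorithm \ref{alg.AZO} and $\mu = \tilde{\mathcal{O}}(\epsilon^{7/8}/\sqrt{d})$ in Line 5 of Subroutine \ref{sub.TSS} are exactly what is needed to satisfy $\mathbf{E}(\mu) \le lB/2$ and to force every $\mathbf{E}(\mu)^2$-term in Corollary \ref{coro.decrease tss} to be dominated by $\theta B^2/(\eta K)$; here the scaling $B = \Theta(\sqrt{\epsilon/\rho}/\chi^2)$ and $r = \theta B/(6K)$ replaces the cruder choices of Theorem \ref{thm.fosp}.

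The main obstacle is case (iii) when $\lambda_{\min}(\nabla^2 \hat f_{x_t}(0)) \le -\sqrt{\rho \epsilon}$. I want to show that if this inequality holds, then under the random perturbation $\xi$ the ``if condition'' triggers with probability $\ge 1 - \delta'$ for a sufficiently small per-iterate failure level $\delta'$; equivalently, the set of ``stuck'' perturbations has negligible measure in $\mathbb{B}_{x_t,r}(0)$. Following the two-point coupling of \citet{jin2018accelerated} adapted to the tangent space as in \citet{li2022restarted}, I would consider two synchronous runs of Subroutine \ref{sub.TSS} from $\xi_1$ and $\xi_2 = \xi_1 + \nu u_1$, where $u_1$ is the most negative eigenvector of $\nabla^2 \hat f_{x_t}(0)$ and $\nu = \Theta(\delta r/\sqrt{d})$. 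The difference $w^k = \tilde s_{x_t,1}^k(\xi_2) - \tilde s_{x_t,1}^k(\xi_1)$ satisfies a one-dimensional linear recursion governed by $\lambda_1 \le -\sqrt{\rho\epsilon}$, $\eta$, and $\theta$; by the choices in \eqref{eq.parameter setting sosp} the characteristic root strictly exceeds $1$, so $|w^k|$ grows like $(1+\Omega(\sqrt{\eta\theta\sqrt{\rho\epsilon}}))^k$ and reaches $\Omega(B/\chi)$ within $K = \Theta(\chi/\epsilon^{1/4})$ steps. Because the zeroth-order noise $\mE_{x_t,j}^k$ has squared norm controlled by $\mathbf{E}(\mu)^2 + \rho^2 B^4 \le \mathcal{O}(\epsilon^{7/4}/\chi^6)$ thanks to the stricter $\mu$, the perturbation from noise stays below $o(B)$ along the $u_1$-direction, so at least one of the two runs must satisfy $\sum_j \|s_{x_t}^{j+1}-s_{x_t}^j\|^2 > B^2/K$ and trigger the if-condition. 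Hence the stuck region has width at most $\nu$ along $u_1$ and volume at most $\delta'\cdot\mathrm{vol}(\mathbb{B}_{x_t,r}(0))$.

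The remaining work is accounting. If none of the three cases produces the decrease, then by Lemma \ref{lemma.small gradient} the output satisfies $\|\grad f(x_{t+1})\| \le \mathcal{O}(\epsilon)$ and, by the contrapositive of the saddle-escape argument, $\lambda_{\min}(\Hess f(x_{t+1})) \ge -\Omega(\sqrt{\epsilon})$ with high probability, so $x_{t+1}$ is an $\epsilon$-approximate second-order stationary point. Otherwise the function value decreases by at least $\Omega(\epsilon^{3/2}/\chi^2)$, so the number of outer iterations is bounded by $T = \mathcal{O}\bigl((f(x_0)-f_{\operatorname{low}})\chi^2\epsilon^{-3/2}\bigr)$. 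Taking a union bound over these $T$ iterations with per-iterate failure probability $\delta/T$ forces $\chi = \Theta(\log(d/(\delta\epsilon)))$, which is the choice in the statement. Each tangent space step uses $K = \tilde{\mathcal{O}}(\epsilon^{-1/4})$ inner updates of cost $2d$ function evaluations, giving total query complexity $T \cdot K \cdot 2d = \mathcal{O}\bigl((f(x_0)-f_{\operatorname{low}})\, d\, \epsilon^{-7/4}\, \log^6(d/(\delta\epsilon))\bigr)$; the exponent $6$ of the logarithm tracks the cumulative $\chi$-factors coming from $K$, $B^{-2}$, the iteration bound $T$, and the union-bound level.
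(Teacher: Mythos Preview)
Your overall strategy matches the paper's proof: the same three-way case split, the same reuse of Lemma \ref{lemma.function value decrease large gradient} and Corollary \ref{coro.decrease tss} for cases (i)--(ii), and the same two-point coupling (your $\xi_1,\xi_2$ argument is precisely Lemma \ref{lemma.pertubation escape}) for the saddle-escape in case (iii). However, there are two gaps that would prevent the proof from closing as written.

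First, and most importantly, your conclusion ``by the contrapositive of the saddle-escape argument, $\lambda_{\min}(\Hess f(x_{t+1})) \ge -\Omega(\sqrt{\epsilon})$'' skips a genuinely Riemannian step. The coupling argument only tells you that, with high probability, $\lambda_{\min}(\nabla^2 \hat f_{x_t}(0)) \ge -\sqrt{\rho\epsilon}$, i.e.\ a bound on the \emph{pullback} Hessian at the \emph{anchor point} $x_t$. You still need to transfer this to the Riemannian Hessian at the \emph{output} $x_{t+1}=\retr_{x_t}(y_{x_t}^*)$. The paper does this in two moves: (a) Hessian Lipschitz (Assumption \ref{assum.lipschitz hessian}) to pass from $\nabla^2\hat f_{x_t}(0)$ to $\nabla^2\hat f_{x_t}(y_{x_t}^*)$, losing $\rho(2B+r)\le\sqrt{\rho\epsilon}$; and (b) Lemma \ref{lemma.pullback gradient hessian}, which gives $\nabla^2\hat f_{x_t}(y_{x_t}^*)=T^*_{x_t,y_{x_t}^*}\Hess f(x_{t+1})T_{x_t,y_{x_t}^*}+W_{y_{x_t}^*}$, so one must bound $\|W_{y_{x_t}^*}\|\le\tau\|\grad f(x_{t+1})\|=\mathcal{O}(\epsilon)$ via Assumption \ref{assum.retraction} and the already-established gradient bound, then invoke Ostrowski's theorem and the singular-value bounds on $T_{x_t,y_{x_t}^*}$ to extract $\lambda_{\min}(\Hess f(x_{t+1}))$. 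None of this is automatic, and it is exactly where the manifold structure enters.

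Second, your bookkeeping has two slips. You omit the cost of the perturbation: since $r>0$ now, $\hat f_{x_t}(s_{x_t}^0)\neq f(x_t)$, and the paper spends a short paragraph showing $f(x_t)-\hat f_{x_t}(s_{x_t}^0)\le \theta B^2/(8\eta K)$, which is then absorbed into the decrease from Corollary \ref{coro.decrease tss}. And your claimed decrease $\Omega(\epsilon^{3/2}/\chi^2)$ in case (ii) is off: with $B\propto\chi^{-2}$ and $K\propto\chi$, the leading term $\theta B^2/(\eta K)$ scales like $\epsilon^{3/2}/\chi^5$. This is what actually produces the $\log^6$ factor (five powers from the iteration bound, one from $K$), so your final complexity is right but your derivation of it is internally inconsistent.
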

\begin{proof}
    By a similar argument, for the scenario $\|g_{x_t}(0;\mu)\| \ge lB$ at iterate $x_t$, we have
    \begin{equation}
        f(x_{t+1}) - f(x_t) \le - \min\left\{\frac{lB^2}{16}, lb^2\right\} = -\frac{l\epsilon}{1024\chi^4\rho}. \notag
    \end{equation}
    For the tangent space step at iterate $x_t$, we start with a perturbed point in $\T_{x_t}\M$, that is $s_{x_t}^0 = \xi_t \sim \Uniform(\mathbb{B}_{x_t,r}(0))$, it follows
    \begin{equation*}
       f(x_t) - \hat f_{x_t}(s_{x_t}^0) = \hat f_{x_t}(0) - \hat f_{x_t}(\xi_t) \le \inner{\nabla \hat f_{x_t}(\xi_t)}{-\xi_t} + \frac{l}{2}\|\xi_t\|^2 \le r \cdot \|\nabla \hat f_{x_t}(\xi_t)\| + \frac{lr^2}{2}.
    \end{equation*}
     Recall we choose $\mu = \mathcal{O}\left(\frac{\epsilon^{1/4}}{d^{1/4}\chi}\right)$ in Line 3 of Algorithm \ref{alg.AZO}, and thus $\mathbf{E}(\mu) \le \frac{lB}{2}$ holds. Consequently, the term $\|\nabla \hat f_{x_t}(\xi_t)\|$ can be upper bounded as
     \begin{equation*}
         \|\nabla \hat f_{x_t}(\xi_t)\| \le \|\nabla \hat f_{x_t}(\xi_t) - \nabla \hat f_{x_t}(0)\| + \|\nabla \hat f_{x_t}(0) - g_{x_t}(0;\mu)\| + \|g_{x_t}(0;\mu)\| \le l \cdot \|\xi_t\| + \mathbf{E}(\mu) + lB \le lr +\frac{3l B}{2}.
     \end{equation*}
    Substituting $r = \frac{\theta B}{6 K}  = \tilde{\mathcal{O}}\left(\epsilon\right)$ gives
    \begin{equation*}
        f(x_t) - \hat f_{x_t}(s_{x_t}^0) \le \frac{3lr^2}{2} + \frac{3lrB}{2} \le \tilde{\mathcal{O}}\left(\epsilon^2\right) + \frac{\theta B^2}{16\eta K} \le \frac{\theta B^2}{8\eta K},
    \end{equation*}
    combining with the choice of $\mu \le \mathcal{O}\left(\frac{\epsilon^{5/8}}{d^{1/4}\chi^2}\right)$ in Line 5 of Subroutine \ref{sub.TSS} leads to
    \begin{align*}
        &f(x_{t+1}) - f(x_t) \\
        = &\hat f_{x_t}(s_{x_t}^\mK) - \hat f_{x_t}(s_{x_t}^0) + \hat f_{x_t}(\xi_t) - f(x_t) \\
        \le &-\frac{\theta B^2}{4\eta K} + \eta K \mathbf{E}(\mu)^2 + \frac{K}{2B^{\frac{3}{2}}}\mathbf{E}(\mu)^{2} + \frac{K B^{\frac{7}{2}}}{2} + \frac{5\eta K \mathbf{E}(\mu)^2}{\theta} + \frac{20\eta K\rho^2 B^4}{\theta} + \frac{32\rho B^3}{3} \notag \\
        \le &-\frac{\epsilon^{\frac{3}{2}}}{96\chi^5\sqrt{\rho}} + \eta K \mathbf{E}(\mu)^2 + \frac{K}{2B^{\frac{3}{2}}}\mathbf{E}(\mu)^{2} + \frac{5\eta K \mathbf{E}(\mu)^2}{\theta} \notag \\ 
        \le &-\frac{\epsilon^{\frac{3}{2}}}{96\chi^5\sqrt{\rho}} + \mathcal{O}\left(\frac{\epsilon^{\frac{9}{4}}}{\chi^7}\right) + \mathcal{O}\left(\frac{\epsilon^{\frac{3}{2}}}{\chi^9}\right) + \mathcal{O}\left(\frac{\epsilon^{2}}{\chi^7}\right) \notag \\
        \le &-\frac{\epsilon^{\frac{3}{2}}}{192\chi^5\sqrt{\rho}}. \notag
    \end{align*}
    when the ``if condition" triggers. Therefore, in the case of the function value decrease, we have
    \begin{equation}\label{eq.sosp function value decrease}
        f(x_{t+1}) - f(x_t) \le -\min\left\{\frac{l\epsilon}{1024\chi^4\rho}, \frac{\epsilon^{\frac{3}{2}}}{192\chi^5\sqrt{\rho}}\right\} = -\frac{\epsilon^{\frac{3}{2}}}{192\chi^5\sqrt{\rho}}.
    \end{equation}
    Similarly, since the tangent space step requires at most $K = \mathcal{O}(\frac{\chi}{\epsilon^{1/4}})$ iterations and each iterate needs $2d$ function value evaluations to construct the
    zeroth-order estimator, the total number of function value evaluations does not exceed
    \begin{equation*}
        O\left(\frac{(f(x_0) - f_{\operatorname{low}})d}{\epsilon^{\frac{7}{4}}}\log^6\left(\frac{d}{\delta\epsilon}\right)\right).
    \end{equation*}
    For the scenario that the ``if condition" does not trigger in the tangent space step at iterate $x_t$, from the proof of Theorem \ref{thm.fosp}, it holds that
    \begin{equation*}
        \|\grad f(x_{t+1})\| \le \mathcal{O}(\epsilon).
    \end{equation*}
    To achieve the $\epsilon$-approximate second-order stationary points, it remains to analyze the value of $\lambda_{\min}(\Hess f(x_{t+1}))$. Suppose $\lambda_{\min}(\nabla^2 \hat f_{x_t}(0)) \ge -\sqrt{\rho\epsilon}$, then it holds that
    \begin{subequations}
        \begin{align}
            &\lambda_{\min}(\nabla^2 \hat f_{x_t}(y_{x_t}^*)) \notag \\
            \ge &\lambda_{\min}(\nabla^2 \hat f_{x_t}(0)) - |\lambda_{\min}(\nabla^2 \hat f_{x_t}(y_{x_t}^*)) - \lambda_{\min}(\nabla^2 \hat f_{x_t}(0))| \notag \\
            \ge &\lambda_{\min}(\nabla^2 \hat f_{x_t}(0)) - \|\nabla^2 \hat f_{x_t}(y_{x_t}^*) - \nabla^2 \hat f_{x_t}(0)\| \notag \\
            \ge &\lambda_{\min}(\nabla^2 \hat f_{x_t}(0)) - \rho\|y_{x_t}^* - s_{x_t}^0\| - \rho\|s_{x_t}^0\| \notag\\
            \ge &\lambda_{\min}(\nabla^2 \hat f_{x_t}(0)) - 2\rho B- \rho r \label{subeq.33a} \\
            \ge &-2\sqrt{\rho\epsilon}, \notag
        \end{align}
    \end{subequations}
    where \eqref{subeq.33a} follows from $\|y_{x_t}^* - s_{x_t}^0\| \le 2B$ and $\|s_{x_t}^0\| = \|\xi_t\| \le r$. From Lemma \ref{lemma.pullback gradient hessian}, we have
    \begin{equation*}
        \nabla^2 \hat f_{x_t}(y_{x_t}^*) = T_{x_t, y_{x_t}^*}^* \Hess f(x_{t+1}) T_{x_t, y_{x_t}^*} + W_{y_{x_t}^*},
    \end{equation*}
    and it implies that
    \begin{subequations}
        \begin{align}
            &\lambda_{\min}(\Hess f(x_{t+1})) \notag \\
            \ge &\frac{\lambda_{\min}\left(T_{x_t, y_{x_t}^*}^* \Hess f(x_{t+1}) T_{x_t, y_{x_t}^*}\right)}{\lambda_{\max}(T_{x_t, y_{x_t}^*}^* T_{x_t, y_{x_t}^*})} \label{subeq.34a} \\
            \ge &\frac{\lambda_{\min}\left(\nabla^2 \hat f_{x_t}(y_{x_t}^*) -  W_{y_{x_t}^*}\right)}{\sigma_{\max}^2} \label{subeq.34b} \\
            \ge &\frac{\lambda_{\min}\left(\nabla^2 \hat f_{x_t}(y_{x_t}^*)\right) + \lambda_{\min}\left(-W_{y_{x_t}^*}\right)}{\sigma_{\max}^2} \label{subeq.wely} \\
            \ge &-\frac{2\sqrt{\rho\epsilon}}{\sigma_{\max}^2} - \frac{25\tau}{\sigma_{\max}^2\sigma_{\min}}\epsilon \label{subeq.34c} \\
            \ge &-\frac{4\sqrt{\rho\epsilon}}{\sigma_{\max}^2} \notag
        \end{align}
    \end{subequations}
    where \eqref{subeq.34a} is due to the Ostrowski’s Theorem \cite{ostrowski1961some}, \eqref{subeq.34b} comes from Assumption \ref{assum.retraction}, \eqref{subeq.wely} uses Wely's inequality \cite{horn2012matrix}, and \eqref{subeq.34c} comes from the following inequality
    \begin{equation*}
        \|W_{y_{x_t}^*}\| = \max_{\dot{s}_{x_t} \in \T_{x_t}\M, \|\dot{s}_{x_t}\| = 1} \inner{W_{y_{x_t}^*}\dot{s}_{x_t}}{\dot{s}_{x_t}} \le \|\gamma_{x_t, \dot{s}_{x_t}}^{\prime \prime}(0)\|\|\grad f(\retr_{x_t}(y_{x_t}^*))\| \le \frac{25\tau}{\sigma_{\min}}\epsilon.
    \end{equation*}
    Consider the case $\lambda_{\min}(\nabla^2 \hat f_{x_t}(0)) < -\sqrt{\rho\epsilon}$, we define the following stuck region in the tangent space step at iterate $x_t$: 
    \begin{equation*}
        \mathcal{X}_t^{\operatorname{stuck}} = \left\{\begin{array}{l}
            \left\{s_{x_t} \in \mathbb{B}_{x_t, r}(0): \{s_{x_t}^k\}_{k=1}^K \text { satisfies } s_{x_t}^0 = s_{x_t} \text{ and } K\sum_{k=0}^{K-1} \|s_{x_t}^{k+1} - s_{x_t}^k\|\leq B^2 \right\},
            \text { if } \lambda_{\min}(\nabla^2 \hat f_{x_t}(0)) < -\sqrt{\rho\epsilon}, \\
            \emptyset, \text { otherwise. }
            \end{array}\right.
    \end{equation*}
    From Lemma \ref{lemma.pertubation escape}, we know that the probability of $s_{x_t}^0 = \xi_t \in \mathcal{X}_t^{\operatorname{stuck}}$ satisfies $\operatorname{Pr}\left\{\xi_t \in \mathcal{X}_t^{\operatorname{stuck}}\right\} \le \delta$. Therefore, once the 'if condition' does not trigger in the tangent space step, with a probability of at least $1-\delta$, $x_{t+1}$ is an $\epsilon$-approximate second-order stationary point. 
\end{proof}

\begin{lemma}\label{lemma.pertubation escape}
Suppose that Assumption \ref{assum.lipschitz gradient}, \ref{assum.lipschitz hessian} and \ref{assum.retraction} hold. Under the parameter settings in Theorem \ref{thm.sosp}, let $r_0 = \frac{\delta r}{\sqrt{d}}$. In cases where $\lambda_{\min}(\nabla^2 \hat f_{x_t}(0)) < -\sqrt{\rho\epsilon}$, given $s_{x_t}^{\prime 0}, s_{x_t}^{\prime \prime 0} \in \mathbb{B}{x_t,r}(0)$ with $s{x_t}^{\prime 0}-s_{x_t}^{\prime \prime 0}=r_0 v_1$, where $v_1$ is the minimum eigen-direction of $\nabla^2 \hat f_{x_t}(0)$, choose $\mu = \mathcal{O}\left(\frac{\epsilon^{7/8}}{\chi^3\sqrt{d}}\right) = \tilde{\mathcal{O}}\left(\frac{\epsilon^{7/8}}{\sqrt{d}}\right)$ such that $\mathbf{E}(\mu) \le \frac{\rho B \theta r_0}{2}$. By running the tangent space step starting at $s_{x_t}^{\prime 0}$ and $s_{x_t}^{\prime \prime 0}$ respectively we have
$$
\max \left\{K \sum_{k=0}^{K-1}\left\|s_{x_t}^{\prime k+1}-s_{x_t}^{\prime k}\right\|^2, K \sum_{k=0}^{K-1}\left\|s_{x_t}^{\prime \prime k+1}-s_{x_t}^{\prime \prime k}\right\|^2\right\}>B^2 .
$$
that is, at least one of the iterates triggers the "if condition".
\end{lemma}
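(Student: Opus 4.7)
\bigskip

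\noindent\textbf{Proof proposal for Lemma \ref{lemma.pertubation escape}.}
The plan is a standard coupling-sequence argument in the spirit of the escape-saddle analyses of \citet{jin2018accelerated} and \citet{criscitiello2022accelerated}, adapted to the zeroth-order setting. I will argue by contradiction: assume that \emph{both} sequences $\{s_{x_t}^{\prime k}\}$ and $\{s_{x_t}^{\prime\prime k}\}$ stay in the non-triggering regime for all $k=0,1,\dots,K-1$, so that by Lemma \ref{fact} every iterate lies in $\mathbb{B}_{x_t,2B}(0)$ and the local Lipschitz/Hessian-Lipschitz estimates of Assumptions \ref{assum.lipschitz gradient} and \ref{assum.lipschitz hessian} apply uniformly. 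The goal is then to show that the gap along the most-negative eigen-direction $v_1$ of $\mathcal{H}:=\nabla^2\hat f_{x_t}(0)$ grows exponentially fast in $k$ and eventually overshoots the budget $B^2$, producing the contradiction.

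\medskip

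\noindent First I would introduce the difference sequences $w^k = s_{x_t}^{\prime k}-s_{x_t}^{\prime\prime k}$ and $\tilde w^k = y_{x_t}^{\prime k}-y_{x_t}^{\prime\prime k}$. Subtracting the two momentum updates gives the linear recursion
\[
\tilde w^k = w^k+(1-\theta)(w^k-w^{k-1}),\qquad
w^{k+1} = \tilde w^k-\eta\bigl(g_{x_t}(y_{x_t}^{\prime k};\mu)-g_{x_t}(y_{x_t}^{\prime\prime k};\mu)\bigr).
\]
Writing $g_{x_t}(y;\mu) = \nabla\hat f_{x_t}(y)+\varepsilon(y)$ with $\|\varepsilon(y)\|\le\mathbf{E}(\mu)$ (Lemma \ref{lemma.zero order estimator error}), and using Hessian Lipschitzness to write $\nabla\hat f_{x_t}(y_{x_t}^{\prime k})-\nabla\hat f_{x_t}(y_{x_t}^{\prime\prime k}) = \mathcal{H}\tilde w^k + \Delta^k$ where $\|\Delta^k\|\le\rho\cdot 2B\,\|\tilde w^k\|$ (from $\|y_{x_t}^{\cdot k}\|\le 2B$), the recursion becomes
\[
w^{k+1} = (I-\eta\mathcal{H})\tilde w^k - \eta\Delta^k - \eta\bigl(\varepsilon^{\prime k}-\varepsilon^{\prime\prime k}\bigr).
\]

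\medskip

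\noindent Next I would decompose along the eigenbasis of $\mathcal{H}$ and focus on the component $\alpha^k := \langle w^k, v_1\rangle$. In that single direction the noiseless recursion is
\[
\alpha^{k+1} = (1-\eta\lambda_1)\bigl(\alpha^k+(1-\theta)(\alpha^k-\alpha^{k-1})\bigr) + (\text{perturbation}^k),
\]
a two-step linear recurrence whose characteristic roots satisfy $|\lambda_+|\ge 1+\Omega(\eta\sqrt{\rho\epsilon})$ because $\lambda_1<-\sqrt{\rho\epsilon}$ and $\eta=1/(4l)$, $\theta=\rho^{7/4}\epsilon^{1/4}/l$ are tuned exactly so that the momentum operator amplifies the unstable mode. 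Thus, absent perturbation, $|\alpha^k|$ grows like $(1+c\sqrt{\eta^2\rho\epsilon})^k$ from the initial condition $\alpha^0=r_0$, $\alpha^{-1}=r_0$, and in $K=\Theta(\chi\rho^{5/4}\epsilon^{-1/4})$ steps it is inflated by a factor at least $2^{\Omega(\chi)}$, which (thanks to $\chi=\Theta(\log(d/(\delta\epsilon)))$) is large enough to force $|\alpha^K|\gg B$.

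\medskip

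\noindent The main technical obstacle, and the reason for the choice $\mu=\tilde{\mathcal{O}}(\epsilon^{7/8}/\sqrt d)$, is to verify that the accumulated perturbations $\eta\Delta^k$ and $\eta(\varepsilon^{\prime k}-\varepsilon^{\prime\prime k})$ do not overpower the exponential growth. To control them I would run the standard ``inductive coupling'' bound: assuming by induction that $\|w^j\|\le 2|\alpha^j|$ for all $j\le k$, the Hessian-Lipschitz remainder contributes at most $2\eta\rho B\cdot 2|\alpha^j|$ per step, which is absorbable into the growth rate because $\eta\rho B\le \theta\cdot\text{const}$. The zeroth-order error contributes at most $2\eta\,\mathbf{E}(\mu)$ per step, and summing a geometric series this gives a total deviation bounded by $\mathcal{O}(\eta\mathbf{E}(\mu)/\theta)\le r_0/2$, using exactly $\mathbf{E}(\mu)\le \rho B\theta r_0/2$ as hypothesized. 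Closing the induction yields $|\alpha^K|\ge 2^{\Omega(\chi)}r_0$, and since $r_0=\delta r/\sqrt d$ is chosen precisely so that $2^{\Omega(\chi)}r_0>4B$, one of the two sequences must satisfy $\|s_{x_t}^{\cdot K}-s_{x_t}^{\cdot 0}\|>B$, which by Lemma \ref{fact} is incompatible with never triggering the ``if condition''. This contradiction proves the lemma, and the subsequent volume-covering argument (standard after this point) converts it into the probability bound $\Pr\{\xi_t\in\mathcal{X}_t^{\operatorname{stuck}}\}\le\delta$ used in Theorem \ref{thm.sosp}.
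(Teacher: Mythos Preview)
Your proposal is essentially the same coupling-sequence contradiction argument the paper uses, following \citet{jin2018accelerated} and \citet{li2022restarted}: assume neither sequence triggers, write the difference $w^k=s_{x_t}^{\prime k}-s_{x_t}^{\prime\prime k}$ as a perturbed two-step linear recursion driven by $I-\eta\nabla^2\hat f_{x_t}(0)$, show by induction that the Hessian-Lipschitz and zeroth-order error terms are dominated (by a factor $\tfrac12$) by the noiseless growth along $v_1$, and conclude $\|w^K\|>4B$, contradicting Lemma~\ref{fact}.

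The only notable difference is presentational. The paper stacks the recursion into the $2d\times 2d$ matrix $A_{x_t}$ and runs the induction on the inequality $\bigl\|\eta[I~0]\sum_{r\le k-1}A_{x_t}^{k-1-r}[\phi_{x_t}^r;0]\bigr\|\le\tfrac12\bigl\|[I~0]A_{x_t}^k[w^0;w^0]\bigr\|$, which lets it invoke Lemmas~31, 33, 38 of \citet{jin2018accelerated} verbatim; in particular the zeroth-order error $2\mathbf{E}(\mu)$ is absorbed into the Hessian-Lipschitz term via the lower bound $|a^r-b^r|\ge\theta/2$, turning the constant $24\rho B$ into $26\rho B$. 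Your scalar projection $\alpha^k=\langle w^k,v_1\rangle$ is equivalent because $w^0=r_0v_1$ makes the noiseless evolution stay in the $v_1$-subspace, but your induction hypothesis $\|w^j\|\le 2|\alpha^j|$ still needs those same matrix-norm lemmas (or analogues) to control the perturbation accumulated in the orthogonal directions; the growth-rate comparison $\eta\rho B\lesssim\theta$ you cite is exactly what the paper packages as $26\eta\rho B K(2/\theta+K)\le\tfrac12$. So the route is the same, and once you cite those lemmas your sketch is complete.
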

The proof of this lemma follows from Lemma 18 in \cite{jin2018accelerated} and Lemma B.2 in \cite{li2022restarted}, and thus we list the sketch. The details can be found in \cite{li2022restarted} and \cite{jin2018accelerated}
\begin{proof}
    For any point $s_{x_t} \in \T_{x_t}\M$, we introduce the notation $e_{x_t}(s_{x_t};\mu): = \nabla \hat f_{x_t}(s_{x_t}) - g_{x_t}(s_{x_t};\mu)$, and thus, the update in tangent space step at iterate $x_t$ can be rewritten as
    \begin{align*}
        &y_{x_t}^k = s_{x_t}^k + (1-\theta)(s_{x_t}^k - s_{x_t}^{k-1}) \\
        &s_{x_t}^{k+1} = y_{x_t}^k - \eta \nabla \hat f_{x_t}(y_{x_t}^k) + \eta e_{x_t}(y_{x_t}^k;\mu).
    \end{align*}
    Denoting $\mathbf{w}_{x_t}^k := s_{x_t}^{\prime k} - s_{x_t}^{\prime \prime k}$, from the above update, we obtain
    $$
    \begin{aligned}
    \begin{bmatrix}
    \mathbf{w}_{x_t}^{k+1} \\
    \mathbf{w}_{x_t}^{k}
    \end{bmatrix}= & \begin{bmatrix}
    (2-\theta)(I-\eta \nabla^2 \hat f_{x_t}(0)) & -(1-\theta)(I-\eta \nabla^2 \hat f_{x_t}(0)) \\
    I & 0
    \end{bmatrix} \begin{bmatrix}
    \mathbf{w}_{x_t}^{k} \\
    \mathbf{w}_{x_t}^{k-1}
    \end{bmatrix} \\
    & -\eta\begin{bmatrix}
    (2-\theta) \Delta_{x_t}^k \mathbf{w}_{x_t}^{k} -(1-\theta) \Delta_{x_t}^k \mathbf{w}_{x_t}^{k-1} + e_{x_t}(y_{x_t}^{\prime \prime k};\mu) - e_{x_t}(y_{x_t}^{\prime k};\mu) \\
    0
    \end{bmatrix},
    \end{aligned}
    $$
    where $\Delta_{x_t}^k = \int_0^1\left(\nabla^2 \hat f\left(\tau y_{x_t}^{\prime k} + (1-\tau) y_{x_t}^{\prime \prime k}\right)-\nabla^2 \hat f_{x_t}(0)\right) \mathrm{d} \tau$. For simplicity, let
    \begin{equation*}
        A_{x_t} = \begin{bmatrix}
    (2-\theta)(I-\eta \nabla^2 \hat f_{x_t}(0)) & -(1-\theta)(I-\eta \nabla^2 \hat f_{x_t}(0)) \\
    I & 0
    \end{bmatrix}
    \end{equation*}
    and $\phi_{x_t}^k = (2-\theta) \Delta_{x_t}^k \mathbf{w}_{x_t}^{k} -(1-\theta) \Delta_{x_t}^k \mathbf{w}_{x_t}^{k-1} + e_{x_t}(y_{x_t}^{\prime \prime k};\mu) - e_{x_t}(y_{x_t}^{\prime k};\mu)$, we further have
    \begin{equation}\label{eq.w_k update}
       \begin{bmatrix}
    \mathbf{w}_{x_t}^{k+1} \\
    \mathbf{w}_{x_t}^{k}
    \end{bmatrix} = A_{x_t} \begin{bmatrix}
    \mathbf{w}_{x_t}^{k} \\
    \mathbf{w}_{x_t}^{k-1}
    \end{bmatrix} - \eta \begin{bmatrix}
        \phi_{x_t}^k \\ 0
    \end{bmatrix} = A_{x_t}^{k+1} \begin{bmatrix}
    \mathbf{w}_{x_t}^{0} \\
    \mathbf{w}_{x_t}^{0}
    \end{bmatrix} - \eta \sum_{r=0}^k A_{x_t}^{k-r} \begin{bmatrix}
        \phi_{x_t}^r \\ 0
    \end{bmatrix},
    \end{equation}
    
    To proceed, we prove this lemma by contradiction. Assume that none of the iterates $s_{x_t}^{\prime 0}, s_{x_t}^{\prime 1}, \ldots, s_{x_t}^{\prime K}$ and $s_{x_t}^{\prime\prime 0}, s_{x_t}^{\prime\prime 1}, \ldots, s_{x_t}^{\prime\prime K}$ trigger the “if condition”, which implies that
    \begin{align*}
        &\|s_{x_t}^{\prime k} - s_{x_t}^{\prime 0}\| \le B, ~\|y_{x_t}^{\prime k} - s_{x_t}^{\prime 0}\| \le 2B, ~ k = 1,\ldots,K, \\
        &\|s_{x_t}^{\prime\prime k} - s_{x_t}^{\prime\prime 0}\| \le B, ~\|y_{x_t}^{\prime\prime k} - s_{x_t}^{\prime\prime 0}\| \le 2B, ~ k = 1,\ldots,K.
    \end{align*}
    Combining with the fact that $\|s_{x_t}^{\prime 0}\| \le r$, $\|s_{x_t}^{\prime \prime 0}\| \le r$ and $r \le B$, we have
    \begin{equation*}
        \|\Delta_{x_t}^k\| \le \rho \max\left\{\|y_{x_t}^{\prime k}\|, \|y_{x_t}^{\prime \prime k}\|\right\} \le \rho \max\left\{\|y_{x_t}^{\prime k} - s_{x_t}^{\prime 0}\| + \|s_{x_t}^{\prime 0}\|, \|y_{x_t}^{\prime \prime k} - s_{x_t}^{\prime\prime 0}\| + \|s_{x_t}^{\prime\prime 0}\|\right\} \le 3\rho B.
    \end{equation*}
    Consequently, the term $\|\phi_{x_t}^k\|$ can be upper bounded as
    \begin{equation*}
        \|\phi_{x_t}^k\| \le 2\|\Delta_{x_t}^k\|\|\mathbf{w}_{x_t}^k\| + \|\Delta_{x_t}^k\|\|\mathbf{w}_{x_t}^{k-1}\| + 2\mathbf{E}(\mu) \le 6\rho B(\|\mathbf{w}_{x_t}^k\| + \|\mathbf{w}_{x_t}^{k-1}\|) + 2\mathbf{E}(\mu).
    \end{equation*}
    From the update \eqref{eq.w_k update}, we see
    \begin{equation*}
        \mathbf{w}_{x_t}^k = [I \quad 0]A_{x_t}^k \begin{bmatrix}
    \mathbf{w}_{x_t}^{0} \\
    \mathbf{w}_{x_t}^{0}
    \end{bmatrix} - \eta [I \quad 0] \sum_{r=0}^{k-1} A_{x_t}^{k-1-r} \begin{bmatrix}
        \phi_{x_t}^r \\ 0
    \end{bmatrix}.
    \end{equation*}
    Next, we set up an induction on $k$ to show:
    \begin{equation*}
        \left\|\eta [I \quad 0] \sum_{r=0}^{k-1} A_{x_t}^{k-1-r} \begin{bmatrix}
        \phi_{x_t}^r \\ 0
    \end{bmatrix}\right\| \le \frac{1}{2}\left\|[I \quad 0]A_{x_t}^k \begin{bmatrix}
    \mathbf{w}_{x_t}^{0} \\
    \mathbf{w}_{x_t}^{0}
    \end{bmatrix}\right\|.
    \end{equation*}
    It is easy to check the base case holds for $k = 0$ since $\mathbf{E}(\mu) = \frac{\rho B \theta r_0}{2} \le 2\eta \rho B r_0$. Then, assume that for all iterations less than or equal to $k$, the induction assumption holds. We have
    \begin{equation*}
        \left\|\mathbf{w}_{x_t}^k\right\| = \left\|[I \quad 0]A_{x_t}^k \begin{bmatrix}
    \mathbf{w}_{x_t}^{0} \\
    \mathbf{w}_{x_t}^{0}
    \end{bmatrix} - \eta [I \quad 0] \sum_{r=0}^{k-1} A_{x_t}^{k-1-r} \begin{bmatrix}
        \phi_{x_t}^r \\ 0
    \end{bmatrix}\right\| \le 2\left\|[I \quad 0]A_{x_t}^k \begin{bmatrix}
    \mathbf{w}_{x_t}^{0} \\
    \mathbf{w}_{x_t}^{0}
    \end{bmatrix}\right\|,
    \end{equation*}
    and it further implies that
    \begin{equation*}
        \left\|\phi_{x_t}^k\right\| \le 12\rho B\left(\left\|[I \quad 0]A_{x_t}^k \begin{bmatrix}
    \mathbf{w}_{x_t}^{0} \\
    \mathbf{w}_{x_t}^{0}
    \end{bmatrix}\right\| + \left\|[I \quad 0]A_{x_t}^{k-1} \begin{bmatrix}
    \mathbf{w}_{x_t}^{0} \\
    \mathbf{w}_{x_t}^{0}
    \end{bmatrix}\right\|\right) + 2\mathbf{E}(\mu) \le 24\rho B\left\|[I \quad 0]A_{x_t}^k \begin{bmatrix}
    \mathbf{w}_{x_t}^{0} \\
    \mathbf{w}_{x_t}^{0}
    \end{bmatrix}\right\| + 2\mathbf{E}(\mu),
    \end{equation*}
    where the last inequality is due to the monotonicity of $\left\|[I \quad 0]A_{x_t}^k \begin{bmatrix}
    \mathbf{w}_{x_t}^{0} \\
    \mathbf{w}_{x_t}^{0}
    \end{bmatrix}\right\|$ in $k$ (Lemma 33 in \cite{jin2018accelerated}). For the case $k+1$, we have
    \begin{subequations}
        \begin{align}
            &\left\|\eta [I \quad 0] \sum_{r=0}^{k} A_{x_t}^{k-r} \begin{bmatrix}
        \phi_{x_t}^r \\ 0
    \end{bmatrix}\right\| \notag \\
    \le &\eta \sum_{r=0}^{k} \left\|[I \quad 0] A_{x_t}^{k-r} \begin{bmatrix}
        I \\ 0
    \end{bmatrix}\right\|\left\|\phi_{x_t}^r\right\| \notag \\
    \le &\eta \sum_{r=0}^{k} \left\|[I \quad 0] A_{x_t}^{k-r} \begin{bmatrix}
        I \\ 0
    \end{bmatrix}\right\|\left(24\rho B\left\|[I \quad 0]A_{x_t}^r \begin{bmatrix}
    \mathbf{w}_{x_t}^{0} \\
    \mathbf{w}_{x_t}^{0}
    \end{bmatrix}\right\| + 2\mathbf{E}(\mu)\right) \notag \\
    = &\eta \sum_{r=0}^k |a_{x_t}^{k-r}|\left(24\rho B|a_{x_t}^r - b_{x_t}^r|r_0 + 2\mathbf{E}(\mu)\right) \label{subeq.40a} \\
    \le &26\eta\rho B\sum_{r=0}^k |a_{x_t}^{k-r}||a_{x_t}^r - b_{x_t}^r|r_0 \label{subeq.40b} \\
    \le &26\eta\rho B\sum_{r=0}^k \left(\frac{2}{\theta} + k + 1\right)|a_{x_t}^{k+1} - b_{x_t}^{k+1}|r_0 \label{subeq.40c} \\
    \le &26\eta\rho B K\left(\frac{2}{\theta} + K\right)\left\|[I \quad 0]A_{x_t}^{k+1} \begin{bmatrix}
    \mathbf{w}_{x_t}^{0} \\
    \mathbf{w}_{x_t}^{0}
    \end{bmatrix}\right\| \notag \\
    \le &\frac{1}{2}\left\|[I \quad 0]A_{x_t}^{k+1} \begin{bmatrix}
    \mathbf{w}_{x_t}^{0} \\
    \mathbf{w}_{x_t}^{0}
    \end{bmatrix}\right\|, \label{subeq.40d}
    \end{align}
    \end{subequations}

    where we define $[a_{x_t}^k \quad -b_{x_t}^k] = [1 \quad 0] A_{x_t, \min}^k$ and 
    $$A_{x_t, \min} = \begin{bmatrix}
    (2-\theta)(1-\eta \lambda_{\min}(\nabla^2 \hat f_{x_t}(0))) & -(1-\theta)(I-\eta \lambda_{\min}(\nabla^2 \hat f_{x_t}(0))) \\
    I & 0
    \end{bmatrix}.$$ 
    Then, we apply the same argument in the proof of Lemma B.2 in \cite{li2022restarted} to the inequality \eqref{subeq.40a}, \eqref{subeq.40b} comes from $|a_{x_t}^r - b_{x_t}^r| \ge \frac{\theta}{2}$ (Lemma 38 in \cite{jin2018accelerated}) and $\mathbf{E}(\mu) \le \frac{\rho B \theta r_0}{2}$, and \eqref{subeq.40c} uses Lemma 31 in \cite{jin2018accelerated}. From the parameter settings, we have $26\eta\rho B K\left(\frac{2}{\theta} + K\right) \le \frac{1}{2}$ in \eqref{subeq.40d}. Therefore, the introduction is established, which yields
    \begin{equation*}
        \|\mathbf{w}_{x_t}^K\| \ge \frac{1}{2}\left\|[I \quad 0]A_{x_t}^{K} \begin{bmatrix}
    \mathbf{w}_{x_t}^{0} \\
    \mathbf{w}_{x_t}^{0}
    \end{bmatrix}\right\| = \frac{r_0}{2}|a_{x_t}^K - b_{x_t}^K| \ge \frac{\theta r_0}{4}\left(1+\frac{\theta}{2}\right)^K \ge 5B,
    \end{equation*}
    where we use Lemma 33 in \cite{jin2018accelerated}, $\eta \lambda_{\min}(\nabla^2 \hat f_{x_t}(0))) \le -\theta^2$ and $K \ge \frac{2}{\theta} \log \frac{20 B}{\theta r_0}$. However, for the term $\|\mathbf{w}_{x_t}^K\|$, it also holds that
    \begin{equation*}
        \|\mathbf{w}_{x_t}^K\| \le \|s_{x_t}^{\prime K} - s_{x_t}^{\prime 0}\| + \|s_{x_t}^{\prime 0} - s_{x_t}^{\prime\prime 0}\| + \|s_{x_t}^{\prime \prime K} - s_{x_t}^{\prime \prime 0}\| \le 4B,
    \end{equation*}
    which leads to a contradiction. Therefore, at least one of the iterates $s_{x_t}^{\prime 0}, s_{x_t}^{\prime 1}, \ldots, s_{x_t}^{\prime K}$ and $s_{x_t}^{\prime\prime 0}, s_{x_t}^{\prime\prime 1}, \ldots, s_{x_t}^{\prime\prime K}$ trigger the “if condition”.
    
\end{proof}
\section{Proofs of Non-asymptotic Convergence Analysis}
In this section, we prove that non-perturbed RAZGD with Option II converges to second-order stationary points asymptotically. It follows from that the tangent space step TSS locally avoids saddle points. To prove the local saddle avoidance, it is helpful to use the augmentation method to extend the update rule in the tangent space to a dynamical system of $s^{k+1}$ and $w^{k+1}$ that only depend on $s^k$ and $w^k$, i.e., regard $y^k$ as an intermediate variable. Despite we are interested in the zeroth-order method, the stability analysis of the zeroth-order algorithm heavily depends on the structure of its first-order counterpart. Therefore, we start with the analysis of the first-order tangent space step, which provides the second-order convergence immediately. 
\subsection{First-order tangent space step}
We use the augmentation method to re-write the tangent space step in the following way,
\begin{align}
    y^k&=s^k+(1-\theta)(s^k-w^k)
\\
s^{k+1}&=y^k-\eta g(y^k;\mu)
\\
w^{k+1}&=s^k
\end{align}
where $g(y;\mu)$ is the zeroth order approximation of the gradient $\nabla f(y)$ with smoothing parameter $\mu$. We will not emphasize that $g$ is performed at the point $x$ at this stage, just to reduce the complexity of notations. The three steps of the updating rule can be denoted by three mappings that consist of the mapping of the algorithm that updates $s^k,w^k$ to $s^{k+1},w^{k+1}$. We denote
\begin{align*}
    F_1(s,w)&=s+(1-\theta)(s-w)
    \\
    F_2(y)&=y-\eta g(y;\mu)
    \\
    F_3(s)&=s,
\end{align*}
and then the algorithm can be written compactly as
\[
\psi(s,w)=\left(F_2\circ F_1(s,w),F_3(s)\right)
\]
which is a mapping from $T_xM\times T_xM$ onto itself. The fixed point of the first order accelerated $(s^*,w^*)$ is necessarily a point such that $s^*=w^*$ and the gradient
\[
\nabla f(y^*)=\nabla f(s^*+(1-\theta)(s^*-w^*))=0.
\]
We will investigate the local structure of the zeroth order variant at the point $(s^*,w^*)$. The differential $D\psi(s^*,w^*)$ equals to
\begin{align}
D\psi(s^*,w^*)&=\left[
\begin{array}{c}
DF_2\circ DF_1(s^*,w^*)
\\
DF_3(s^*)
\end{array}
\right]
\end{align}
As an immediate result and important argument bridging first-order and zeroth-order accelerated gradient descent in the tangent space, we first prove that the first-order tangent space step avoids saddle points.
The following classic result of the stable manifold theorem will be used to complete the proof for the first-order method.
\begin{theorem}[\cite{shub}]\label{SMT}
    Let $p$ be a fixed point for the $C^r$ local diffeomorphism $h:U\rightarrow\mathbb{R}^n$ where $U\subset\mathbb{R}^n$ is an open neighborhood of $p$ in $\mathbb{R}^n$ and $r\ge 1$. Let $E^s\oplus E^c\oplus E^u$ be the invariant splitting of $\mathbb{R}^n$ into generalized eigenspaces of $Dh(p)$ corresponding to eigenvalues of absolute value less than one, equal to one, and greater than one. To the $Dh(p)$ invariant subspace $E^s\oplus E^c$ there is an associated local $h$ invariant embedded disc $W^{loc}_{sc}$ which is the graph of a $C^r$ function $r:E^s\oplus E^c\rightarrow E^u$, and ball $B$ around $p$ such that:
    $h(W^{loc}_{sc})\cap B\subset W^{loc}_{sc}$. If $h^n(x)\in B$ for all $n\ge 0$, then $x\in W^{loc}_{sc}$.   
\end{theorem}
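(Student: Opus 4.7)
The plan is to prove the Stable Manifold Theorem via the classical Hadamard--Perron graph transform method. I would begin by working in a small neighborhood of $p$ and passing to local coordinates adapted to the invariant splitting $E^s \oplus E^c \oplus E^u$. Writing $h(p+z) = p + Az + g(z)$ with $A = Dh(p)$ and $g(z) = O(\|z\|^2)$, I would decompose $z = (z_{sc}, z_u) \in (E^s \oplus E^c) \times E^u$ and split $A$ and $g$ into the corresponding block components. A standard cutoff argument would then replace $h$ by a globally defined map that agrees with $h$ inside a small ball and smoothly extends outside, so that the nonlinear remainder $g$ is globally Lipschitz with an arbitrarily small Lipschitz constant.

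Next, I would seek the local invariant disc $W^{loc}_{sc}$ as the graph $\{(z_{sc}, r(z_{sc})) : z_{sc} \in E^s \oplus E^c\}$ of a $C^r$ function $r$ with $r(0)=0$ and $Dr(0)=0$. Invariance of the graph under $h$ becomes a functional equation for $r$, which I would recast as a fixed point problem for a graph transform operator $\Gamma$ on the Banach space of bounded Lipschitz maps $E^s \oplus E^c \to E^u$ with small Lipschitz constant. The key analytic step is to show that $\Gamma$ is a contraction in the sup norm: the strict expansion of $A$ on $E^u$ (eigenvalues of modulus $>1$) yields a contraction after inversion of the $E^u$-block, and the smallness of the Lipschitz constant of $g$ from the cutoff controls the cross terms. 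Invoking Banach's fixed point theorem then produces a unique Lipschitz fixed point $r_*$, whose graph is the desired locally invariant disc.

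The trapping characterization, namely that any $x$ with $h^n(x)\in B$ for all $n\ge 0$ lies on $W^{loc}_{sc}$, I would establish by a backward-in-time argument. Writing the $E^u$-component of an orbit via a discrete variation-of-constants formula and using the hyperbolic expansion of $A|_{E^u}$, any orbit that does not lie on the graph of $r_*$ must have its $E^u$-coordinate grow geometrically and eventually leave $B$; hence orbits that stay forever in $B$ are forced onto $W^{loc}_{sc}$. The $C^r$ regularity of $r_*$ up to the order of $h$ I would handle by a separate bootstrap: differentiating the fixed-point equation formally and showing that the induced operators on spaces of $C^k$ maps with suitably weighted norms are fibre contractions, giving regularity one derivative at a time.

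The main obstacle is the handling of the center direction $E^c$. For pure stable or unstable manifolds the contraction estimates are clean and both uniqueness and smoothness follow readily, but with neutral eigenvalues the center-stable manifold need not be unique in the large and its $C^r$-smoothness depends delicately on the choice of cutoff and on spectral gaps between $E^c$ and $E^u$. For the statement as formulated only local existence, local $h$-invariance, and the trapping property are required, so the graph transform together with the expansion on $E^u$ suffices; the more subtle regularity and uniqueness questions can be bypassed, matching the conclusion as cited from Shub.
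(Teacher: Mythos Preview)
The paper does not prove this theorem at all: it is stated as a classical result cited from Shub and is used as a black box in the subsequent lemma. There is therefore no ``paper's own proof'' to compare against.

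That said, your sketch is a reasonable outline of the standard Hadamard--Perron graph transform argument, which is indeed essentially the route taken in Shub's book. Your identification of the center direction as the subtle point is accurate: the existence, local invariance, and trapping property follow from the strict expansion on $E^u$ together with the smallness of the nonlinear remainder after cutoff, while full uniqueness of the center--stable manifold is not claimed and need not be established. One point to be careful about if you were to flesh this out: the $C^r$ regularity for center--stable manifolds generally requires a spectral gap condition and the fibre contraction theorem rather than a direct bootstrap, so the phrase ``one derivative at a time'' hides nontrivial work; but since the statement as quoted only asserts $C^r$ smoothness without further qualification, and since the paper treats the theorem as a citation, this level of detail is beyond what is needed here.
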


\begin{lemma}
    Suppose that $0$ is a strict saddle point of the pullback function in the tangent space, then the measure of the local initial points that converge to $0$ is zero.
\end{lemma}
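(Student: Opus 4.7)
The plan is to apply the Stable Manifold Theorem (Theorem~\ref{SMT}) to the discrete dynamical system $\psi$ acting on $\T_x\M \times \T_x\M$ at the fixed point $(0,0)$. First, I will verify that $\psi$ is a $C^1$ local diffeomorphism near $(0,0)$ and that $D\psi(0,0)$ admits at least one eigenvalue of modulus strictly greater than one. These two facts will produce a local center--stable invariant manifold $W^{loc}_{sc}$ that is the graph of a $C^r$ function $E^s \oplus E^c \to E^u$ with $E^u \neq \{0\}$, so that $\dim W^{loc}_{sc} < 2d$ and $W^{loc}_{sc}$ has Lebesgue measure zero in $\T_x\M \times \T_x\M \cong \mathbb{R}^{2d}$. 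Theorem~\ref{SMT} also guarantees that every initial $(s^0, w^0)$ whose forward $\psi$-orbit remains in a small neighborhood of $(0,0)$ lies in $W^{loc}_{sc}$; in particular, the basin of attraction of $(0,0)$ is contained in $W^{loc}_{sc}$ and therefore has measure zero, which is exactly the claim.

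\textbf{Spectral computation.} For the first-order tangent space step, $g(y;\mu) = \nabla \hat f_x(y)$, so $(0,0)$ is indeed a fixed point of $\psi$ because $\nabla \hat f_x(0) = 0$ at the saddle. Writing $H = \nabla^2 \hat f_x(0)$, the chain rule gives
\[
D\psi(0,0) \;=\; \begin{bmatrix} (2-\theta)(I - \eta H) & -(1-\theta)(I - \eta H) \\ I & 0 \end{bmatrix}.
\]
Diagonalizing $H = Q \Lambda Q^{\top}$ with eigenvalues $\lambda_1 \leq \cdots \leq \lambda_d$, applying the simultaneous change of basis $\mathrm{diag}(Q^{\top}, Q^{\top})$, and then interleaving coordinates block-diagonalizes $D\psi(0,0)$ into $d$ blocks
\[
M_j \;=\; \begin{bmatrix} (2-\theta)\alpha_j & -(1-\theta)\alpha_j \\ 1 & 0 \end{bmatrix}, \qquad \alpha_j := 1 - \eta \lambda_j.
\]
Because $0$ is a strict saddle, some $\lambda_j$ is negative; let $\lambda_*$ denote the most negative one. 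The characteristic polynomial of $M_*$ is $p(\mu) = \mu^2 - (2-\theta)\alpha_* \mu + (1-\theta)\alpha_*$, and evaluating at $\mu = 1$ gives the clean identity
\[
p(1) \;=\; 1 - (2-\theta)\alpha_* + (1-\theta)\alpha_* \;=\; 1 - \alpha_* \;=\; \eta \lambda_* \;<\; 0.
\]
Since $p(\mu) \to +\infty$ as $\mu \to +\infty$, the polynomial must have a real root strictly greater than $1$, producing the required unstable eigenvalue for $D\psi(0,0)$. The quantitative condition relating $\eta$, $\theta$, and $\lambda_*$ in the hypotheses of Theorem~\ref{thm.asymptotic} will further sharpen this so that the unstable eigenvalue is uniformly bounded away from $1$, a bound that will be needed later when transferring the argument to the zeroth-order setting.

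\textbf{Diffeomorphism and conclusion.} To invoke Theorem~\ref{SMT} I still need $\psi$ to be a $C^1$ local diffeomorphism at $(0,0)$. Permuting the block rows of $D\psi(0,0)$ yields $\det D\psi(0,0) = \pm (1-\theta)^d \det(I - \eta H)$, which is nonzero because $\eta \leq 1/(4l)$ together with Assumption~\ref{assum.lipschitz gradient} forces $\|\eta H\| \leq 1/4 < 1$. Combined with the $C^1$ smoothness of $\nabla \hat f_x$ supplied by Assumption~\ref{assum.lipschitz hessian}, the inverse function theorem produces a $C^1$ local inverse of $\psi$ on an open neighborhood of $(0,0)$. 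Theorem~\ref{SMT} then delivers $W^{loc}_{sc}$ of dimension at most $2d-1$, which is a $C^r$-embedded submanifold and hence has Lebesgue measure zero, yet contains every locally convergent orbit; this completes the argument. I expect the only subtle point to be establishing an unstable eigenvalue for every strict saddle, and this is handled cleanly by the one-line identity $p(1) = \eta \lambda_*$.
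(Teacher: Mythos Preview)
Your proof is correct and follows the same overall strategy as the paper: compute $D\psi(0,0)$, block-diagonalize via the eigendecomposition of $H=\nabla^2\hat f_x(0)$, exhibit an eigenvalue of modulus larger than~$1$, and then invoke the Stable Manifold Theorem~\ref{SMT}.

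The one meaningful difference lies in how the unstable eigenvalue is produced. The paper writes out the larger root of each $2\times 2$ block explicitly and argues it exceeds~$1$ under the parameter restriction $\eta>\frac{\theta}{(2-\theta)(-\lambda_i)}$, which forces $(2-\theta)(1-\eta\lambda_i)>2$. You instead observe the one-line identity $p(1)=\eta\lambda_*<0$ and combine it with $p(\mu)\to+\infty$ to get a real root strictly greater than~$1$ with no restriction on $\eta>0$ or $\theta\in(0,1)$. Your route is more elementary and more general; the paper's route, on the other hand, yields an explicit lower bound for the unstable eigenvalue, which is what is later reused in the quantitative zeroth-order analysis. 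Your verification that $\psi$ is a local $C^1$ diffeomorphism via $\det D\psi(0,0)=\pm(1-\theta)^d\det(I-\eta H)\neq 0$ (using $\|\eta H\|\le 1/4$) is also more explicit than the paper, which only remarks that $\eta$ can be taken small enough for this to hold.
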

\begin{proof}
    The structure of $\psi$ gives the expression of its differential. Since 
\begin{align}
D F_2\circ D F_1&=(I-\eta\nabla^2f(y^*))((2-\theta)I,-(1-\theta)I)
\\
&=\left((2-\theta)(I-\eta\nabla^2f(y^*)),-(1-\theta)(I-\eta\nabla^2f(y^*))\right)
\end{align}
and 
\[
DF_3=\left(I,0\right),
\]
we have that
\[
D\psi(s^*,w^*)=\left[
\begin{array}{cc}
(2-\theta)(I-\eta\nabla^2f(y^*))&-(1-\theta)(I-\eta\nabla^2f(y^*))
\\
I&0
\end{array}
\right].
\]
Note that $D\psi$ is similar to 
\[
\left[
\begin{array}{cc}
(2-\theta)(I-\eta H)&-(1-\theta)(I-\eta H)
\\
I&0
\end{array}
\right]
\]
provided $\nabla^2f(y^*)$ is diagonalizable where $H$ is the diagonal matrix consisting of eigenvalues of $\nabla^2f(y^*)$. We can abuse the notation by
\begin{align}
\det(D\psi-\lambda I)&=\det\left(\left[
\begin{array}{cc}
(2-\theta)(I-\eta H)-\lambda I&-(1-\theta)(I-\eta H)
\\
I&-\lambda I
\end{array}
\right]\right)
\\
&=\det\left(((2-\theta)(I-\eta H)-\lambda I)+(1-\theta)(I-\eta H)\left(-\frac{1}{\lambda}I\right)\right)(-\lambda)^n
\\
&=\det\left(-\lambda((2-\theta)(I-\eta H)-\lambda I)+(1-\theta)(I-\eta H)\right)
\\
&=\det\left(\lambda^2I-\lambda(2-\theta)(I-\eta H)+(1-\theta)(I-\eta H)\right)
\end{align}
Since all matrices involved above are all diagonal, the determinant is nothing but the product of the following polynomials for $i\in[n]$:
\[
\lambda^2-(2-\theta)(1-\eta \lambda_i)\lambda+(1-\theta)(1-\eta\lambda_i).
\]
Suppose $\lambda_i$ is a negative eigenvalue (existence is guaranteed by assuming $y^*$ is a saddle point), the eigenvalue of $D\psi$ must contain the following one
\[
\lambda=\frac{(2-\theta)(1-\eta\lambda_i)+\sqrt{(2-\theta)^2(1-\eta\lambda_i)^2-4(1-\theta)(1-\eta\lambda_i)}}{2}.
\]
Since we can choose $\theta$ and $\eta$ so that
\[
\eta>\frac{\frac{2}{2-\theta}-1}{-\lambda_i}
\]
which guarantees that 
\[
(2-\theta)(1-\eta\lambda_i)>2,
\]
thus $\lambda>1$ (unstable fixed point). The step $\eta$ can be arbitrarily small (so that $\psi$ is a diffeomorphism) by taking $\theta$ as small as possible. Applying the center-stable manifold theorem \ref{SMT}, we complete the proof.
    
\end{proof}

\subsection{Zeroth-order tangent space step with constant contraction}
In this subsection, we show the asymptotic convergence for the zeroth-order tangent space step with constant contracting parameter $\beta$. 
The zeroth order tangent space step can be extended with the smoothing parameter to a new mapping $\tilde{\psi}(s,w,\mu)$ with a contraction factor $\beta$ as follows,
\begin{equation}\label{psi}
\tilde{\psi}(s,w,\mu)=\left(\psi(s,w,\mu),\beta\mu\right)
\end{equation}
where $\psi(s,w,\mu)$ considers the smoothing parameter as a proper variable such that $\psi$ is a mapping defined on $T_xM\times T_xM\times\mathbb{R}\rightarrow T_xM\times T_xM$. Note that the zeroth order approximation $g(y;\mu)$ may not provide a fixed point of the gradient descent, in order to asymptotically output a fixed point the gradient descent, it is necessary to contract the smoothing parameter so that the zeroth order approximation algorithm has the same set of fixed points as the gradient descent. Motivated by the zeroth order approximation scheme of \cite{GP2019}. The tangent space mapping requires a contracting smoothing parameter $\beta\mu$ for the whole tangent space step $\TSSA$. Another observation on the tangent space step $\TSSA$ from the asymptotic perspective, is the condition in the \textbf{while} loop. Since the asymptotic convergence empirically works well and is more convenient in the parameter settings, there is no need to use finite step $K$ in $\TSS$ mapping, but the condition $k\sum_{j=0}^k\norm{s_k^{j+1}-s_x^{j}}^2>B^2$ suffices to control the process of the \textbf{while} loop. The next lemma shows that the $\TSSA$ step is almost impossible to converge to a saddle point.

\begin{lemma}
    Consider mapping $\tilde{\psi}$ is defined as \eqref{psi}. The set of initial condition in the tangent space that converges to saddle point, i.e., $0$ in this setting, has measure zero.
\end{lemma}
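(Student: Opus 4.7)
The plan is to view $\tilde\psi$ as an autonomous $C^r$ dynamical system on the augmented state space $T_xM\times T_xM\times\mathbb{R}$ and import the saddle-instability analysis from the first-order case via the Stable Manifold Theorem (Theorem \ref{SMT}). The key observation is that the decaying smoothing parameter becomes a purely stable direction in the augmented system, while the $(s,w)$-directions inherit the same spectral picture as the first-order iteration analyzed in the preceding lemma.

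First I would identify the fixed points of $\tilde\psi$. Because the third coordinate evolves by $\mu\mapsto\beta\mu$ with $\beta\in(0,1)$, every fixed point must satisfy $\mu^*=0$; at $\mu=0$ the central-difference estimator reduces to $g(y;0)=\nabla f(y)$ (extended continuously), so the $(s,w)$-component of any fixed point of $\tilde\psi$ is a fixed point of the first-order map $\psi$. In particular, the saddle $0$ of $f$ lifts to the single nearby fixed point $(0,0,0)$.

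Second I would linearize at $(0,0,0)$. Taylor-expanding the central difference yields $g(y;\mu)=\nabla f(y)+O(\mu^2)$, because all odd-in-$\mu$ terms cancel by symmetry. Thus $g$ extends $C^r$-smoothly across $\mu=0$ with $\partial_\mu g(y;0)=0$, and combined with the triangular form of $\tilde\psi$ this produces the block-diagonal differential
\[
D\tilde\psi(0,0,0)=\begin{pmatrix}D\psi(0,0)&0\\ 0&\beta\end{pmatrix},
\]
whose spectrum is the spectrum of the first-order differential $D\psi(0,0)$ together with the eigenvalue $\beta<1$. Under the parameter ranges of Theorem \ref{thm.asymptotic}, the preceding lemma already furnishes an eigenvalue of $D\psi(0,0)$ strictly greater than one at every strict saddle, so $D\tilde\psi(0,0,0)$ admits a nontrivial unstable subspace $E^u$ and the generalized eigenspace $E^s\oplus E^c$ has codimension at least one.

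Finally I would apply Theorem \ref{SMT} to obtain a local $C^r$ center-stable manifold $W^{loc}_{sc}$ of codimension $\geq 1$ through $(0,0,0)$; any orbit attracted to the saddle eventually enters and remains in the ball $B$ of the theorem, hence ultimately lies in $W^{loc}_{sc}$. Pulling back, the global basin of the saddle equals the countable union $\bigcup_{k\geq 0}\tilde\psi^{-k}(W^{loc}_{sc})$. Local invertibility of $\tilde\psi$---the $(s,w)$-block is nonsingular whenever $\eta$ avoids the reciprocals of the Hessian eigenvalues, which the parameter choices guarantee, while $\beta>0$---makes each preimage a $C^r$ submanifold of the same codimension, hence Lebesgue-null, and a countable union of null sets is null. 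The delicate point is justifying the $C^r$ extension of the central difference across $\mu=0$: it needs $f$ to be $C^{r+2}$, a mild strengthening of Assumption \ref{assum.lipschitz hessian} that is standard in asymptotic stable-manifold analyses, and once granted the argument reduces cleanly to the first-order skeleton.
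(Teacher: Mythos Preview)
Your proposal is correct and follows essentially the same approach as the paper: augment the state with $\mu$, observe the block-triangular (in fact block-diagonal, by the even-in-$\mu$ symmetry of the central difference) structure of $D\tilde\psi$ at the lifted saddle $(0,0,0)$, inherit the unstable eigenvalue from the first-order analysis, and invoke the Stable Manifold Theorem. Your write-up is a bit more careful than the paper's---you flag the $C^r$ regularity needed across $\mu=0$ and spell out the countable-union passage from the local center-stable manifold to the global basin---but the architecture is identical.
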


\begin{proof}
    The differential of $\tilde{\psi}$ can be computed in the following way,
    \[
    D\tilde{\psi}=\left[
    \begin{array}{ccc}
       D_s\psi  & D_w\psi & D_{\mu}\psi 
       \\
       0  & 0 & \beta  
    \end{array}
    \right].
    \]
Recall that in the zeroth order approximation, $\psi$ is a mapping consisting of the approximated gradient $g(y;\mu)$, which is different from the first order method. The differential of $g(y;\mu)$ gives the differential of $\tilde{\psi}$ and $\psi$, so we compute $Dg(y;\mu)$ concretely, since $0$ is the only fixed point for the $\mu$ component, we need to compute the Taylor expansion at $(y^*,0)$ where $y^*$ is the fixed point of the first order counterpart of the algorithm. Thus, we have $D_{s,w}\psi(y^*,0)$ coincide with the differential computed in the first order method, and $D_{\mu}\psi$ is $(-\eta D_{\mu}g(y;\mu),0)^{\top}$, where
\begin{align}
    D_{\mu}g(y;\mu)=\left[
    \begin{array}{c}
    \frac{\partial g_1(y;\mu)}{\partial\mu}
    \\
    \vdots
    \\
    \frac{\partial g_d(y;\mu)}{\partial\mu}
    \end{array}
    \right]=\left[
    \begin{array}{c}
         \frac{\partial}{\partial\mu}\left(\frac{f(y+\mu e_1)-f(y)}{\mu}\right)
         \\
         \vdots
         \\
         \frac{\partial}{\partial\mu}\left(\frac{f(y+\mu e_d)-f(y)}{\mu}\right)
    \end{array}
    \right].
\end{align}
Since the block matrix $[D_s\psi,D_w\psi]$ computed at $(y^*,0)$ is the same as that has been computed in the first order method, and we have shown that the determinant of the block matrix is not zero, therefore, we are ready to obtain the determinant of $D\tilde{\psi}$ at the fixed point $(y^*,0)$. It is obvious that 
\[
\det\left(D\tilde{\psi}(s^*,w^*,0)\right)=\det\left(D\psi(s^*,w^*,0)\right)\cdot\beta
\]
and 
\[
\det\left(D\tilde{\psi}(s^*,w^*,0)-\lambda I\right)=\det\left(D\psi(s^*,w^*,0)-\lambda I\right)(\beta-\lambda).
\]
Based on the spectral analysis of the first order tangent step, we conclude that the escaping direction the zeroth order approximation tangent space step is provided by the unstable direction of the first order method. In the end, applying the stable manifold theorem \cite{shub}, we conclude that the set of initial points that converge to saddle point in the asymptotic variant of tangent space step is of measure zero since these initial points belong to a lower dimensional manifold.
\end{proof}

\subsection{Zeroth-order tangent space step with time-varying contraction}
We next prove the asymptotic saddle avoidance of the tangent space step $\TSSA$ when the smoothing parameter reduces in a slower rate, which is more practical from a zeroth-order perspective.
\begin{lemma}\label{slower}
Suppose $\TSSA$ is executed with the update rule on the smoothing parameter $\mu$ given by 
\[
\mu_{k+1}=\left(1-\frac{1}{k+2}\right)\mu_k
\],
then the probability of $\TSSA$ converging to a saddle point is zero.
\end{lemma}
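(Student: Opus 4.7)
The plan is to reduce the non-autonomous iteration with time-varying contraction to an autonomous dynamical system on an enlarged state space, and then invoke the center-stable manifold theorem (Theorem \ref{SMT}) just as in the constant-$\beta$ case. Introduce the auxiliary variable $\tau_k := 1/(k+2)$, which obeys the autonomous recursion $\tau_{k+1} = \tau_k/(1+\tau_k)$, so that the prescribed update rewrites as $\mu_{k+1} = (1-\tau_k)\mu_k$. Combining with the tangent-space iteration yields an autonomous map
\[
\Phi(s, w, \mu, \tau) = \bigl(\psi(s, w, \mu),\ (1-\tau)\mu,\ \tau/(1+\tau)\bigr)
\]
on $T_xM \times T_xM \times \mathbb{R} \times \mathbb{R}$ whose orbit starting at $(s_0, w_0, \mu_0, 1/2)$ exactly reproduces \TSSA.

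Any fixed point of $\Phi$ must satisfy $\tau = \tau/(1+\tau)$ and hence $\tau = 0$; the $\mu$-coordinate is then inert, and the $(s,w)$-equations reduce to the first-order fixed-point conditions $s=w$ and $\nabla \hat{f}_x(s)=0$. At such a point $(s^*, w^*, 0, 0)$ I would compute the block-triangular differential
\[
D\Phi(s^*, w^*, 0, 0) = \begin{pmatrix} D_{(s,w)}\psi(s^*, w^*, 0) & D_\mu \psi(s^*, w^*, 0) & 0 \\ 0 & 1 & 0 \\ 0 & 0 & 1 \end{pmatrix}.
\]
The lower diagonal block is the identity by direct differentiation of $(\mu, \tau) \mapsto ((1-\tau)\mu,\ \tau/(1+\tau))$ at the origin, and $D_\mu\psi$ vanishes at $\mu=0$ because the central-difference estimator of Definition \ref{def.ZO estimators} admits the even-order Taylor expansion $g(y;\mu) = \nabla \hat{f}_x(y) + O(\mu^2)$. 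Consequently $\operatorname{spec}(D\Phi) = \operatorname{spec}(D\psi(s^*, w^*, 0)) \cup \{1, 1\}$, with the two extra eigenvalues lying in the center subspace $E^c$; at a strict Riemannian saddle, the first-order spectral computation already supplied an eigenvalue of $D\psi(s^*, w^*, 0)$ of magnitude strictly exceeding $1$, so $\dim E^u \geq 1$ for $\Phi$ as well.

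Applying Theorem \ref{SMT} to $\Phi$, the local center-stable manifold $W^{loc}_{sc}$ is a $C^r$ embedded disc in the $(2d+2)$-dimensional augmented space, of codimension at least one. Because $E^u$ is contained entirely in the $(s,w)$-block (the $(\mu, \tau)$-directions contribute only to $E^c$), the membership condition $(s_0, w_0, \mu_0, 1/2) \in W^{loc}_{sc}$ amounts to a $\dim E^u$-dimensional constraint that lives purely in the $(s_0, w_0)$-coordinates. Hence the set of initial conditions of \TSSA\ that converge to the saddle is a $C^r$ submanifold of $T_xM \times T_xM$ of codimension at least $1$, so it carries Lebesgue---and therefore Riemannian---measure zero. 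Summing over the at-most-countable collection of strict Riemannian saddles closes the proof.

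The main obstacle will be justifying rigorously that $\Phi$ is a $C^r$ local diffeomorphism at the degenerate point $(s^*, w^*, 0, 0)$: smoothness of $\psi$ across $\mu = 0$ relies on the even parity of the central-difference remainder (so the apparent $1/\mu$ in the estimator is removable), while invertibility follows from non-singularity of the first-order Jacobian---already exploited in the constant-$\beta$ case---together with $\tau \mapsto \tau/(1+\tau)$ being a diffeomorphism near $\tau = 0$. The transversality claim, namely that the unstable eigenvectors project non-trivially onto the $(s, w)$-factor so that the codimension survives restriction to the slice $\{\mu = \mu_0,\ \tau = 1/2\}$, reduces to the observation that $D\Phi$ is block-diagonal at the fixed point, which forces the $(\mu, \tau)$-components of any vector in $E^u$ to vanish.
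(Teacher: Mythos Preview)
Your approach is correct in spirit but genuinely different from the paper's. The paper does \emph{not} autonomize: it keeps the time-dependent family $\tilde\psi_k(s,w,\mu)=(\psi(s,w,\mu),(1-\tfrac{1}{k+2})\mu)$ and carries out a Lyapunov--Perron construction directly on the non-autonomous iteration, writing $z_{k+1}=A_k z_k+\xi_k(z_k)$, splitting into contracting and expanding blocks (the time-varying eigenvalue $1-\tfrac{1}{k+2}$ is absorbed into the contracting block since $\prod_k(1-\tfrac{1}{k+2})=\tfrac{1}{3(k+2)}\to 0$), and proving that the induced operator on the space of null sequences is a strict contraction; the fixed point of that operator yields the lower-dimensional stable set. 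Your autonomization via $\tau_k=1/(k+2)$ is the textbook device for converting such a non-autonomous recursion into an autonomous one, and buys a direct appeal to Theorem~\ref{SMT} at the cost of two extra center eigenvalues. The observation that the central-difference estimator is even in $\mu$, so $D_\mu\psi(s^*,w^*,0)=0$ and $D\Phi$ is block-diagonal rather than merely block-triangular, is a simplification the paper does not exploit. Both routes ultimately rest on the same spectral computation of $D_{(s,w)}\psi$ at the saddle.

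One step needs more care than you give it. The local disc $W^{loc}_{sc}$ lives in a ball $B$ around $(s^*,w^*,0,0)$, but your actual initial condition has $\tau_0=1/2$ and $\mu_0$ of order one, so it is not in $B$ and the membership ``$(s_0,w_0,\mu_0,1/2)\in W^{loc}_{sc}$'' is not literally available. The standard repair: if the $\Phi$-orbit converges to the fixed point it eventually enters and remains in $B$, so some iterate $z_N\in W^{loc}_{sc}$; then pull back by $\Phi^{-N}$ and take a countable union over $N$. Your transversality remark---that $E^u$ projects trivially onto the $(\mu,\tau)$-factor by block-diagonality---is exactly what is needed to check that each slice $W^{loc}_{sc}\cap\{\mu=\mu_N,\ \tau=\tau_N\}$ still has codimension $\dim E^u\ge 1$ inside the $2d$-dimensional $(s,w)$-slice, so the pullbacks (which $\Phi$ carries slice to slice diffeomorphically, since the $(\mu,\tau)$-dynamics is decoupled) remain measure zero in $(s_0,w_0)$. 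With that gap filled, your argument goes through.
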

\begin{proof}
The dynamical system augmented by $\mu$ is the following,
\begin{align}
y^k&=s^k+(1-\theta)(s^k-w^k)
\\
s^{k+1}&=y^k-\eta g(y^k;\mu_k)
\\
w^{k+1}&=s^k
\\
\mu_{k+1}&=(1-\frac{1}{k+2})\mu_k
\end{align}
which is an augmentation with the smoothing parameter $\mu$
Following the previous arguments, we can write the mapping on the parameters $(s,w,\mu)$ as follows,
\[
\tilde{\psi}_k(s,w,\mu)=\left(\psi(s,w,\mu),\left(1-\frac{1}{k+2}\right)\mu\right)
\]
and the differential of $\tilde{\psi}$ is
\[
D\tilde{\psi}_k=\left[
\begin{array}{ccc}
D_s\psi&D_w\psi&D_{\mu}\psi
\\
0&0&1-\frac{1}{k+2}
\end{array}
\right]
\]
Since the zeroth order method with contraction factor converges to stationary points of the corresponding first-order method, we can investigate the same Taylor expansion and differential of the algorithm expanded at the stationary point, especially at saddle point. The eigenvalues of the operator $D\tilde{\psi}(s^*,w^*,0)$ can be analyzed by the characteristic polynomial

\[
\det\left(D\tilde{\psi}_k(s^*,w^*,0)-\lambda I\right)=\det\left(D\psi(s^*,w^*,0)-\lambda I\right)\left(1-\frac{1}{k+2}-\lambda\right).
\]
Note that except for the eigenvalue $1-\frac{1}{k+2}$, all the other eigenvalues are the same as the case whose the contraction parameter is a constant $\beta$. Therefore, there is an $(2d+1)\times(2d+1)$ invertible matrix $C_k$ for each $k$ such that
\[
A_k=C_k^{-1}D\tilde{\psi}_k(s^*,w^*,0)C_k=\left[
\begin{array}{cc}
P_k&
\\
&Q_k
\end{array}
\right]
\]
where the eigenvalues $\lambda_1,...,\lambda_s$ of $P_k$ have magnitude less than 1, and the eigenvalues $\lambda_{s+1},...,\lambda_{2d+1}$ of the matrix $Q_k$ have magnitude greater than 1 (guaranteed by the property of a saddle point). Since the algorithm now is time dependent, i.e., the update rule $\tilde{\psi}_k$ contains a time dependent term $1-\frac{1}{k+2}$ and thus the Jordan block is also time dependent, the time independent argument that directly follows the stable manifold theorem is not valid in this time dependent setting. To show the same result as what holds for constant contraction case, we need to investigate the structure of the dynamical system in detail. Denote $A(m,n)$ the successive product of the $n$th till the $m$th matrices, i.e., $A(m,n)=A_m\cdot...\cdot A_n$. With the help of this notation, we can express the product 
\[
A(m,n)=\left[
\begin{array}{cc}
P_m...P_n&
\\
&Q_m...Q_n
\end{array}
\right]=\left[
\begin{array}{cc}
P(m,n)&
\\
&Q(m,n)
\end{array}
\right].
\]
Recall that the dynamical system induced by the tangent space step is
\[
\left[
\begin{array}{c}
s^{k+1}
\\
w^{k+1}
\\
\mu_{k+1}
\end{array}
\right]=\tilde{\psi}(s^k,w^k,\mu_k).
\]
Assuming that the saddle point is $(s^*,w^*,0)=(0,0,0)$, and the above dynamical system has the following expression obtained from the Taylor expansion around $(0,0,0)$,
\[
\left[
\begin{array}{c}
s^{k+1}
\\
w^{k+1}
\\
\mu_{k+1}
\end{array}
\right]=D\tilde{\psi}_k(0,0,0)\left[
\begin{array}{c}
s^k\\w^k\\ \mu_k
\end{array}
\right]+\xi_k(s^k,w^k,\mu_k)
\]
where $\xi_k(\cdot,\cdot,\cdot)$ is the remainder of $\tilde{\psi}_k$. Starting from the initial condition $(s^0,w^0,\mu_0)$, the dynamical system can be represented by 
\[
z_{k+1}=\left[
\begin{array}{cc}
P(k,0)&
\\
&Q(k,0)
\end{array}
\right]
z_0
+\sum_{i=0}^k\left[
\begin{array}{cc}
P(k,i+1)&
\\
&Q(k,i+1)
\end{array}
\right]\xi_i(z_i),
\]
where $z_k$ is the dynamical system topologically conjugated to $(s^k,w^k,\mu_k)$. Splitting $z_k$ and $\xi_i(z_i)$ into contracting and expanding components according to $P(k,0)$ and $Q(k,0)$, i.e., this decomposition is actually based on the magnitudes of the eigenvalues of $D\tilde{\psi}_k$ which is determined by the Hessian of the objective function $f$ at saddle points. Further information of the Jordan matrix $P_k$ and $Q_k$ can be inferred. The expanding matrix $Q_k$ contains only constant eigenvalues with magnitude greater than 1. The contracting matrix $P_k$ contains constant eigenvalues and one eigenvalue that is exactly $1-\frac{1}{k+2}$. The stable-unstable decomposition of $z_k$ can be further refined into stable with constant eigenvalues less than 1, stable with eigenvalue $1-\frac{1}{k+2}$, and unstable with constant eigenvalues greater than 1. Specifically, we decompose $z_k$ into
\[
z_k=\left[
\begin{array}{c}
z_k^+\\ z_k^{\mu} \\ z_k^-
\end{array}
\right],
\]
and $\xi_i(z_i)$ into
\[
\xi_i(z_i)=\left[
\begin{array}{c}
\xi_i^+(z_i) \\ 0\\ \xi_i^-(z_i)
\end{array}
\right]
\]
where the remainder with respect to $z_k^{\mu}$ is zero because the update rule of $\mu$ is a linear function. Based on this decomposition, we can refine the formulation of the dynamical system of $z_k$ in the following way,
\begin{align*}
z_{k+1}^+&=P(k,0)z_0^++\sum_{i=0}^kP(k,i+1)\xi_i^+(z_i)
\\
z_{k+1}^{\mu}&=\left(1-\frac{1}{k+2}\right)z_{k}^{\mu}
\\
z_{k+1}^-&=Q(k,0)z_0^-+\sum_{i=0}^kQ(k,i+1)\xi_i^-(z_i)
\end{align*}
where we still use $P$ as the Jordan block of stable component without distinguishing from the one containing $1-\frac{1}{k+2}$. Letting $k\rightarrow\infty$, we have formally the unstable component $z_0^-$ of the initial condition $z_0$ satisfying
\[
z_0^-=-\sum_{i=1}^{\infty}Q(i-1,0)^{-1}\xi_{i-1}^-(z_{i-1}),
\] 
and then the updated term $z_{k+1}$ can be written as
\begin{align*}
z_{k+1}&=z_{k+1}^+\oplus z_{k+1}^{\mu}\oplus z_{k+1}^-
\\
&=\left(P(k,0)z_0^++\sum_{i=0}^kP(k,i+1)\xi_i^+(z_i)\right)\oplus \left(1-\frac{1}{k+2}\right)z_k^{\mu}\oplus\left(Q(k,0)z_0^-+\sum_{i=0}^kQ(k,i+1)\xi_i^-(z_i)\right)
\end{align*}
where the last summand can be further written as
\[
-\sum_{i=0}^{\infty}Q(k+1+i,k+1)^{-1}\xi_{k+1+i}^-(z_{k+1+i}).
\]
The update rule can be understood as an operator acting on the space of bounded sequences converging to zero. Since $P(k,0)$ and $Q(k,0)$ are matrices only involving constant eigenvalues, there exists constants $K_1, K_2<1$ such that
\begin{align}
\norm{P(m,n)}_2&\le K_1^{m-n+1}
\\
\norm{Q(m,n)^{-1}}_2&\le K_2^{m-n+1}.
\end{align}
The Lyapunov-Perron argument \cite{PPW19} asserts that there exists a small neighborhood around the saddle point, such that $T$ is an contraction map on the space of sequences converging to zero, and consequently, the initial point that can be carried to the saddle point (the zero) by the algorithm must lie on a lower dimensional manifold. To make this point precise, we investigate the norm of the difference of two sequences $T$ acting on. Let $u=\{u_n\}_{n\in\mathbb{N}}$ and $v=\{v_n\}_{n\in \mathbb{N}}$,

\begin{align}
(Tu-Tv)_{k+1}&=(Tu)_{k+1}-(Tv)_{k+1}
\\
&=\left(Q(k,0)(u_0^+-v_0^+)+\sum_{i=0}^kP(t,i+1)(\xi_i^+(u_i)-\xi_i^+(v_i))\right)
\\
&\oplus \frac{1}{3(k+2)}\left(u_0^{\mu}-v_0^{\mu}\right)
\\
&\oplus\left(-\sum_{i=0}^{\infty}Q(k+1+i,k+1)^{-1}(\xi_{k+1+i}^{-}(u_{k+1+i})-\xi_{k+1+i}^-(v_{k+1+i}))\right)
\end{align}
where the coefficient of the middle component comes from the product
\[
\frac{1}{3(k+2)}=\prod_{i=0}^k\left(1-\frac{1}{i+2}\right).
\]
Let $d(u,v)$ be the metric defined by the supremum norm of the sequence $\{u_i-v_i\}_{i\in\mathbb{N}}$. Since it has been proven by \cite{feng2022accelerated} that $T$ is a contracting map without the component of $\frac{1}{3(k+2)}(u_0^{\mu}-v_0^{\mu})$, i.e., there exists a constant $K<1$ such that
\[
d(Tu,Tv)\le Kd(u,v),
\]
and $\frac{1}{3(k+2)}\le\frac{1}{6}<1$, it guarantees a new constant $K'<1$, so that $T$ acting on the space of the considered sequence with $\mu$-component is an contracting map. Thus, the existence and uniqueness of the stable manifold in a neighborhood of the saddle point follow from the existence and uniqueness of the fixed point of $T$. So the probability of the initial condition lying on such lower dimensional manifold so that the iterates converge to saddle point is zeor.
\end{proof}

Now we are able to finalize the proof of Theorem \ref{thm.asymptotic}.
\begin{proof}[proof of Theorem \ref{thm.asymptotic}] It has been established that the probability of $\TSSA$ staying in a neighborhood of a saddle point is zero, for any $\TSSA$ stage. 
\[
\text{Pr}\left\{\lim_{k\rightarrow\infty}k\sum_{j=0}^{k-1}\norm{s_x^{j+1}-s_x^j}^2\le B^2\right\}=0
\]
and then the probability for the iterations to stay in a neighborhood of a second-order stationary point is 1. Since the zeroth-order acceleration with contracting parameter $\beta<1$ converges to stationary point, it follows that the probability for $\TSSA$ output a second-order stationary point is 1. Together with the above Lemma \ref{slower} for the case when the contracting parameter decreases in a slower manner (which slows the decreasing of smoothing parameter $\mu$ in the $\TSSA$ stage), we complete the proof of the theorem.

\end{proof}
\section{Implementation of RZGD and PZGD}\label{sec.RZOGD}
For Riemannian zeroth-order gradient descent (RZGD), it iteratively utilizes the Riemannian zeroth-order gradient descent step (Subroutine \ref{sub.RZGD}) until convergence. In the case of Euclidean projected zeroth-order gradient descent (PZGD), we first compute the Euclidean zeroth-order estimator (denoted by $g_{\operatorname{E}}(\cdot)$), take a Euclidean zeroth-order gradient descent step, and then project onto the Riemannian manifold. The pseudocodes of both algorithms are presented below.

\floatname{algorithm}{Algorithm}
\setcounter{algorithm}{1}
\begin{multicols}{2}
\begin{algorithm}[H]
   \caption{\textbf{R}iemannian \textbf{Z}eroth-order \textbf{G}radient \textbf{D}escent Algorithm (\textbf{RZGD})} \label{alg.RZGDA}
\begin{onehalfspace} %
\begin{algorithmic}[1]
  \STATE \textbf{input:} parameters $\eta$, and $B$
      \STATE\textbf{initialize:} $x_0 \in \M$, $t = 0$
      \FOR{$t = 0, 1, \cdots, \infty$}
        \STATE Compute estimator $g_{x_t}(0;\mu)$
        \IF{$\|g_{x_t}(0; \mu)\| \ge lB$}
            \STATE $x_{t+1} = \textbf{RZGDS}(x_t,\eta,g_{x_t}(0;\mu))$
        \ELSE
            \STATE Terminate with $x_t$
        \ENDIF
      \ENDFOR
\end{algorithmic}
\end{onehalfspace}
\end{algorithm}

\begin{algorithm}[H]
   \caption{Euclidean \textbf{P}rojected \textbf{Z}eroth-order \textbf{G}radient \textbf{D}escent Algorithm (\textbf{PZGD})}
\begin{onehalfspace} %
\begin{algorithmic}[1]
  \STATE \textbf{input:} parameters $\eta_t$
      \STATE\textbf{initialize:} $x_0 \in \M$, $t = 0$
      \FOR{$t = 0, 1, \cdots, \infty$}
        \STATE Compute Euclidean estimator $g_{\operatorname{E}}(x_t)$
        \STATE $x_{t+1} = \operatorname{proj}_{\M}\left(x_t - \eta_t g_{\operatorname{E}}(x_t)\right)$
      \ENDFOR
\end{algorithmic}
\end{onehalfspace}
\end{algorithm}
\end{multicols}

For completeness, we establish the function query complexity of RZGD, which serves as a benchmark for demonstrating the acceleration achieved by our RAZGD.
\begin{theorem}
    Suppose that Assumptions \ref{assum.lipschitz gradient}, \ref{assum.lipschitz hessian} and \ref{assum.retraction} hold. Set parameters in Algorithm \ref{alg.RZGDA} as follows
    \begin{equation}
        \eta = \frac{1}{4l}, \ B=\frac{\epsilon}{2l}. \notag
    \end{equation}
     For any $x_0 \in \M$ and sufficiently small $\epsilon > 0$, choose $\mu = \mathcal{O}\left(\frac{\sqrt{\epsilon}}{d^{1/4}}\right)$ in Line 4 of Algorithm \ref{alg.RZGDA}. Then Algorithm \ref{alg.RZGDA} outputs an $\epsilon$-approximate first-order stationary point. The total number of function value evaluations is no more than
    \begin{equation}
        O\left(\frac{(f(x_0) - f_{\operatorname{low}})d}{\epsilon^{2}}\right). \notag
    \end{equation}  
\end{theorem}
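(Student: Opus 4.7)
The plan is to re-use the same dichotomy that drives the non-accelerated branch of the proof of Theorem~\ref{thm.fosp}: at every iterate $x_t$ the algorithm either performs a Riemannian zeroth-order gradient descent step (when $\|g_{x_t}(0;\mu)\|\ge lB$) and hence enjoys a controllable function-value decrease, or it terminates with $x_t$ (when $\|g_{x_t}(0;\mu)\|<lB$), in which case we certify that $x_t$ is already an $\epsilon$-approximate RFOSP. The only preparatory step is to verify the smoothing-parameter precondition $\mathbf{E}(\mu)\le lB/2$ needed by Lemma~\ref{lemma.function value decrease large gradient}. Since $B=\epsilon/(2l)$, we need $\mathbf{E}(\mu)\le \epsilon/4$; Lemma~\ref{lemma.zero order estimator error} together with $\mu=\mathcal{O}(\sqrt{\epsilon}/d^{1/4})$ gives $\mathbf{E}(\mu)\le \rho\mu^2\sqrt{d}/6 = \mathcal{O}(\epsilon)$, which with a small enough absolute constant in the definition of $\mu$ falls below $\epsilon/4$.

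In the large-estimator case, Lemma~\ref{lemma.function value decrease large gradient} directly yields $f(x_{t+1})-f(x_t)\le -\min\{lB^2/16,\,lb^2\}=-\Omega(\epsilon^2/l)$ for sufficiently small $\epsilon$ (so that the first term dominates). In the small-estimator case, the property $T_{x_t,0}=\mathrm{Id}$ from Definition~\ref{def.retr}, combined with Lemma~\ref{lemma.pullback gradient hessian}, gives $\grad f(x_t)=\nabla\hat f_{x_t}(0)$, so
\[
\|\grad f(x_t)\|\le \|g_{x_t}(0;\mu)\|+\|g_{x_t}(0;\mu)-\nabla\hat f_{x_t}(0)\|\le lB+\mathbf{E}(\mu)\le \epsilon/2+\epsilon/4\le \epsilon,
\]
which certifies $x_t$ as an $\epsilon$-approximate Riemannian first-order stationary point.

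Combining the two cases, the total number of large-estimator iterations is bounded by $(f(x_0)-f_{\operatorname{low}})/\Omega(\epsilon^2/l)=\mathcal{O}((f(x_0)-f_{\operatorname{low}})/\epsilon^2)$; each iteration computes $g_{x_t}(0;\mu)$ via exactly $2d$ function-value queries according to Definition~\ref{def.ZO estimators}, which produces the claimed $\mathcal{O}((f(x_0)-f_{\operatorname{low}})d/\epsilon^2)$ complexity. There is no genuine obstacle in this proof — the argument is mechanical once the parameter check is done; the only care required is to make sure that the chosen $\mu$ is simultaneously small enough to make $\mathbf{E}(\mu)\le lB/2$ work inside Lemma~\ref{lemma.function value decrease large gradient} and to make the additive error in the termination step bounded by $\epsilon/2$, both of which are satisfied by the quadratic estimate $\rho\mu^2\sqrt{d}/6$ in Lemma~\ref{lemma.zero order estimator error}.
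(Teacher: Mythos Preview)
Your proposal is correct and follows essentially the same approach as the paper's own proof: verify $\mathbf{E}(\mu)\le lB/2$ via Lemma~\ref{lemma.zero order estimator error}, invoke Lemma~\ref{lemma.function value decrease large gradient} for the large-estimator step, use $T_{x_t,0}=\mathrm{Id}$ to certify the stationary point in the small-estimator case, and count the $2d$ queries per iteration. The paper's argument is identical up to notation (it obtains $\|\grad f(x_t)\|\le 3\epsilon/4$, which you also implicitly derive).
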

\begin{proof}
    Recall the approximation error of the Riemannian coordinate-wise zeroth-order estimator (Lemma \ref{lemma.zero order estimator error}), it holds that
    \begin{equation*}
        \|g_{x_t}(0;\mu) - \nabla \hat f_{x_t}(0)\| \le \frac{\epsilon}{4} = \frac{lB}{2}
    \end{equation*}
    by setting $\mu = \mathcal{O}\left(\frac{\sqrt{\epsilon}}{d^{1/4}}\right)$. For the scenario where $\|g_{x_t}(0; \mu)\| \ge lB$ holds, Lemma \ref{lemma.function value decrease large gradient} gives
    \begin{equation*}
         f(x_{t+1}) - f(x_t) \le - \min\left\{\frac{lB^2}{16}, lb^2\right\} = -\frac{\epsilon^2}{64l}.
    \end{equation*}
    Otherwise, we have
    \begin{equation*}
        \|\grad f(x_t)\| = \|\nabla \hat f_{x_t}(0)\| \le \|g_{x_t}(0;\mu) - \nabla \hat f_{x_t}(0)\| + \|g_{x_t}(0;\mu)\| \le \frac{3}{4}\epsilon,
    \end{equation*}
    where the first equality holds as $T_{x_t, 0}$ is identity. Therefore, as computing the zeroth-order estimator once requires $2d$ function value evaluations, the total number of function value evaluations must be less than
    \begin{equation*}
        O\left(\frac{(f(x_0) - f_{\operatorname{low}})d}{\epsilon^2}\right).
    \end{equation*}
\end{proof}
\section{Riemannian Geometry of the Simplex}\label{GometryofSimplex}
The Riemannian geometry of the positive orthant $\mathbb{R}_+^d=\{x:x_i>0 \text{ for all }i\in[d]\}$ was studied by researchers from mathematical biology and evolutionary game theory \cite{Shahshahani,MertiRiemannGame}. For completeness, this section provides missing details of calculation based on the Riemannian geometry of positive orthant and simplex in the experiment. 
Formally the positive orthant is $\mathbb{R}_+^d$ is endowed with a Riemannian metric whose metric matrix $\{g_{ij}(x)\}$ is diagonal with $g_{ii}(x)=\frac{\abs{x}}{x_i}$ where $\abs{x}=\sum_{j=1}^dx_j$, i.e.,
\[
g(x)=\left[
\begin{array}{ccc}
\frac{\abs{x}}{x_1}&&0
\\
&\ddots&
\\
0&&\frac{\abs{x}}{x_d}
\end{array}
\right]
\]
$\mathbb{R}_+^d$ is a single chart manifold with a non-Euclidean structure. 
To compute the pullback function $\hat f_x = f \circ \retr_x$ on the unit simplex, we introduce the exponential map on the Shahshahani manifold as the retraction. Given a point $x \in\Delta^{d-1}$ and a vector $s \in T_x\Delta^{d-1}$, the exponential map is
\[
\Exp_x(s)=\left(\frac{x_1e^{s_1}}{\sum_jx_je^{s_j}},...,\frac{x_de^{s_d}}{\sum_jx_je^{s_j}}\right)^{\top} \in \mathbb{R}^d.
\]


\end{document}